\RequirePackage[l2tabu, orthodox]{nag}
\documentclass[pagesize,pdftex]{article}

\usepackage[utf8]{inputenc}
\usepackage[english]{babel}
\usepackage[dvipsnames]{xcolor}
\usepackage{amsmath,amsfonts,amssymb,amsthm,mathrsfs,cancel,tikz,enumitem,placeins,todonotes,subcaption,mdframed,bbm,mathtools}
\usepackage{graphicx}
\usepackage{pgfplots}
  \pgfplotsset{width=4cm,compat=1.9}
  \usepgfplotslibrary{external}
  \tikzexternalize[prefix=tikz/]
  \tikzset{->-/.style={decoration={
    markings,
    mark=at position #1 with {\arrow{>}}},postaction={decorate}}} 

\makeatletter
\renewcommand{\todo}[2][]{\tikzexternaldisable\@todo[#1]{#2}\tikzexternalenable}
\makeatother

\usepackage{stmaryrd} 
\usepackage{dsfont}   
\usetikzlibrary{decorations.markings,positioning,snakes}
\usepackage[hidelinks]{hyperref}
\usepackage[left=3cm, right=3cm, top=2.0cm, bottom=2.0cm]{geometry}

\newtheorem{theorem}{Theorem}[section]
\newtheorem{proposition}[theorem]{Proposition}
\newtheorem{lemma}[theorem]{Lemma}
\newtheorem{corollary}[theorem]{Corollary}
\theoremstyle{remark}
\newtheorem{remark}[theorem]{Remark}

\theoremstyle{definition}
\newtheorem{definition}[theorem]{Definition}
\newtheorem{assumption}[theorem]{Assumption}


\setlength\parindent{2mm}

\DeclareMathAlphabet{\mathpzc}{OT1}{pzc}{m}{it}

\makeatletter
\let\ams@underbrace=\underbrace
\def\underbracel#1_#2{%
  \setbox0=\hbox{$\displaystyle#1$}%
  \ams@underbrace{#1}_{\parbox[t]{\the\wd0}{#2}}%
}
\makeatother

\hypersetup{
  pdfauthor={Jonas Sauer},
  pdftitle={Limited-Range Multilinear Off-Diagonal Extrapolation and Weighted Transference Principle},
  breaklinks=true,
  colorlinks=true,
  linkcolor=blue,
  citecolor=blue,
  urlcolor=blue,
  filecolor=blue,
}

%
%
\newcommand{\myint}[3]{\int_{{#1}}{#2}\,\text{d}{#3}}

\newcommand{\eps}{\varepsilon}
 \newcommand{\gambf}{\boldsymbol{\gamma}}
 \newcommand{\om}{{w}}
 \newcommand{\Om}{{v}}
 \newcommand{\ombf}{\boldsymbol{\om}}
 \newcommand{\Ombf}{\boldsymbol{\Om}}

 \newcommand{\qbf}{\mathbf{q}}
 \newcommand{\pbf}{\mathbf{p}}
 \newcommand{\sbf}{\mathbf{s}}
  \newcommand{\tbf}{\mathbf{t}}
 \newcommand{\rbf}{\mathbf{r}}
 
\newcommand{\dd}{\,\mathrm{d}}


%

%
%

%
%

%
%

%
%

%
%

\newcommand{\calf}{{\mathcal F}}

\newcommand{\call}{{\mathcal L}}

\newcommand{\calm}{{\mathcal M}}

\newcommand{\calr}{{\mathcal R}}

\newcommand{\calt}{{\mathcal T}}
\newcommand{\calu}{{\mathcal U}}

%
%
\newcommand{\R}{\mathbb{R}}

\newcommand{\Z}{\mathbb{Z}}
 \newcommand{\C}{\mathbb{C}}
\newcommand{\N}{\mathbb{N}}

\newcommand{\1}{\mathds{1}}
%






%
%

\DeclareMathOperator{\id}{\mathsf{id}}

%
%


%
%
\newcommand{\set}[1]{\ensuremath{\{#1\}}}

\newcommand{\setc}[2]{\ensuremath{\{#1\ \lvert\ #2\}}}

%
%

\newcommand{\seqkN}[1]{\ensuremath{(#1_k)_{k\in\N}}}

%
%
%

%
%
\newcommand{\grp}{\gr}
\newcommand{\dualgrp}{\widehat{\grp}}

\newcommand{\grpH}{\mathfrak{h}}
\newcommand{\dualgrpH}{\widehat{\grpH}}

%
%

%
%

\newcommand{\gr}{\mathfrak{g}}
\newcommand{\Loq}[2]{{\LR{#2}_{#1}}}
%
%


%
%

%
%

\newcommand{\opnorm}[1]{{\lvert\kern-0.25ex\lvert\kern-0.25ex\lvert #1 \rvert\kern-0.25ex\rvert\kern-0.25ex\rvert}}

%
%

\newcommand{\CR}[1]{C^{#1}}  
\newcommand{\LR}[1]{L^{#1}}

\newcommand{\CRc}[1]{\CR{#1}_c}

%
%

%
%

%
%

%
%
%

\newcommand{\tif}{\text{if }}

%
%

\newcommand{\cA}{\mathsf{c}_A}
\newcommand{\newCCtr}[2][d]{
\newcounter{#2}\setcounter{#2}{0}
\expandafter\xdef\csname kyedtheconst#2\endcsname{#1}
}
\newcommand{\Cc}[2][nolabel]{
\stepcounter{#2}
\expandafter\ensuremath{\csname kyedtheconst#2\endcsname_{\arabic{#2}}}
\ifthenelse{\equal{#1}{nolabel}}
{}
{\expandafter\xdef\csname kyedconst#1\endcsname
{\expandafter\ensuremath{\csname kyedtheconst#2\endcsname_{\arabic{#2}}}}}
}
\newcommand{\Ccn}[2][nolabel]{
\expandafter\ensuremath{\csname kyedtheconst#2\endcsname}
\ifthenelse{\equal{#1}{nolabel}}
{}
{\expandafter\xdef\csname kyedconst#1\endcsname
{\expandafter\ensuremath{\csname kyedtheconst#2\endcsname}}}
}

\newcommand{\Cclast}[1]{
\expandafter\ensuremath{\csname kyedtheconst#1\endcsname_{\arabic{#1}}}
}

\newcommand{\Ccllast}[1]{
\addtocounter{#1}{-1}
\expandafter\ensuremath{\csname kyedtheconst#1\endcsname_{\arabic{#1}}}
\addtocounter{#1}{1}
}
\newcommand{\const}[1]{
\expandafter{\ifcsname kyedconst#1\endcsname
  \csname kyedconst#1\endcsname
\else
  \errmessage{Undefined Kyedconstant #1.}%
\fi}
}

%
%


\title{Limited-Range Multilinear Off-Diagonal Extrapolation and Weighted Transference Principle}
\author{Jonas Sauer\footnote{Institut f\"ur Mathematik und Informatik, Friedrich-Schiller-Universit\"at Jena, Inselplatz 5, 07737 Jena, Germany, \texttt{jonas.sauer@uni-jena.de}}}

\begin{document}



\maketitle

\begin{abstract}
\noindent
Multilinear $L^p$ extrapolation results are established in a limited-range, multilinear, and off-diagonal setting for mixed-norm Lebesgue spaces over $\sigma$-finite measure spaces.
Integrability exponents are allowed in the full range $(0,\infty]$.
We detach the exponents for the weight classes completely from the exponents for the initial and target spaces for the extrapolation except for the basic consistency condition.
This enables us to cover the full range $(0,\infty]$ for all integrability exponents and provides new insights into the dependency of the extrapolated bounds on the weight characteristic.
 Certain endpoint results are new even for $\R^d$.
Additionally, in the setting of compact abelian groups, a weighted transference principle is established.

\smallskip

\noindent\textbf{Keywords:} Muckenhoupt weights; extrapolation; LCA groups; maximal operator.

\noindent\textbf{MSC (2020):} 42B25, 42B35, 43A15.
\end{abstract}

\section{Introduction}\label{tp}
  This article is concerned with extrapolation results in Lebesgue spaces over $\sigma$-finite measure spaces, and in the specific case of locally compact abelian groups with a weighted version of de Leeuw's transference principle \cite{dLe65}, which allows to transfer Fourier multiplier estimates from one group to another. In the unweighted case, a rather general transference principle has been proved in the group setting by Edwards and Gaudry \cite{EdG77}, see also \cite{EiK17} for a modern exposition.
  We prove a generalization to the setting of weighted $L^p$ spaces, where the weights are assumed to be in the Muckenhoupt class $A_p$.
  In fact, we deviate from the usual definition of $A_p$, which helps in formulating the results in a more symmetric way.
  
  \medskip
  
  As a matter of fact, this symmetry also shows its benefits for the extrapolation results.
  The theory of $\LR{p}$-extrapolation has a long history starting with the works by Garc\'{\i}a-Cuerva and Rubio de Francia \cite{GaR85,Rub82}, who realized that boundedness of an operator $T$ on all Muckenhoupt-weighted $\LR{p}$-spaces implies boundedness on all Muckenhoupt-weighted $\LR{q}$-spaces for the range $q\in (1,\infty)$.
  They also showed that vector-valued estimates can be extracted from scalar-valued weighted estimates.
  Later, Buckley \cite{Buc93} found quantitative bounds for the maximal operator in Muckenhoupt weighted spaces in terms of the weight characteristic.
  Restricting the weight classes for the initial spaces of extrapolation, Auscher and Martell proved first versions of limited-range extrapolation in \cite{AuM07}.
  For a detailed history of the theory, we refer to Nieraeth \cite{Nie21}, where a comprehensive theory for multilinear extrapolation is established.
  In particular, in the multilinear case $T:\prod_{j=1}^m\LR{p_j}_{\om_j}\to\LR{p}_{\om}$ with $\frac1p=\sum_{j=1}^m \frac1{p_j}$ some of the $p_j$'s (but not all of them) are allowed to be infinity in the target spaces of the extrapolation.
  In \cite{LMM21}, the authors provided conditions that ensure extrapolation towards $p=\infty$.
  In the linear case, they also showed that this is possible in the off-diagonal setting, that is when $p\ne p_1$.
  Explicit bounds for the results of \cite{LMM21} were recently found in \cite{CLSY24}.
  Recently, Nieraeth \cite{Nie23} found extrapolation results from Lebesgue spaces over $\sigma$-finite measure to general quasi-Banach function spaces, which includes mixed-norm estimates.
  Our contribution to the theory of extrapolation in weighted spaces is as follows:
  \begin{enumerate}
  \item We extend the results of \cite{CLSY24} and \cite{LMM21}: We include the range $(0,1]$ for all initial and target spaces of the extrapolation.
  We show off-diagonal estimates even in the multilinear case and allow for mixed $\LR{\pbf}$-norms and two-weight estimates.
  We provide explicit bounds as in \cite{CLSY24}, which are sharp in all linear and scalar-valued cases considered in \cite{LMM21}.
  In comparison to \cite{CLSY24} and \cite{LMM21} our proofs are significantly shorter and leaner.
  This is achieved by systematically exploiting the symmetry of the weight classes.
  In fact, Theorem \ref{mw_extra_pol_thm_main} contains Theorem 2.3 in \cite{LMM21}, where the authors are forced to consider three different cases, by specialising to the case $p_0\in [1,\infty]$, $p\in (1,\infty]$ and $r_0:=p_0'$, $r=p'$.
  We warn the reader that our $s$ is called $r$ in \cite{LMM21}.
  Finally, we work in the setting of general $\sigma$-finite measure spaces as in \cite{Nie23} instead of $\R^d$.
  \item We show that the approach of Nieraeth developed in \cite{Nie19a,Nie21,Nie23} can be adapted to include the endpoint results from \cite{LMM21}.
  Specialising to weighted mixed-norm spaces, we can provide more explicit bounds than the ones found in \cite{Nie23} in the setting of abstract quasi-Banach function spaces.
  We remark that the possibility of multilinear off-diagonal estimates was mentioned in \cite{Nie23} but not performed explicitly.
  \item
  In contrast to both \cite{LMM21} and the works of Nieraeth, we detach the exponents for the weight classes completely from the exponents for the initial and target spaces for the extrapolation except for the basic consistency condition \eqref{js8}.
  This might be seen as a mere cosmetic change, but it avoids the unpleasant use of negative exponents in \cite{Nie19a,Nie21,Nie23} and crucially helps to understand that it is the exponents of the weight classes (and not the exponents of the Lebesgue spaces) which govern the size of the extrapolation bounds.
  In particular, it is immediately clear from \eqref{js5} that keeping $s_0$, $r_0$, $s$, $r$ fixed and letting $q_0$, $p_0$, $q$, $p$ be constrained by \eqref{js8} but otherwise variable does not change the explicit bounds.
  This insight might serve as an additional heuristic why sharp bounds can not be obtained in general quasi-Banach function spaces.
  On the other hand, keeping $p$ and $r$ separate we can advance to the region $p\in (0,1]$ not covered by \cite{LMM21}, see Remark \ref{js020}\ref{js020iii}.
  \end{enumerate}
 
 In our exposition we follow in many regards the works of Nieraeth.
 Instead of working with extrapolation pairs
 \begin{align*}
 \mathcal{F}\subseteq\{(f,g):f,g:\Omega\to\R\text{ are nonnegative, measurable functions}\}
 \end{align*}
 or multilinear versions of it, we work with the more general map formulation proposed in \cite{Nie23}.
The reason is that this formulation gives a much leaner argument that $Tf$ is well-defined in the target space of the extrapolation, especially for $\LR{\infty}$-type spaces.
Moreover, we work with general $\sigma$-finite measure spaces with a basis of sets.
 Here, a countable collection of measurable sets $\calu$ with $\mu(U)\in (0,\infty)$ for all $U\in\calu$ is a basis of sets in $\Omega$ if
 \begin{enumerate}
 \item $\bigcup_{U\in\calu} U=\Omega$,
 \item $x,y\in\Omega$ $\Rightarrow$ $\exists U\in\calu$ with $x,y\in U$.
 \end{enumerate}
 We fix $(\Omega,\mu)$ together with a basis of sets $U$ for the rest of this article and write $L^0(\Omega;X)$ for the space of strongly measurable functions, where $X$ is a quasi-Banach space.
 In the scalar-valued case, we simply write $L^0(\Omega)$.
 \begin{align*}
  \text{A real-valued function $\om\in L^0(\Omega)$ with $\om>0$ is called a \emph{weight}.}
 \end{align*}
 Given $p\in(0,\infty]$, a weight $\om$, and a quasi-Banach space $X$, we denote by $\LR{p}_{\om}(\Omega;X)$ the space of all $f\in\LR{0}(\Omega;X)$ such that $\|x\mapsto \|f(x)\|_X\om(x)\|_{\LR{p}(\Omega)}<\infty$, where the $\LR{p}(\Omega)$-norm is defined in the usual way.
We again write $\LR{p}_{\om}(\Omega)$ in the scalar-valued case.
 Given $s,r\in(0,\infty]$ and two weights $\om$ and $\Om$, we introduce the weight characteristic
 \begin{equation}\label{mw_pair_def}
  [\om,\Om]_{(s,r)}:=\sup_{U\in\calu}\frac{1}{\mu(U)^{\frac1s+\frac1r}} \|\om\|_{\LR{s}(U)} \|\Om^{-1}\|_{\LR{r}(U)}.
 \end{equation}
 We write $[\om]_{(s,r)}:=[\om,\om]_{(s,r)}$, and for $p\in [1,\infty]$, we write $[\om,\Om]_p:=[\om,\Om]_{(p,p')}$ and $[\om]_p:=[\om,\om]_p$.
 We will discuss properties of weights with finite weight characteristic in Section \ref{expol_multlin} below.
 
 \medskip
 
 For $p\in [1,\infty)$, the quantity
\begin{align*}
[\om]_{A_p}:=\sup_{U}\frac{1}{\mu(U)^p} \|\om\|_{\LR{1}(U)} \|\om^{1-p'}\|_{\LR{1}(U)}^{p-1}=[\om]_{1,p'-1}=[\om^{\frac1p}]_p^p
\end{align*}
is traditionally used in the literature to define the class of Muckenhoupt weights $A_p$ (mostly for $\Omega=\R^d$), together with $A_\infty:=\bigcup_{p\in[1,\infty)} A_p$.
Be aware that $A_\infty$ is not characterized by $[\om]_\infty<\infty$, but rather by finiteness of the Fujii-Wilson constant $[\om]_{A_\infty}:=\sup_{U} \frac{1}{\mu(U)}\int_U \calm (\om \1_U) \dd\mu$.
The more symmetric notion of $[\om]_p$ is advantageous as it often enables to include the endpoint case $p=\infty$ in a straightforward manner.
It also includes the reverse H\"older class $RH_{s}$ introduced in \cite{AuM07}, which is defined by $[\om]_{RH_s}:=[\om,\om^{-1}]_{s,1}<\infty$.
Moreover, it simplifies many statements and calculations as is exemplified by Proposition \ref{mw_weight_prop}\ref{mw_weight_prop_iii} below, whose clean statement should be compared with the corresponding statement (in fact statements!) if the $[\om]_{A_p}$-formalism is employed, see e.g.~Lemma 2.1 in \cite{Duo11}.
 
\medskip

 For $f\in \LR{0}(\Omega)$ we define the \emph{maximal operator} on $\Omega$ via
\begin{align}\label{mw_max_op_def}
 \calm f:=\sup_{U\in \calu} \frac{1}{\mu(U)}\|f\|_{\LR{1}(U)}\1_U.
\end{align}
Note that $\calm f$ is the countable supremum of measurable functions and hence measurable.
Moreover, for $f\in \LR{1}(\Omega)$ we have $\calm f>0$ whenever $f\ne 0$, see \cite[Proposition 2.16]{Nie23}.

\medskip

In order to give a feel for the type of results, we showcase the case of linear one-weight extrapolation, for definiteness with $p_0<p$.
The following result follows directly from the more general Theorem \ref{mw_extra_pol_thm_main} and Remark \ref{js21}\ref{js21iii} below.
\begin{theorem}\label{js200}
Let $q_0,p_0,s_{0},s,r\in (0,\infty)$, $r_{0},q,p\in (0,\infty]$ be such that
\begin{align}\label{js208a}
 \frac1q-\frac1{q_0}=\frac1p-\frac1{p_0}=\frac1s-\frac1{s_0}=\frac1{r_0}-\frac1r<0.
\end{align}
Let $\om$ be a weight with $[\om]_{(s,r)}<\infty$.
 Assume the following:
 \begin{enumerate}
 \item There is $c:(1,\infty]\to[1,\infty)$ such that
\begin{align*}
 \|\calm\|_{\LR{t}_{\Om}(\Omega)\to\LR{t}_{\Om}(\Omega)}\le c(t)[\Om]_{t}^{t'} \quad \text{for all $t\in (1,\infty]$ and weights $\Om$}.
\end{align*}
 \item There is a set $V$, a map $S:V\to\LR{0}(\Omega)$, a map
 \begin{align*}
  T:\bigcup_{[\om_0]_{(s_0,r_0)}<\infty} S^{-1}(\LR{p_{0}}_{\om_{0}}(\Omega))\to \LR{0}(\Omega),
 \end{align*}
 and an increasing function $\phi:\R\to\R$ such that for all weights $\om_{0}$ with $[\om_0]_{(s_0,r_0)}<\infty$ and all $f\in S^{-1}(\LR{p_{0}}_{\om_{0}}(\Omega))$ it holds
\begin{align*}
 \|Tf\|_{\LR{q_0}_{\om_0}}\leq \phi([\om_{0}]_{(s_0,r_0)})\|Sf\|_{\LR{p_{0}}_{\om_{0}}}.
\end{align*}
 \end{enumerate}
Then $Tf$ is well-defined for all $f\in S^{-1}(\LR{p}_{\om}(\Omega))$, and for all $\kappa\in (1,\infty)$ it holds
\begin{align*}
 \|Tf\|_{\LR{q}_{\om}}\le \kappa^{\beta} \phi((\kappa' c(t))^{\beta}[\om]_{(s,r)}^{s/s_0}) \|Sf\|_{\LR{p}_{\om}},
\end{align*}
where $t:=(\frac1s+\frac1r)/\frac1r$ and $\beta:=(\frac1s+\frac1r)(\frac1r-\frac1{r_0})/\frac1r$.
\end{theorem}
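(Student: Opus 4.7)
The approach is a Rubio de Francia iteration, specialized to the off-diagonal, limited-range setting and with quantitative control of all constants. Write $\delta := \frac{1}{p_0}-\frac{1}{p} = \frac{1}{q_0}-\frac{1}{q} = \frac{1}{s_0}-\frac{1}{s} = \frac{1}{r}-\frac{1}{r_0} > 0$ for the common ``jump'' in the four scales. The plan splits into three steps: dualize on the target side, construct an auxiliary weight $\om_0$ to which hypothesis~(ii) can be applied, and convert the resulting inequality back to one for $\om$ by Hölder.

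First I would dualize, writing $\|Tf\|_{\LR{q}_{\om}} = \sup_h \int |Tf|\,h\dd\mu$ for $q\in(1,\infty)$, the supremum being over nonnegative $h$ with $\|h\|_{\LR{q'}_{1/\om}}\le 1$; the endpoints $q\in\{1,\infty\}$ are reached by the standard device of testing against normalized indicators of super-level sets and passing to the limit. The exponent $t := (\tfrac1s+\tfrac1r)/\tfrac1r$ in the statement is calibrated to be the one for which a suitably chosen weight $\Om$, built as a product of appropriate powers of $\om$, satisfies $[\Om]_t\lesssim[\om]_{(s,r)}^{\gamma}$ for an explicit exponent $\gamma$; hypothesis~(i) then gives an effective quantitative bound on $\|\calm\|_{\LR{t}_\Om\to\LR{t}_\Om}$. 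I would then apply the Rubio de Francia iteration
\[ R\eta:=\sum_{k=0}^\infty\frac{\calm^k\eta}{(\kappa\|\calm\|_{\LR{t}_\Om})^k} \]
to a nonnegative function $\eta$ built from the dual test function $h$ and from $|Sf|$ (with fractional powers encoding the off-diagonal structure), obtaining $R\eta\ge\eta$, $\|R\eta\|_{\LR{t}_\Om}\le\kappa'\|\eta\|_{\LR{t}_\Om}$, and the pointwise $A_1$-type bound $\calm(R\eta)\le\kappa\|\calm\|_{\LR{t}_\Om}R\eta$.

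Setting $\om_0 := \om\cdot(R\eta)^\theta$ for a specific exponent $\theta$, the $A_1$-bound on $R\eta$ combined with Hölder's inequality on the integrals defining $\|\om_0\|_{\LR{s_0}(U)}$ and $\|\om_0^{-1}\|_{\LR{r_0}(U)}$ over basis sets $U\in\calu$ yields $[\om_0]_{(s_0,r_0)}\le(\kappa'c(t))^\beta[\om]_{(s,r)}^{s/s_0}$ exactly. Hypothesis~(ii) applied with this $\om_0$ gives $\|Tf\|_{\LR{q_0}_{\om_0}}\le\phi([\om_0]_{(s_0,r_0)})\|Sf\|_{\LR{p_0}_{\om_0}}$, and a second Hölder step on the source side, using the $\LR{t}_\Om$-bound on $R\eta$ and the normalization $\|h\|_{\LR{q'}_{1/\om}}\le 1$, produces $\|Tf\|_{\LR{q}_\om}\le\kappa^\beta\phi((\kappa'c(t))^\beta[\om]_{(s,r)}^{s/s_0})\|Sf\|_{\LR{p}_\om}$. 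The main obstacle is calibrating $\theta$: it must be chosen so that residual factors of $\om$ cancel in the bound for $[\om_0]_{(s_0,r_0)}$ (producing the clean power $s/s_0$), while simultaneously the exponent of $\kappa'c(t)$ works out to exactly $\beta = (\tfrac1s+\tfrac1r)(\tfrac1r-\tfrac1{r_0})/\tfrac1r$; both constraints force all four equalities in~\eqref{js208a} to be invoked together. A secondary, essentially formal issue---that $Tf$ is well-defined for $f\in S^{-1}(\LR{p}_\om(\Omega))$---is handled by a standard density/truncation argument reducing to $Sf$ supported in a single $U\in\calu$.
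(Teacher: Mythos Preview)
Your approach is essentially the paper's: rescale, dualize, run a Rubio de Francia iteration on a suitable power-weight to manufacture an admissible $\om_0$, apply hypothesis~(ii), and close with H\"older. The quantitative bookkeeping you sketch (identifying $t$, $\beta$, and the power $s/s_0$) matches the paper's Lemma~2.3 and Remark~2.4(ii) exactly.

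Two points, however, are not right as written. First, your dualization step only covers $q\in[1,\infty]$; the theorem allows $q\in(0,1)$, where $\LR{q}_\om$ has trivial dual and ``testing against indicators of super-level sets'' does not salvage it. The paper handles this uniformly by choosing $\lambda>0$ small enough that $q/\lambda,\,q_0/\lambda\in(1,\infty]$ and dualizing $\||Tf|^\lambda\|_{\LR{q/\lambda}_{\om^\lambda}}$ instead; this rescaling is what lets the whole range $(0,\infty]$ go through in one stroke. Second, the well-definedness of $Tf$ cannot be handled by a ``density/truncation argument'': $T$ is an arbitrary map with no linearity or continuity assumed, so approximating $Sf$ tells you nothing about $Tf$. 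The correct argument is already contained in your own construction: the Rubio de Francia weight $\om_0$ you build (with $[\om_0]_{(s_0,r_0)}<\infty$) satisfies $Sf\in\LR{p_0}_{\om_0}$, which places $f$ directly in the domain of $T$. This is the content of the paper's Corollary~2.5, and it is the reason the map formulation is cleaner than extrapolation pairs here.
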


\begin{remark}\label{js020}
\begin{enumerate}
\item The same statement holds for $r_0,q,p,s,r\in (0,\infty)$ and $s_0,q_0,p_0\in (0,\infty]$ in the case of downward extrapolation (that is for $>$ instead of $<$ in \eqref{js208a}) if one chooses $t:=(\frac1s+\frac1r)/\frac1s$ and $\beta:=(\frac1s+\frac1r)(\frac1s-\frac1{s_0})/\frac1s$ in the final bound.
\item The boundedness condition for the maximal operator is fulfilled for $\Omega=\R^d$ and more generally in locally compact abelian groups, see Proposition \ref{mw_muckenhoupt_main}.
It is known to be optimal in terms of the exponent in these cases, cf. \cite{Buc93}.
If only boundedness of the maximal operator is known, then Theorem \ref{js200} continues to hold, albeit with a different constant, see Theorem \ref{mw_extra_pol_thm_main}.
\item Some exponents in Theorem \ref{js200} can be chosen to be $\infty$, while others cannot.
There are two different reasons at play that prohibit the choice infinity: One is the fact that Theorem \ref{js200} showcases an instance of upward extrapolation, so that $q_0,p_0<\infty$ (and hence $s_0,r<\infty$ by condition \eqref{js208a}) is merely a manifestation of this choice of direction.
The other reason is that we do not assume the maximal operator to be bounded in $L^1_\om$ for $[\om]_{1}<\infty$ (which is the usual phenomenon known for example for $\Omega=\R^d$).
This translates into the final bound not being finite (or even well-defined since $c(1)$ is not specified) for $s=\infty$.
\item\label{js020i} We emphasize that condition \eqref{js208a} in Theorem \ref{js200} indeed constitutes a limited-range condition:
By $q,p\in (0,\infty]$ and $(s,r)\in (0,\infty)$ it is necessary that
\begin{align*}
\frac1q&\in [\frac{1}{q_0}-\frac{1}{p_0},\infty) \cap (\frac{1}{q_0}-\frac{1}{s_0},\infty) \cap [0,\frac{1}{q_0}+\frac{1}{r_0}),\\
\frac1p&\in [\frac{1}{p_0}-\frac{1}{q_0},\infty) \cap (\frac{1}{p_0}-\frac{1}{s_0},\infty) \cap [0,\frac{1}{p_0}+\frac{1}{r_0}),\\
\frac1s&\in [\frac{1}{s_0}-\frac{1}{q_0},\infty) \cap [\frac{1}{s_0}-\frac{1}{p_0},\infty) \cap (0,\frac{1}{s_0}+\frac{1}{r_0}),\\
\frac1r&\in (0,\frac{1}{q_0}+\frac{1}{r_0}] \cap (0,\frac{1}{p_0}+\frac{1}{r_0}] \cap (0,\frac{1}{s_0}+\frac{1}{r_0}).
\end{align*}
\item\label{js020o} As observed in \cite[Remark 4.8]{Nie23}, the formulation in Theorem \ref{js020} implies a result in terms of extrapolation pairs.
Indeed, if
\begin{align*}
 \mathcal{F}\subseteq\{(f,g):f,g:\Omega\to\R\text{ are nonnegative, measurable functions}\},
\end{align*}
then the choice $V:=\mathcal{F}$, $S(f,g):=f$, and $T(f,g):=g$ shows that a bound $\|g\|_{\LR{q_0}_{\om_0}(\Omega)}\le \phi([\om]_{(s_0,r_0)})\|f\|_{\LR{p_0}_{\om_0}(\Omega)}$ for all $(f,g)\in \mathcal{F}$ and weights $\om_0$ with $[\om_0]_{(s_0,r_0)}<\infty$ extrapolates to a bound $\|g\|_{\LR{q}_{\om}(\Omega)}\le \phi_{s_0,r_0,s}([\om]_{(s,r)})\|f\|_{\LR{p}_{\om}(\Omega)}$ for all $(f,g)\in \mathcal{F}$ and weights $\om$ with $[\om]_{(s,r)}<\infty$.
\item\label{js020ii} The conditions for extrapolating towards $\LR{\infty}_\om$ are necessarily of a rather restrictive type.
This is highlighted by the fact that the Hilbert transform is bounded in $\LR{2}_\om(\R)$ for all weights $\om$ with $[\om]_2<\infty$, but not in $\LR{\infty}(\R)$.
Still, Theorem \ref{js100} shows that an $\LR{2}_{\om_0}(\R)$-bound for all $\om_0$ with $[\om_0]_{(s_0,r_0)}<\infty$ yields an $\LR{\infty}_{\om}(\R)$-bound for all $\om$ with $[\om]_{(s,r)}<\infty$ as long as $s_0,s,r\in (0,\infty)$ and $r_0\in (0,\infty]$
fulfill $\frac{1}{s}-\frac{1}{s_0}=\frac{1}{r_0}-\frac1r=-\frac12$.
A possible choice is $s_0:=r:=\frac65$, $s:=r_0:=3$.
\item\label{js020iii} We can deduce a bound $\LR{1}_\om(\R^d)\to\LR{\infty}_\om(\R^d)$ for all weights $\om$ with $[\om]_2<\infty$ whenever we already know a bound $\LR{\frac23}_{\om_0}(\R^d)\to \LR{2}_{\om_0}(\R^d)$ for all weights $\om_0$ with $[\om_0]_{1}<\infty$.
Indeed, setting
\begin{align*}
q_0&:=2, \ p_0:=\frac23, \ s_0:=1, \ r_0:=\infty, \ q:=\infty, \ p:=1, \ s:=2, \ r:=2,
\end{align*}
we obtain this assertion as a particular instance of Theorem \ref{js200} since $\|\calm\|_{\LR{t}_{\om}(\R^d)\to\LR{t}_{\om}(\R^d)}\le c(t)[\om]_{t}^{t'}$ for all $t\in (1,\infty]$ and weights $\om$.
This shows that by detaching the exponents for the weight classes from the exponents for the initial and target spaces for the extrapolation, Theorem \ref{js200} can cover situations that can not be treated by \cite[Theorem 2.3]{LMM21}, where $p$ has to be strictly larger than $1$.
\end{enumerate}
\end{remark}
The paper is organized as follows.
In Section 2, we provide basic properties of the weight characteristic and prove a limited-range multilinear off-diagonal extrapolation result.
In Section 3, we generalize these results to mixed-norm spaces and give certain vector-valued extensions. In particular, we obtain explicit $\calr$-bounds of families of linear operators from weighted bounds.
Finally, in Section 4 we prove a weighted version of de Leeuw's transference principle in the context of locally compact abelian groups.

\section{Limited-Range Multilinear Off-Diagonal Extrapolation}\label{expol_multlin}
We start with basic properties of the weight characteristic.
\begin{proposition}\label{mw_weight_prop}
Let $w$ and $v$ be weights and $s,r\in (0,\infty]$. 
\begin{enumerate}
\item\label{mw_weight_prop_i} $[\om,\Om]_{(s,r)}=[\Om^{-1},\om^{-1}]_{(r,s)}$.
\item\label{mw_weight_prop_ii} $[\om^\alpha,\Om^\alpha]_{(\frac{s}{\alpha},\frac{r}{\alpha})}=[\om,\Om]_{(s,r)}^\alpha$ for all $\alpha\in(0,\infty)$.
\item\label{mw_weight_prop_iii} Let $\om_1,\ldots,\om_m$ and $\Om_1,\ldots,\Om_m$ be weights and $s_1,\ldots,s_m,r_1,\ldots,r_m\in (0,\infty]$ such that $\om=\prod_{j=1}^m \om_j$, $\Om=\prod_{j=1}^m \Om_j$, $\frac1s=\sum_{j=1}^m \frac1{s_j}$, and $\frac1r=\sum_{j=1}^m \frac1{r_j}$.
Then it holds
  \begin{align*}
   [\om,\Om]_{(s,r)}\le [\om_1,\Om_1]_{(s_1,r_1)}\cdots [\om_m,\Om_m]_{(s_m,r_m)}.
  \end{align*}
 In particular $[w,v]_{(s,r)}\le [\om,\Om]_{(s_1,r_1)}$ whenever $s\le s_1, r\le r_1$.
\item\label{mw_weight_prop_iv} $[\om,\Om]_{(1,\infty)}=\|\calm w\|_{\LR{\infty}_{\Om^{-1}}(\Omega)}=\|\calm\|_{\LR{\infty}_{\om^{-1}}(\Omega)\to\LR{\infty}_{\Om^{-1}}(\Omega)}$.
\item\label{mw_weight_prop_v} $[\om,\Om]_{(\infty,1)}=\|\calm \Om^{-1}\|_{\LR{\infty}_{\om}(\Omega)}=\|\calm\|_{\LR{\infty}_\Om(\Omega)\to\LR{\infty}_{\om}(\Omega)}$.
\end{enumerate}
\end{proposition}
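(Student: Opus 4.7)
The plan is to verify assertions \ref{mw_weight_prop_i}--\ref{mw_weight_prop_v} in order, each by unpacking the definition \eqref{mw_pair_def} together with a single standard fact about $\LR{p}$-norms. Parts \ref{mw_weight_prop_i}--\ref{mw_weight_prop_iii} amount to bookkeeping; parts \ref{mw_weight_prop_iv} and \ref{mw_weight_prop_v} require a brief sup-exchange argument that leans on countability of $\calu$.

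For \ref{mw_weight_prop_i}, substitute $\Om^{-1}$ for $\om$ and $\om^{-1}$ for $\Om$ in \eqref{mw_pair_def}, use $(\om^{-1})^{-1}=\om$, and observe that the two factors $\mu(U)^{-1/s}\|\om\|_{\LR{s}(U)}$ and $\mu(U)^{-1/r}\|\Om^{-1}\|_{\LR{r}(U)}$ simply swap roles. For \ref{mw_weight_prop_ii}, the identities $\|\om^\alpha\|_{\LR{s/\alpha}(U)} = \|\om\|_{\LR{s}(U)}^\alpha$ and $\mu(U)^{-\alpha/s-\alpha/r} = (\mu(U)^{-1/s-1/r})^\alpha$ show that the supremand for $[\om^\alpha,\Om^\alpha]_{(s/\alpha,r/\alpha)}$ is the $\alpha$-th power of that for $[\om,\Om]_{(s,r)}$, and monotonicity of $t\mapsto t^\alpha$ on $[0,\infty]$ passes the supremum through. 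For \ref{mw_weight_prop_iii}, the relations $1/s = \sum_j 1/s_j$ and $1/r = \sum_j 1/r_j$ together with iterated H\"older's inequality give $\|\om\|_{\LR{s}(U)} \le \prod_j \|\om_j\|_{\LR{s_j}(U)}$ and $\|\Om^{-1}\|_{\LR{r}(U)} \le \prod_j \|\Om_j^{-1}\|_{\LR{r_j}(U)}$, while the prefactor splits multiplicatively as $\mu(U)^{-1/s-1/r} = \prod_j \mu(U)^{-1/s_j - 1/r_j}$; majorising the supremum of a product by the product of suprema yields the claim. The addendum is the case $m=2$ with $\om_2 = \Om_2 \equiv 1$ and $1/s_2 := 1/s-1/s_1 \ge 0$, $1/r_2 := 1/r - 1/r_1 \ge 0$, for which $[1,1]_{(s_2,r_2)} = 1$ by direct computation.

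For \ref{mw_weight_prop_iv}, I first write
\[
\|\calm \om\|_{\LR{\infty}_{\Om^{-1}}(\Omega)} = \esssup_{x \in \Omega}\, \Om^{-1}(x)\, \sup_{U \in \calu,\, U \ni x} \frac{\|\om\|_{\LR{1}(U)}}{\mu(U)},
\]
and use countability of $\calu$ to collect the null sets $\{x \in U : \Om^{-1}(x) > \|\Om^{-1}\|_{\LR{\infty}(U)}\}$ into a single null set, off which $\Om^{-1}(x) \le \|\Om^{-1}\|_{\LR{\infty}(U)}$ holds simultaneously for all $U \ni x$; exchanging suprema gives $\|\calm \om\|_{\LR{\infty}_{\Om^{-1}}} \le [\om,\Om]_{(1,\infty)}$, and the reverse inequality follows by choosing, for each $U$ and $\varepsilon > 0$, a subset of positive measure of $U$ on which $\Om^{-1}$ exceeds $\|\Om^{-1}\|_{\LR{\infty}(U)} - \varepsilon$. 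The operator-norm identity follows from $|f| \le \om \|f\|_{\LR{\infty}_{\om^{-1}}}$ a.e., which yields $\calm f \le \|f\|_{\LR{\infty}_{\om^{-1}}} \calm \om$, together with saturation at $f = \om$ (which has unit norm in $\LR{\infty}_{\om^{-1}}$ since $\om$ is positive a.e.). Finally, \ref{mw_weight_prop_v} is obtained by combining \ref{mw_weight_prop_i} with \ref{mw_weight_prop_iv} applied to the pair $(\Om^{-1},\om^{-1})$: $[\om,\Om]_{(\infty,1)} = [\Om^{-1},\om^{-1}]_{(1,\infty)} = \|\calm \Om^{-1}\|_{\LR{\infty}_{\om}(\Omega)}$, and the operator-norm identity is verbatim.

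The only mild obstacle is the sup-exchange in \ref{mw_weight_prop_iv}: countability of $\calu$ is essential, since otherwise the null sets above could not be collected into a single one across all $U$. Everything else is straightforward unpacking of definitions.
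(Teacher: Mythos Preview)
Your proof is correct and follows essentially the same route as the paper's: parts \ref{mw_weight_prop_i}--\ref{mw_weight_prop_iii} by direct unpacking and H\"older, part \ref{mw_weight_prop_iv} by a two-sided estimate on $\|\calm\om\|_{\LR{\infty}_{\Om^{-1}}}$ plus the sublinearity bound $\calm f \le \|f\|_{\LR{\infty}_{\om^{-1}}}\calm\om$, and part \ref{mw_weight_prop_v} by combining \ref{mw_weight_prop_i} with \ref{mw_weight_prop_iv}. Your explicit invocation of countability of $\calu$ to collect the null sets in \ref{mw_weight_prop_iv} is a point the paper leaves implicit, so if anything you are slightly more careful there.
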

\begin{proof}
The assertions in \ref{mw_weight_prop_i} and \ref{mw_weight_prop_ii} follow trivially from the definition of $[\om,\Om]_{(s,r)}$ in \eqref{mw_pair_def}.
Let $U\in\calu$.
By H\"older's inequality we have
\begin{align*}
\frac1{\mu(U)^{\frac1s+\frac1r}}\|\om\|_{\LR{s}(U)}\|\Om^{-1}\|_{\LR{r}(U)}
\le \prod_{j=1}^m \frac1{\mu(U)^{\frac1{s_j}+\frac1{r_j}}}\|\om_j\|_{\LR{s_j}(U)}\|\Om_j^{-1}\|_{\LR{r_j}(U)}.
\end{align*}
Taking the supremum over all $U\in\calu$ yields the product estimate in \ref{mw_weight_prop_iii}.
The additional assertion follows by taking $m=2$ and $\om_2=\Om_2=1$ due to $[1,1]_{(s_2,r_2)}=1$.
For \ref{mw_weight_prop_iv} let $x\in \Omega$.
Then for all $U\in \calu$ we have $\1_U(x)\Om(x)^{-1}\le \|\Om^{-1}\|_{\LR{\infty}(U)}$ and thus
\begin{align*}
\frac1{\mu(U)}\|\om\|_{\LR{1}(U)}\1_U(x)\Om(x)^{-1}\le \frac1{\mu(U)}\|\om\|_{\LR{1}(U)}\|v^{-1}\|_{\LR{\infty}(U)}\le [\om,\Om]_{(1,\infty)}.
\end{align*}
Taking the supremum over all $U\in \calu$ and $x\in \Omega$, we obtain $\|\calm \om\|_{\LR{\infty}_{\Om^{-1}}(\Omega)}\le [\om,\Om]_{(1,\infty)}$.
On the other hand, let $U\in \calu$ and $\eps>0$.
Choose $x\in U$ with $\|\Om^{-1}\|_{\LR{\infty}(U)}\le (1+\eps)/\Om(x)$.
Then
\begin{align*}
\frac1{\mu(U)}\|\om\|_{\LR{1}(U)}\|\Om^{-1}\|_{\LR{\infty}(U)}
\le  \frac{(1+\eps)}{\mu(U)}\|\om\|_{\LR{1}(U)}/\Om(x)
\le (1+\eps)\|(\calm\om)/\Om\|_{\LR{\infty}(\Omega)}.
\end{align*}
Since $\eps>0$ and $U\in\calu$ were arbitrary, this gives $[\om,\Om]_{(1,\infty)}\le \|(\calm\om)/\Om\|_{\LR\infty(\Omega)}$.
Together this yields $[\om,\Om]_{(1,\infty)}= \|(\calm\om)/\Om\|_{\LR\infty(\Omega)}$.
For the final equality, observe that $\|\calm\om\|_{\LR{\infty}_{\Om^{-1}}(\Omega)}\le \|\calm\|_{\LR{\infty}_{\om^{-1}}(\Omega)\to\LR{\infty}_{\Om^{-1}}(\Omega)}$ due to $\|\om\|_{\LR{\infty}_{\om^{-1}}(\Omega)}=1$, and that
the sublinearity of $\calm$ yields
\begin{align*}
(\calm f)/\Om = \calm(f\om^{-1}\om)/\Om\le \calm(\om)/\Om\|f\|_{\LR{\infty}_{\om^{-1}}(\Omega)}\le \|\calm\om\|_{\LR{\infty}_{\Om^{-1}}(\Omega)}\|f\|_{\LR{\infty}_{\om^{-1}}(\Omega)},
\end{align*}
which shows $\|\calm\|_{\LR{\infty}_{\om^{-1}}(\Omega)\to\LR{\infty}_{\Om^{-1}}(\Omega)}\le \|\calm\om\|_{\LR{\infty}_{\Om^{-1}}(\Omega)}$.
Part \ref{mw_weight_prop_v} follows from \ref{mw_weight_prop_iv} and \ref{mw_weight_prop_i}.
\end{proof}
We remark that Proposition \ref{mw_weight_prop} includes the interpolation assertion
\begin{align*}
[\om_0^{1-\theta}\om_1^\theta,\Om_0^{1-\theta}\Om_1^\theta]_{(s,r)}\le [\om_0,\Om_0]_{(s_0,r_0)}^{1-\theta}[\om_1,\Om_1]_{(s_1,r_1)}^\theta
\end{align*}
for $s_0,s_1,r_0,r_1\in (0,\infty]$, $\theta\in [0,1]$ and $\frac1s:=\frac{1-\theta}{s_0}+\frac\theta{s_1}$, $\frac1r:=\frac{1-\theta}{r_0}+\frac{\theta}{r_1}$.

\medskip

The following lemma is the key construction in proving extrapolation theorems for weighted spaces.
We extend an approach by Nieraeth \cite{Nie19a}, which crucially uses the symmetry of the definition of $[\om]_{(s_0,r_0)}$ to circumvent the necessity of giving two different arguments for $p<p_0$ and $p>p_0$.
Even for $\Omega=\R^d$, the formulation with $u=\infty$ or $p=\infty$ seems to be new.
Its formulation is facilitated by the following definition of weight classes and rescaled exponents.
\begin{definition}\label{js212}
Let $\frac1{\gamma}\in\R$, $s_0,r_0\in (0,\infty]$, and let $\om$ and $\Om$ be weights.
Then we introduce
\begin{align*}
(t_0,t,\om_{t},\Om_{t},W)
\end{align*}
in the following way: Define $s,r\in (0,\infty]$ via $\frac1{s}-\frac1{s_{0}}=\frac1{r_{0}}-\frac1{r}=\frac1{\gamma}$ and set $\frac1\alpha:=\frac1{s_0}+\frac1{r_0}$.
Moreover, set
\begin{align}\label{js7e1}
\begin{cases}
 t_0:=\frac{s_{0}}{\alpha}, \quad t:=\frac{s}{\alpha}, \quad (\om_{t},\Om_{t}):=(\om^\alpha, \Om^\alpha) & \text{if } \frac1{\gamma}>0,\\
 t_0:=1, \quad t:=1, & \text{if } \frac1{\gamma}=0,\\
 t_0:=\frac{r_{0}}{\alpha}, \quad t:=\frac{r}{\alpha}, \quad (\om_{t},\Om_{t}):=(\Om^{-\alpha},\om^{-\alpha}) & \text{if } \frac1{\gamma}<0.
\end{cases}
\end{align}
and
\begin{align}\label{js204}
W:=
\begin{cases}
 \set{(\om,\Om)} & \text{if } \frac1{\gamma}=0, \\
 \setc{(\tilde\om,\tilde\Om)}{\tilde\om\tilde\Om^{-1}=\om\Om^{-1} \text{ and } [\tilde\om,\tilde\Om]_{(s_0,r_0)}<\infty} & \text{else.} 
\end{cases}
\end{align}
For the case $\frac1{\gamma}=0$ we do not introduce $(\om_{t},\Om_{t})$, but use the following convention: $\calm$ is considered to be bounded from $\LR{t}_{\Om_{t}}(\Omega)$ to $\LR{t}_{\om_{t}}(\Omega)$.
\end{definition}
At first glance, the convention for $\frac{1}{\gamma} = 0$ may appear unusual.
However, it is specifically designed to ensure that the statements remain valid even in trivial cases, thereby eliminating the need for cumbersome case distinctions that could otherwise distract from the core arguments.
Moreover, if $\om=\Om$, the definition of the weight class $W$ ensures that $\om_0=\Om_0$ for $(\om_0,\Om_0)\in W$.
\begin{lemma}\label{js7}
Let
$u_0,p_0,s_0,r_0,u,p,s,r\in (0,\infty]$ be such that
\begin{align*}
\frac1{u_0}-\frac1u=\frac1p-\frac1{p_0}=\frac1s-\frac1{s_0}=\frac1{r_0}-\frac1r=:\frac1\gamma.
\end{align*}
Let $\om$, $\Om$ be weights with $[\om,\Om]_{(s,r)}<\infty$, and define
\begin{align*}
 (t_0, t, \om_t,\Om_t, W) \text{ via Definition \ref{js212}}.
\end{align*}
If $\calm$ is bounded from $\LR{t}_{\Om_t}(\Omega)$ to $\LR{t}_{\om_t}(\Omega)$, then for all $f\in \LR{p}_\Om(\Omega)$, $h\in \LR{u}_{\om^{-1}}(\Omega)$ there are weights $(\om_0,\Om_0)\in W$ such that $f\in \LR{p_0}_{\Om_0}(\Omega), \ h\in \LR{u_0}_{\om_0^{-1}}(\Omega)$.\\
For every $\kappa\in(1,\infty)$, these weights can be chosen such that
\begin{align}\label{js7_e1}
[\om_0,\Om_0]_{(s_0,r_0)}&\le (\kappa'\|\calm\|_{\LR{t}_{\Om_t}\to\LR{t}_{\om_t}})^{\frac t{|\gamma|}}[\om,\Om]_{(s,r)}^{t/t_0},
\end{align}
as well as
\begin{align*}
\|f\|_{\LR{p_0}_{\Om_0}}\|h\|_{\LR{u_0}_{\om_0^{-1}}}\le \kappa^{\frac t{|\gamma|}}\|f\|_{\LR{p}_\Om}\|h\|_{\LR{u}_{\om^{-1}}}.
\end{align*}
\end{lemma}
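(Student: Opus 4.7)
The plan is to split into three cases depending on the sign of $1/\gamma$.

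In the trivial case $1/\gamma=0$, Definition~\ref{js212} sets $t=t_0=1$ and $W=\{(\om,\Om)\}$, so the choice $(\om_0,\Om_0):=(\om,\Om)$ reduces both inequalities to equalities (the exponents $t/|\gamma|$ being zero). The case $1/\gamma<0$ I would reduce to $1/\gamma>0$ via Proposition~\ref{mw_weight_prop}\ref{mw_weight_prop_i}: replacing $(\om,\Om)$ by $(\Om^{-1},\om^{-1})$ swaps $(s,r)\leftrightarrow(r,s)$ and $(s_0,r_0)\leftrightarrow(r_0,s_0)$, flipping the sign of $1/\gamma$; simultaneously I interchange the roles of $(f,p,p_0)$ and $(h,u,u_0)$ and recover $(\om_0,\Om_0)$ by inversion from the output of the positive case. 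The right-hand sides of the two estimates then match because Definition~\ref{js212} stipulates $t=r/\alpha$, $t_0=r_0/\alpha$ in the negative case.

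For the main case $1/\gamma>0$, the idea is to construct $(\om_0,\Om_0)=(\phi\om,\phi\Om)$ for a nonnegative weight $\phi$, so that the ratio condition $\om_0\Om_0^{-1}=\om\Om^{-1}$ needed for membership in $W$ is automatic. The weight $\phi$ comes from a Rubio de Francia-type iteration: with $N:=\|\calm\|_{\LR{t}_{\Om_t}\to\LR{t}_{\om_t}}$ and $K:=\kappa'N$, set
\[
  \calr G:=\sum_{k=0}^\infty \frac{\calm^k G}{K^k}.
\]
After the substitution $G\mapsto G\,\Om_t^{-1}\om_t$ the two-weight bound becomes a one-weight bound on $\LR{t}$ of the same operator norm $N$, so the geometric series yields a control on $\calr G$ with constant $\kappa=\sum_k K^{-k}N^k$. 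The pointwise sublinearity of $\calm$ gives $\calm(\calr G)\le K\cdot\calr G$, which by Proposition~\ref{mw_weight_prop}\ref{mw_weight_prop_iv} translates to $[\calr G]_{(1,\infty)}\le K$. The input function is a normalised combination of the form $G:=c_1(|f|\Om/\|f\|_{\LR{p}_\Om})^{p/t}+c_2(|h|\om^{-1}/\|h\|_{\LR{u}_{\om^{-1}}})^{u/t}$, chosen (with appropriate factors of $\om_t$) so that $\|G\|_{\LR{t}_{\om_t}}\le 2^{1/t}$ and so that the pointwise inequality $\calr G\ge G$ gives useful lower bounds on $\phi:=(\calr G)^\lambda$ in terms of $f$ and $h$.

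With a careful choice of the exponent $\lambda$ balancing $p_0/p$, $u_0/u$, and $t_0=s_0/\alpha$, Proposition~\ref{mw_weight_prop}\ref{mw_weight_prop_ii}--\ref{mw_weight_prop_iii} then splits $[\phi\om,\phi\Om]_{(s_0,r_0)}$ into a $\phi$-factor (bounded by $K^{t/\gamma}$ via the $(1,\infty)$-bound on $\calr G$) and an $(\om,\Om)$-factor (bounded by $[\om,\Om]_{(s,r)}^{t/t_0}$ via the rescaling $\om\mapsto\om^\alpha$). The pointwise bound $\calr G\ge G$ rearranges into $\LR{p_0}_{\Om_0}$- and $\LR{u_0}_{\om_0^{-1}}$-bounds for $f$ and $h$ via H\"older's inequality with exponents matched to $\frac1p-\frac1{p_0}=\frac1{u_0}-\frac1u=\frac1\gamma$, yielding the product constant $\kappa^{t/|\gamma|}$. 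The main technical challenge is aligning $\lambda$ so that all three bounds --- the weight-characteristic bound and the two function-norm bounds --- emerge \emph{simultaneously} with the stated powers $t/|\gamma|$ and $t/t_0$; a secondary difficulty is extracting the iterable one-weight structure from only the two-weight maximal bound. The symmetric notation $[\cdot,\cdot]_{(s,r)}$, which factors (Proposition~\ref{mw_weight_prop}\ref{mw_weight_prop_iii}) and inverts (Proposition~\ref{mw_weight_prop}\ref{mw_weight_prop_i}) cleanly, is the tool that makes this bookkeeping tractable.
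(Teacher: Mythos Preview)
Your overall strategy --- symmetry reduction to $1/\gamma>0$, a Rubio de Francia iteration to build the modifying factor $\phi$, and factoring $[\phi\om,\phi\Om]_{(s_0,r_0)}$ via Proposition~\ref{mw_weight_prop} --- is exactly the paper's. The paper additionally rescales to $\alpha=1$ at the outset (Proposition~\ref{mw_weight_prop}\ref{mw_weight_prop_ii}), so that $t=s$, $t_0=s_0$, which removes most of the bookkeeping you flag as the ``main technical challenge''.

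The one substantive discrepancy is your input to $\calr$: you feed in a sum of normalised $f$- and $h$-terms, whereas the paper uses only $f$, setting $H$ via $H^s\om^s=|f|^p\Om^p$. The $h$-summand is not just superfluous; it spoils the stated constant. Since $p_0>p$ forces $\lambda<0$, the pointwise bound $\calr G\ge G$ yields an \emph{upper} bound on $\phi=(\calr G)^\lambda$, which is precisely what damps $|f|^{p_0}$ down to $|f|^p$ in the $\LR{p_0}_{\Om_0}$ estimate. But for $h$ the weight $\om_0^{-1}=\phi^{-1}\om^{-1}$ carries the positive power $-\lambda$, so a pointwise lower bound on $\calr G$ points the wrong way; the $\LR{u_0}_{\om_0^{-1}}$ estimate necessarily comes from H\"older together with the \emph{norm} control $\|\calr G\|_{\LR{t}_{\om_t}}\le\kappa\|G\|_{\LR{t}_{\om_t}}$. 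Your $h$-summand therefore never gets used --- it only inflates $\|G\|_{\LR{t}_{\om_t}}$ above $1$, producing an extraneous factor in the product bound instead of the sharp $\kappa^{t/|\gamma|}$. With the paper's choice one obtains $\|f\|_{\LR{p_0}_{\Om_0}}\le\|f\|_{\LR{p}_\Om}^{p/p_0}$ from the pointwise step (no constant) and $\|h\|_{\LR{u_0}_{\om_0^{-1}}}\le\kappa^{t/\gamma}\|h\|_{\LR{u}_{\om^{-1}}}\|f\|_{\LR{p}_\Om}^{1-p/p_0}$ from H\"older, and the product collapses exactly to $\kappa^{t/\gamma}\|f\|_{\LR{p}_\Om}\|h\|_{\LR{u}_{\om^{-1}}}$.
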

\begin{proof}
For $\frac1{\gamma}=0$, the assertion is trivial with $(\om_0,\Om_0):=(\om,\Om)$.
Thus, it suffices to assume $\gamma\in (0,\infty)$ by symmetry in $\gamma\leftrightarrow -\gamma$, $u_0\leftrightarrow p_0$, $r_0\leftrightarrow s_0$, $u\leftrightarrow p$, $s\leftrightarrow r$, $\om_0 \leftrightarrow \Om_0^{-1}$, and $\om\leftrightarrow \Om^{-1}$.
Moreover, replacing $(\om,\Om)$ by $(\om^\alpha,\Om^\alpha)$ if necessary, we may assume $\alpha=1$ by Proposition \ref{mw_weight_prop}\ref{mw_weight_prop_ii}.
In particular, $p$ is finite, $s=t\in [1,\infty)$ and $s_0=t_0\in [1,\infty]$ with $s'=r$ and $s_0'=r_0$.
Let $f\in \LR{p}_\Om(\Omega)$, $h\in \LR{u}_{\om^{-1}}(\Omega)$, and $\kappa\in(1,\infty)$.
We assume that $f\ne 0$ (otherwise replace $f$ by some nonzero element in $\LR{p}_\Om(\Omega)$).

\medskip

Define $H$ via $H^{s}\om^{s}=|f|^p\Om^p$ and with the notation $\calm_t g:=\calm g_t$, where $g_t:=g\om\Om^{-1}$,
 \begin{align*}
 R:=\sum_{k=0}^\infty\frac{\calm_t^k H}{(\kappa'\|\calm_t\|_{\LR{s}_{\om}\to\LR{s}_{\om}})^k}.
\end{align*}
Since $\|\calm_t\|_{\LR{s}_{\om}\to\LR{s}_{\om}}=\|\calm\|_{\LR{s}_{\Om}\to\LR{s}_{\om}}$, it follows from this definition and Proposition \ref{mw_weight_prop}\ref{mw_weight_prop_iv} that
\begin{enumerate}
\item\label{js7ii} $R>0$ and $R^{-1}\le H^{-1}$ almost everywhere,
\item\label{js7i} $\calm R_t\le \kappa'\|\calm\|_{\LR{s}_{\Om}\to\LR{s}_{\om}}R$ for $R_t:=R\om\Om^{-1}$, i.e. $[R_t,R]_{(1,\infty)}\le \kappa'\|\calm\|_{\LR{s}_{\Om}\to\LR{s}_{\om}}$,
\item\label{js7iii} $\|R\|_{\LR{s}_{\om}}\le \kappa\|H\|_{\LR{s}_{\om}}$.
\end{enumerate}
Let $\om_0:=R^{-\frac s{\gamma}}\om^{\frac s{s_0}}$ and $\Om_0:=R_t^{-\frac{s}{\gamma}}\Om^{\frac{s}{s_0}}$, so that $\om_0\Om_0^{-1}=\om\Om^{-1}$ in virtue of $\frac{s}{\gamma}+\frac{s}{s_0}=1$.
Since $\frac{s}{\gamma}+\frac{s}{s_0 r}=\frac{1}{r_0}$, Proposition \ref{mw_weight_prop} shows
\begin{align}\label{js7_e2}
[\om_0,\Om_0]_{(s_0,r_0)}
\le [R^{-\frac{s}{\gamma}},R_t^{-\frac{s}{\gamma}}]_{(\infty,\frac{\gamma}{s})} [\om^{\frac{s}{s_0}},\Om^{\frac{s}{s_0}}]_{(s_0,s_0 r/s)} 
= [R_t,R]_{(1,\infty)}^{\frac{s}{\gamma}}[\om,\Om]_{(s,r)}^{\frac{s}{s_0}}.
\end{align}
Thus, \eqref{js7_e1} follows from \eqref{js7_e2} via \ref{js7i}.
On $\setc{x\in\Omega}{|f(x)|>0}$, \ref{js7ii} shows $|f|\Om_0\le |f| (H\om\Om^{-1})^{-\frac{s}{\gamma}} \Om^{\frac{s}{s_0}}=(|f|\Om)^{\frac{p}{p_0}}$, so that
\begin{align*}
\|f\Om_0\|_{p_0} \le  \|f\Om\|_{p}^{\frac{p}{p_0}}.
\end{align*}
Moreover, H\"older's inequality yields
\begin{align*}
\|h\om_0^{-1}\|_{u_0}\le \|h\om^{-1}\|_{u}\|\om_0^{-1}\om\|_{\gamma}
=\|h\om^{-1}\|_{u}\|R\om\|_{s}^{\frac{s}{\gamma}},
\end{align*}
so that the assertion follows since $\|R\om\|_{s}^{\frac{s}{\gamma}}\le \kappa^{\frac{s}{\gamma}}\|H\om\|_{s}^{\frac{s}{\gamma}} = \kappa^{\frac{s}{\gamma}}\|f\Om\|_p^{1-\frac{p}{p_0}}$ by \ref{js7iii}.
\end{proof}
\begin{remark}\label{js21}
\begin{enumerate}
\item\label{js21ii} While Lemma \ref{js7} holds for $s,r\in (0,\infty]$, in applications it is often limited to $s,r\in (0,\infty)$.
Clearly, if both $s=r=\infty$, then $1/\gamma=0$ and thus the assertion is trivial.
If only one exponent is infinite, say $r=\infty$, then $t=1$, and hence $\calm$ needs to be bounded from $\LR{1}_{\Om^s}(\Omega)$ to $\LR{1}_{\om^s}(\Omega)$ with $[\om^s,\Om^s]_1=[\om,\Om]_{(s,\infty)}^s<\infty$, which is typically not the case.
\item\label{js21iii} If there is $c:(1,\infty]\to[1,\infty)$ such that
\begin{align*}
 \|\calm\|_{\LR{p}_{\om}(\Omega)\to\LR{p}_{\om}(\Omega)}\le c(p)[\om]_{p}^{p'} \quad \text{for all $p\in (1,\infty]$ and weights $\om$,}
\end{align*}
then the boundedness condition for $\calm$ in Lemma \ref{js7} is fulfilled automatically in the case $s,r\in (0,\infty)$ and $\om=\Om$, since $t\in(1,\infty]$ and $[w_t]_t^{t'}=[w]_{(s,r)}^{\alpha t'}<\infty$.
Moreover, $\alpha t' \frac{t}{|\gamma|} + \frac{t}{t_0}=\frac{t'}{t_0'}$, so that estimate \eqref{js7_e1} turns into
\begin{align*}
[\om_0]_{(s_0,r_0)}&\le (\kappa'c(t))^{\frac t{|\gamma|}} [\om]_{(s,r)}^{t'/t_0'} \quad \text{ for all }\kappa\in(1,\infty).
\end{align*}
We mention that $\frac{t'}{t_0'}=\frac{r}{r_0}$ if $\gamma>0$ and $\frac{t'}{t_0'}=\frac{s}{s_0}$ if $\gamma<0$.
\end{enumerate}
\end{remark}
As an immediate corollary to Lemma \ref{js7} we obtain that a weighted $\LR{p}$ space is included in the union over a certain class of weighted $\LR{p_0}$ spaces.
\begin{corollary}\label{js201}
Let $p_0,s_0,r_0,p,s,r\in (0,\infty]$ be such that
\begin{align*}
 \frac1p-\frac1{p_0}=\frac1s-\frac1{s_0}=\frac1{r_0}-\frac1r=:\frac1\gamma.
\end{align*}
Let $\om$ and $\Om$ be weights with $[\om,\Om]_{(s,r)}<\infty$, and define
\begin{align*}
 (t_0, t, \om_t,\Om_t, W) \text{ via Definition \ref{js212}}.
\end{align*}
If $\calm$ is bounded from $\LR{t}_{\Om_t}(\Omega)$ to $\LR{t}_{\om_t}(\Omega)$, then it holds
\begin{align*}
\LR{p}_\Om(\Omega)\subseteq \bigcup_{(\om_0,\Om_0)\in W}\LR{p_0}_{\Om_0}(\Omega).
\end{align*}
\end{corollary}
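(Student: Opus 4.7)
The plan is to deduce the inclusion as an immediate consequence of Lemma \ref{js7}, exploiting the fact that within the lemma the weights $(\om_0,\Om_0)$ are manufactured from $f$ alone; the auxiliary function $h$ enters only in the quantitative bounds and the parallel conclusion $h\in\LR{u_0}_{\om_0^{-1}}(\Omega)$, which trivialises if one chooses $h\equiv 0$.

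First I would dispose of the degenerate case $\frac{1}{\gamma}=0$: here Definition \ref{js212} forces $W=\{(\om,\Om)\}$ together with $p=p_0$, so the claimed inclusion is a tautology (and no assumption on $\calm$ is needed, in line with the convention of Definition \ref{js212}). For the remainder of the argument assume $\frac{1}{\gamma}\ne 0$.

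Next, fix $f\in\LR{p}_\Om(\Omega)$ arbitrarily. I would select a matching pair of auxiliary exponents $(u_0,u)\in(0,\infty]^2$ obeying $\frac{1}{u_0}-\frac{1}{u}=\frac{1}{\gamma}$; for example, take $(u_0,u):=(\gamma,\infty)$ when $\frac{1}{\gamma}>0$ and $(u_0,u):=(\infty,-\gamma)$ when $\frac{1}{\gamma}<0$. Set $h:=0$, which lies in $\LR{u}_{\om^{-1}}(\Omega)$ irrespective of the weight $\om$ and exponent $u$. Applying Lemma \ref{js7} to the full chain of exponents $(u_0,p_0,s_0,r_0,u,p,s,r)$ and to the pair $(f,h)$ produces weights $(\om_0,\Om_0)\in W$ for which $f\in\LR{p_0}_{\Om_0}(\Omega)$; the lemma's parallel assertion $h\in\LR{u_0}_{\om_0^{-1}}(\Omega)$ is automatic. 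This exhibits $f$ as an element of $\bigcup_{(\om_0,\Om_0)\in W}\LR{p_0}_{\Om_0}(\Omega)$, which is the desired inclusion.

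I do not expect any genuine obstacle: the hypothesis on the maximal operator is identical to the one invoked by Lemma \ref{js7}, the choice of auxiliary exponents is always feasible once $\frac{1}{\gamma}\ne 0$, and the zero function supplies a valid input in $\LR{u}_{\om^{-1}}(\Omega)$. The only aspect worth flagging is that $W$ is implicitly nonempty under the stated hypotheses, which is ensured by the very application of Lemma \ref{js7} to any $f\in\LR{p}_\Om(\Omega)$.
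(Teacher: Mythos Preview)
Your proposal is correct and mirrors the paper's intent: the paper simply declares the result an ``immediate corollary to Lemma \ref{js7}'' without spelling out the details, and your argument---pick any admissible pair $(u_0,u)$, take $h=0$, and read off $(\om_0,\Om_0)\in W$ with $f\in\LR{p_0}_{\Om_0}(\Omega)$---is precisely how one unpacks that immediacy. The only minor remark is that the case $f=0$ (handled in the proof of Lemma \ref{js7} by replacing $f$ with a nonzero element) still yields some $(\om_0,\Om_0)\in W$, and $0\in\LR{p_0}_{\Om_0}(\Omega)$ trivially, so the inclusion holds there too.
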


We now obtain a limited-range off-diagonal extrapolation result for multilinear operators.
In order to formulate the result, we introduce the following notation.
We fix $m\in\N$ and write $J:=\set{1,\ldots,m}$.
For $\vec{\gamma}\in (0,\infty]^m$ write $\frac1{\gamma}:=\sum_{j\in J} \frac1{\gamma_j}$.
Similarly, we write $\vec{\om}=(\om_{1},\ldots,\om_{m})$ for weights $\om_{j}$, $j\in J$, and introduce the product weight $\om:=\prod_{j\in J} \om_{j}$.
We set $[\vec{\om},\vec{\Om}]_{(\vec{s},\vec{r})}:=([\om_{j},\Om_{j}]_{(s_{j},r_{j})})_{j\in J}$ and write $[\vec{\om},\vec{\Om}]_{(\vec{s},\vec{r})}<\infty$ if $[\om_{j},\Om_{j}]_{(s_{j},r_{j})}<\infty$ for all $j\in J$.
\begin{theorem}\label{mw_extra_pol_thm_main}
 Let $q_i\in (0,\infty]$ and $\vec{p}_i,\vec{s}_i,\vec{r}_i\in (0,\infty]^m$ be such that
\begin{align}\label{js8}
  \frac{1}{\vec{p}_1}-\frac{1}{\vec{p}_0}=\frac{1}{\vec{s}_1}-\frac{1}{\vec{s}_0}=\frac{1}{\vec{r}_0}-\frac{1}{\vec{r}_1}=:\frac1{\vec{\gamma}}, \quad \frac1{q_1}-\frac1{q_0}=\frac1\gamma.
\end{align}
 Let $\vec{\om}_{1}$, $\vec{\Om}_{1}$ be vectors of weights with $[\vec{\om}_{1},\vec{\Om}_{1}]_{(\vec{s}_{1},\vec{r}_{1})}<\infty$, and define for $j\in J$
 \begin{align*}
 (t_{0j}, t_{1j}, \om_{t_{1j}},\Om_{t_{1j}}, W_j) \text{ via Definition \ref{js212}}
\end{align*}
  with $\gamma:=\gamma_{j}$, $s_0:=s_{0j}$, $r_0:=r_{0j}$, $\om:=\om_{1j}$, $\Om:=\Om_{1j}$.
  Assume the following:
  \begin{enumerate}
 \item $\calm$ is bounded from $\LR{t_{1j}}_{\Om_{t_{1j}}}(\Omega)$ to $\LR{t_{1j}}_{\om_{t_{1j}}}(\Omega)$ for all $j\in J$.
  \item For all $j\in J$ there is a set $V_j$, a map $S_j:V_j\to\LR{0}(\Omega)$, a map
   \begin{align*}
  T:\prod_{j\in J}\bigcup_{(\om_{0j},\Om_{0j})\in W_j} S^{-1}(\LR{p_{0j}}_{\Om_{0j}}(\Omega))\to \LR{0}(\Omega),
 \end{align*}
 and a function $\phi:\R^m\to\R$ increasing in each component such that for all vectors of weights $(\vec{\om}_{0}, \vec{\Om}_{0})\in \vec{W}:=W_1\times\cdots\times W_m$ and for all $\vec{f}\in \prod_{j\in J} S_j^{-1}(\LR{p_{0j}}_{\Om_{0j}}(\Omega))$ it holds
\begin{align}\label{js010a}
 \|T\vec{f}\|_{\LR{q_0}_{\om_0}}\leq \phi([\vec{\om}_{0},\vec{\Om}_{0}]_{(\vec{s}_0,\vec{r}_0)})\prod_{j\in J}\|S_jf_j\|_{\LR{p_{0j}}_{\Om_{0j}}}.
\end{align}
  \end{enumerate}
Then $T\vec{f}$ is well-defined for all $\vec{f}\in \prod_{j\in J} S_j^{-1}(\LR{p_{1j}}_{\Om_{1j}}(\Omega))$, and for all $\kappa\in (1,\infty)$ it holds
\begin{align}\label{js5}
 \|T\vec{f}\|_{\LR{q_{1}}_{\om_1}}\le \kappa^{\beta} \phi(\vec{C}_\kappa) \prod_{j\in J}\|S_jf_j\|_{\LR{p_{1j}}_{\Om_{1j}}},
\end{align}
where $\beta:=\sum_{j\in J} \frac {t_{1j}}{|\gamma_j|}$ and $C_{\kappa j}:=(\kappa'\|\calm\|_{\LR{t_{1j}}_{\Om_{t_{1j}}}\to\LR{t_{1j}}_{\om_{t_{1j}}}})^{\frac {t_{1j}}{|\gamma_j|}}[\om_{1j},\Om_{1j}]_{(s_{1j},r_{1j})}^{t_{1j}/t_{0j}}$.
\end{theorem}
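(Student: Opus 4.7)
The plan is to construct for each slot $j\in J$ suitable weights $(\om_{0j},\Om_{0j})\in W_j$ via the Rubio-de-Francia algorithm from (the proof of) Lemma~\ref{js7}, invoke the hypothesis \eqref{js010a} at the $(q_0,\vec{p}_0,\om_0,\Om_0)$-scale, and then transfer to the target $(q_1,\vec{p}_1,\om_1,\Om_1)$-scale via a pair of H\"older inequalities.

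I would first reduce to the case $\frac{1}{\gamma_j}\ge 0$ for every $j\in J$ by invoking, on each slot separately, the symmetry $(\om_{1j},\Om_{1j},s_{1j},r_{1j},\gamma_j)\leftrightarrow (\Om_{1j}^{-1},\om_{1j}^{-1},r_{1j},s_{1j},-\gamma_j)$ already exploited in the proof of Lemma~\ref{js7}; slots with $\frac{1}{\gamma_j}=0$ are trivial. After this reduction $\frac{1}{\gamma}\ge 0$ and hence $q_1\le q_0$. Applying the construction from the proof of Lemma~\ref{js7} to each remaining slot with input $f_j:=S_jf_j$ produces $(\om_{0j},\Om_{0j})\in W_j$ satisfying $[\om_{0j},\Om_{0j}]_{(s_{0j},r_{0j})}\le C_{\kappa j}$, the slotwise norm bound $\|f_j\|_{\LR{p_{0j}}_{\Om_{0j}}}\le \|f_j\|_{\LR{p_{1j}}_{\Om_{1j}}}^{p_{1j}/p_{0j}}$, and an explicit representation $\om_{0j}=R_j^{-s_{1j}/(\alpha_j\gamma_j)}\om_{1j}^{s_{1j}/s_{0j}}$ in terms of the Rubio-de-Francia iterate $R_j$, where $\alpha_j:=(1/s_{0j}+1/r_{0j})^{-1}$. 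In particular $f_j\in\LR{p_{0j}}_{\Om_{0j}}$ for each $j$, so that $T\vec f$ is well-defined.

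Since $\frac{1}{q_1}=\frac{1}{q_0}+\frac{1}{\gamma}$ with $\frac{1}{\gamma}\ge 0$, H\"older's inequality gives
\[
 \|T\vec{f}\|_{\LR{q_1}_{\om_1}}\le \|T\vec{f}\|_{\LR{q_0}_{\om_0}}\,\|\om_1/\om_0\|_{\LR{\gamma}},
\]
which remains valid for every $q_1\in(0,\infty]$. The first factor is controlled by \eqref{js010a} combined with the slotwise bound above. For the second, a further H\"older inequality with exponents $\gamma_j/\gamma$ (summing to one in their reciprocals via $\frac{1}{\gamma}=\sum_j\frac{1}{\gamma_j}$) yields $\|\om_1/\om_0\|_{\LR{\gamma}}\le\prod_j\|\om_{1j}/\om_{0j}\|_{\LR{\gamma_j}}$. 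The explicit form of $\om_{0j}$ together with the Rubio-de-Francia bound $\|R_j\|_{\LR{t_{1j}}_{\om_{t_{1j}}}}\le\kappa\|H_j\|_{\LR{t_{1j}}_{\om_{t_{1j}}}}$ from the proof of Lemma~\ref{js7} (where $H_j$ satisfies $\|H_j\|_{\LR{t_{1j}}_{\om_{t_{1j}}}}^{t_{1j}}=\|f_j\|_{\LR{p_{1j}}_{\Om_{1j}}}^{p_{1j}}$) then yields $\|\om_{1j}/\om_{0j}\|_{\LR{\gamma_j}}\le\kappa^{t_{1j}/\gamma_j}\|f_j\|_{\LR{p_{1j}}_{\Om_{1j}}}^{p_{1j}/\gamma_j}$. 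Multiplying everything and using $\frac{1}{p_{1j}}=\frac{1}{p_{0j}}+\frac{1}{\gamma_j}$, the exponents of $\|f_j\|_{\LR{p_{1j}}_{\Om_{1j}}}$ collapse to one each while the $\kappa$-factors combine to $\kappa^{\sum_j t_{1j}/|\gamma_j|}=\kappa^\beta$, giving \eqref{js5}.

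The chief obstacle is the precise bookkeeping of the $\alpha_j$-rescaling implicit in Definition~\ref{js212}: the manipulations involving $R_j$, $\om_{1j}^{\alpha_j}$ and the rescaled norms must be executed carefully so as to produce exactly $\kappa^{t_{1j}/|\gamma_j|}$ rather than $\kappa^{s_{1j}/|\gamma_j|}$ as the slotwise $\kappa$-exponent. Endpoint choices cause no substantial difficulty: $q_1=\infty$ forces $\frac{1}{\gamma}=0$ and is thus trivial, and individual choices of $p_{1j},s_{1j},r_{0j}\in\{0,\infty\}$ only require consistent interpretations of the H\"older estimates above.
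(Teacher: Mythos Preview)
Your argument is correct when all the $\gamma_j$ share the same sign, but the reduction to that case is a genuine gap. The symmetry you cite from Lemma~\ref{js7} acts on a single slot and, crucially, swaps $f\leftrightarrow h$ (equivalently $p\leftrightarrow u$) in addition to $(\om,\Om,s,r,\gamma)\leftrightarrow(\Om^{-1},\om^{-1},r,s,-\gamma)$. In your setup there is no $h$; applying only the weight-and-exponent half of the symmetry to slot $j$ replaces $1/\gamma_j$ by $-1/\gamma_j$ and therefore changes the total $\tfrac1\gamma=\sum_j\tfrac1{\gamma_j}$, destroying the relation $\tfrac1{q_1}-\tfrac1{q_0}=\tfrac1\gamma$ on which your H\"older step $\|T\vec f\|_{\LR{q_1}_{\om_1}}\le\|T\vec f\|_{\LR{q_0}_{\om_0}}\|\om_1/\om_0\|_{\LR\gamma}$ relies. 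Concretely, for $m=2$ with $\gamma_1>0$ and $\gamma_2<0$ there is no slot-wise relabeling that makes both $\gamma_j$ nonnegative while preserving $q_0,q_1$; and your splitting $\|\om_1/\om_0\|_{\LR\gamma}\le\prod_j\|\om_{1j}/\om_{0j}\|_{\LR{\gamma_j}}$ has no meaning for $\gamma_j<0$. The endpoint claim is affected too: $q_1=\infty$ only forces $\tfrac1\gamma=0$ after the reduction, so this case is not covered either.

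The paper avoids the difficulty by dualising first. After rescaling by a small $\lambda$ so that $\tilde q_i:=q_i/\lambda\in(1,\infty]$, it writes $\|T\vec f\|_{\LR{q_1}_{\om_1}}^\lambda$ as a supremum of $\bigl|\int_\Omega|T\vec f|^\lambda h\,\dd\mu\bigr|$ over unit-norm $h\om_1^{-\lambda}\in\LR{\tilde q_1'}$, and splits $|h|=\prod_j h_j^\lambda$ with $h_j\om_{1j}^{-1}\in\LR{u_{1j}}$ for suitably chosen $u_{1j}$. Lemma~\ref{js7} is then applied to each slot with \emph{both} inputs $S_jf_j$ and $h_j$; its internal symmetry uses $S_jf_j$ to build the Rubio-de-Francia weight when $\gamma_j>0$ and $h_j$ when $\gamma_j<0$, so mixed signs are handled uniformly. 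Your direct H\"older route is an elegant alternative when the signs are uniform, but the dual function $h$ is precisely the missing ingredient in the mixed-sign case.
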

\begin{proof}
Let $\vec{f}\in \prod_{j\in J} S_j^{-1}(\LR{p_{1j}}_{\Om_{1j}}(\Omega))$ and $\kappa\in(1,\infty)$.
Then $T\vec{f}$ is well defined by Corollary \ref{js201}.
Choose $\frac1\lambda\in(0,\infty)$ sufficiently large such that $\frac1\lambda > \frac1{q_0}+\frac{m}{|\gamma_j|}$ for all $j\in J$.
Then $\tilde q_0,\tilde q_1\in (1,\infty]$ for $\tilde{q}_0:=q_0/\lambda$ and $\tilde q_1:=q_1/\lambda$.
For $j\in J$ we define $u_{0j}:=m \lambda \tilde{q}_0'$ and $u_{1j}\in (0,\infty]$ via $\frac{1}{u_{0j}}-\frac{1}{u_{1j}}=\frac{1}{\gamma_j}$, so that $\sum_{j\in J} \frac{\lambda}{u_{1j}}=\frac1{\tilde{q}_1'}$ by \eqref{js8}.
Thus for $h\om_1^{-\lambda}\in \LR{\tilde{q}_1'}(\Omega)$ with unit norm, the choice $h_j:=\om_{1j} (|h|\om_1^{-\lambda})^{\tilde{q}_1'/u_{1j}}$ yields $0\le h_j\om_{1j}^{-1}\in \LR{u_{1j}}(\Omega)$ with unit norm and $|h|= \prod_{j\in J} h_j^\lambda$ due to $\om_1:=\prod_{j\in J} \om_{1j}$.

\medskip

\noindent
For $j\in J$, apply Lemma \ref{js7} with the set of parameters $(u_0,p_0,s_0,r_0,u,p,s,r)$ replaced by the set $(u_{0j}, p_{0j}, s_{0j}, r_{0j},u_{1j}, p_{1j}, s_{1j}, r_{1j})$ to the weights $\om_{1j}$, $\Om_{1j}$ and functions $S_jf_j$ and $h_j$.
This yields weights $(\vec{\om}_{0}, \vec{\Om}_{0})\in \vec{W}$ such that $[\vec{\om}_{0},\vec{\Om}_{0}]_{(\vec{s}_0,\vec{r}_0)}\le \vec{C}_\kappa$ componentwise, and by H\"older's inequality (recall $\om_0:=\prod_{j\in J} \om_{0j}$)
\begin{align*}
\Big|\int_\Omega |T\vec{f}|^\lambda h \dd\mu \Big| &\le \||T\vec{f}|^\lambda\om_0^\lambda\|_{\LR{\tilde{q}_0}}\|h\om_0^{-\lambda}\|_{\LR{\tilde{q}_0'}} = \|T\vec{f}\|_{\LR{q_0}_{\om_0}}^\lambda\|h\om_0^{-\lambda}\|_{\LR{\tilde{q}_0'}} \\
&\le \phi([\vec{\om}_0,\vec{\Om}_0]_{(\vec{s}_0,\vec{r}_0)})^\lambda \prod_{j\in J} \|S_j f_j\|_{\LR{p_{0j}}_{\Om_{0j}}}^\lambda\|h_j\om_{0j}^{-1}\|_{\LR{u_{0j}}}^\lambda \\
&\le \kappa^{\lambda\sum_{j\in J}\frac{t_{1j}}{|\gamma_j|}}\phi(\vec{C})^\lambda \prod_{j\in J} \|S_jf_j \|_{\LR{p_{1j}}_{\Om_{1j}}}^\lambda.
\end{align*}
Since $\|T\vec{f}\|_{\LR{q_1}_{\om_1}}^\lambda=\||T\vec{f}|^\lambda\om_1^\lambda\|_{\LR{\tilde{q}_1}}$ is attained by taking the supremum of $\left|\int_\Omega |Tf|^\lambda h \dd\mu \right|$ over all $h\om_{1}^{-\lambda}\in \LR{\tilde{q}_1'}(\Omega)$ with unit norm, we obtain \eqref{js5}.
\end{proof}
\begin{remark}\label{js202}
Let there be $c:(1,\infty]\to [1,\infty)$ such that
\begin{align*}
 \|\calm\|_{\LR{p}_{\om}(\Omega)\to\LR{p}_{\om}(\Omega)}\le c(p)[\om]_{p}^{p'} \quad \text{for all $p\in (1,\infty]$ and weights $\om$.}
\end{align*}
Then by Remark \ref{js21}\ref{js21iii} the boundedness condition for $\calm$ in Theorem \ref{mw_extra_pol_thm_main} is fulfilled if $s_{1j},r_{1j}\in(0,\infty)$ and $\om_{1j}=\Om_{1j}$ for all $j\in J$ with $\frac1{\gamma_j}\ne 0$, and we have for all $\kappa\in (1,\infty)$
\begin{align}\label{js202_e1}
 C_j\le (\kappa'c(t_{1j}))^{\frac {t_{1j}}{|\gamma_j|}}[\om_{1j},\Om_{1j}]_{(s_{1j},r_{1j})}^{t_{1j}'/t_{0j}'}.
\end{align}
\end{remark}

\section{Limited-Range Multilinear Off-Diagonal Mixed-Norm Extrapolation}

In this section we extend Theorem \ref{mw_extra_pol_thm_main} into several directions: In Theorem \ref{mw_extra_pol_prop} we include multilinear off-diagonal extrapolation in mixed-norms, and can allow for certain vector-valued extensions.
To this end we need to introduce some notation.
We fix $\ell\in \N$ and $m\in\N$.
The letters $I$ and $J$ will always denote the sets $I:=\set{1,\ldots,\ell}$ and $J:=\set{1,\ldots,m}$, respectively.
We also fix a $\sigma$-finite measure spaces $(\Omega_i,\mu_{i})$, $i\in I$.

\medskip

For \emph{exponents} $p_{ij}\in(0,\infty]$, $(i,j)\in I\times J$, we consider \emph{vectors} (columns) \emph{of exponents} $\pbf_j=(p_{1j},\ldots,p_{\ell j})\in(0,\infty]^\ell$ which are related to mixed norms, \emph{tuples} (rows) \emph{of exponents} $\vec{p}_i=(p_{i1},\ldots,p_{im})\in (0,\infty]^m$ which are related to multilinearity, and \emph{vector-tuples} (matrices) \emph{of exponents} $\vec{\pbf}=(p_{ij})_{(i,j)\in I\times J}\in (0,\infty]^{\ell\times m}$ which combine the two concepts.
Similarly, for weights $\om_{ij}$ on $\Omega_i$, $(i,j)\in I\times J$, we consider \emph{vectors of weights} $\ombf_j=(\om_{1j},\ldots,\om_{\ell j})$, \emph{tuples of weights} $\vec{\om}_i=(\om_{i1},\ldots,\om_{im})$ on $\Omega_i$, and \emph{vector-tuples of weights} $\vec{\ombf}:=(\om_{ij})_{(i,j)\in I\times J}$.
For vector-tuple of weights $\vec{\ombf}$, $\vec{\Ombf}$ and $\vec{\sbf},\vec{\rbf}\in (0,\infty]^{\ell\times m}$ we introduce $\frac1{s_i}:=\sum_{j=1}^m \frac1{s_{ij}}$, $\sbf:=(s_1,\ldots,s_\ell)$, the product weight $\ombf:=(\prod_{j\in J}\om_{1j},\ldots,\prod_{j\in J}\om_{\ell j})$, the tensor weight $\vec{\om}:=(\otimes_{i\in I} \om_{i1},\ldots,\otimes_{i\in I} \om_{im})$, and
 \begin{align*}
 [\vec{\ombf},\vec{\Ombf}]_{(\vec{\sbf},\vec{\rbf})}:=([\om_{ij},\Om_{ij}]_{(s_{ij},r_{ij})})_{(i,j)\in I\times J}.
 \end{align*}
  We write $[\vec{\ombf},\vec{\Ombf}]_{(\vec{\sbf},\vec{\rbf})}<\infty$ if $[\om_{ij},\Om_{ij}]_{(s_{ij},r_{ij})}<\infty$ for all $(i,j)\in I\times J$.
  Observe that $[\vec{\ombf},\vec{\Ombf}]_{(\vec{\sbf},\vec{\rbf})}<\infty$ implies $[\ombf,\Ombf]_{(\sbf,\rbf)}<\infty$ by virtue of Proposition \ref{mw_weight_prop}\ref{mw_weight_prop_iii}.
  Moreover, for $\vec{s}_0\in (0,\infty]^m$ and $\vec{\sbf}\in (0,\infty]^{\ell\times m}$, we introduce the shifted quantity
  \begin{align}\label{js241}
  \vec{\sbf}_0:=(s_{(i-1)j})_{(i,j)\in I\times J}.
  \end{align}
For a quasi-Banach space $X$ and a vector of exponents $\pbf:=(p_1,\ldots,p_\ell)\in (0,\infty]^\ell$, define the mixed-norm space $\LR{\pbf}(\Omega;X)$ as all strongly measurable $f:\Omega:=\prod_{i\in I} \Omega_i\to X$ such that
  \begin{align*}
  \|f\|_{\LR{\pbf}(\Omega;X)}:=\left(\int_{\Omega_\ell}\cdots \left(\int_{\Omega_1} \|f\|_{X}^{p_1} \mathrm{d}\mu_1\right)^{\frac{p_{2}}{p_1}}\cdots \mathrm{d}\mu_\ell\right)^{\frac1{p_\ell}}<\infty,
  \end{align*}
  with the usual modifications if $p_i=\infty$ for some $i\in I$.
  Given a weight $\om$, we define the weighted mixed-norm space $\LR{\pbf}_{\om}(\Omega)$ as all $f\in \LR{0}(\Omega,X)$ with
  \begin{align*}
   \|f\|_{\LR{\pbf}_{\om}(\Omega;X)}:=\|f\om\|_{\LR{\pbf}(\Omega;X)}<\infty.
  \end{align*}
  Often we consider a tensor weight $\om$ defined as the tensor weight of a vector of weights $\ombf$.
  In that case we write $\LR{\pbf}_{\ombf}:=\LR{\pbf}_\om$ to emphasize this point.
  For $\vec{\pbf}\in (0,\infty]^{\ell\times m}$ and a vector-tuple of weights $\vec{\ombf}$ we write $\LR{\vec{\pbf}}_{\vec{\ombf}}(\Omega):=\prod_{j\in J} \LR{\pbf_j}_{\ombf_j}(\Omega)$.

\medskip

  For any $\Lambda\subseteq I$ we write $\Omega_{\Lambda}:=\prod_{i\in\Lambda} \Omega_i$ and equip it with the corresponding product measure.
  We are particularly interested in the sets $\Lambda_i:=\set{i,\ldots,\ell}$ for $i\in I$.
  We will henceforth assume that $\Omega$ fulfills the following assumption.
  \begin{assumption}\label{ass_2}
  Each $\Omega_\Lambda$ comes with a basis of sets $\calu_\Lambda$, and $\calu_{\Lambda}\subseteq \calu_{\Lambda'}\times \calu_{\Lambda''}$ if $\Lambda$ is the disjoint union of $\Lambda'$ and $\Lambda''$.
  \end{assumption}
    For weights $\om'$ on $\Omega_{\Lambda'}$ and $\om''$ on $\Omega_{\Lambda''}$ we thus have $[\om'\otimes\om'']_{(s,r)}^{\calu_\Lambda}\le [\om']_{(s,r)}^{\calu_{\Lambda'}}[\om'']_{(s,r)}^{\calu_{\Lambda''}}$ for all $s,r\in (0,\infty]$.
  We usually drop the dependence on the bases in the notation $[\om]_{(s,r)}^{\calu_\Lambda}$.
In the non-mixed norm setting $\ell=1$, Definition \ref{js212} provided a natural framework for introducing weight classes and rescaled parameters.
To adapt this approach for the mixed-norm context, we now extend the definition.
\begin{definition}\label{js212a}
Let $\frac1{\gambf}\in\R^\ell$, $s_0,r_0\in (0,\infty]$, and let $\ombf$ and $\Ombf$ be tuples of weights.
Then we introduce
\begin{align*}
(\tbf_0,\tbf,\ombf_{\tbf},\Ombf_{\tbf},W)
\end{align*}
in the following way: Define $\sbf,\rbf\in (0,\infty]^\ell$ recursively via $\frac1{s_i}-\frac1{s_{i-1}}=\frac1{r_{i-1}}-\frac1{r_i}=\frac1{\gamma_i}$ (with the shifted notation $\frac1{\sbf}-\frac1{\sbf_0}=\frac1{\rbf_0}-\frac1{\rbf}=\frac1{\gambf}$) and set $\frac1\alpha:=\frac1{s_0}+\frac1{r_0}$.
Moreover, set
\begin{align}\label{js7e1a}
\begin{cases}
 t_{0i}:=\frac{s_{i-1}}{\alpha}, \quad t_i:=\frac{s_i}{\alpha}, \quad (\om_{t_i},\Om_{t_i}):=(\otimes_{k=i}^\ell \om_k^\alpha,\otimes_{k=i}^\ell \Om_k^\alpha) & \text{if } \frac1{\gamma_i}>0,\\
 t_{0i}:=1, \quad t_i:=1, & \text{if } \frac1{\gamma_i}=0,\\
 t_{0i}:=\frac{r_{i-1}}{\alpha}, \quad t_i:=\frac{r_i}{\alpha}, \quad (\om_{t_i},\Om_{t_i}):=(\otimes_{k=i}^\ell \Om_k^{-\alpha},\otimes_{k=i}^\ell\om_k^{-\alpha}) & \text{if } \frac1{\gamma_i}<0.
\end{cases}
\end{align}
and (recall $\om:=\otimes_{i\in I} \om_i$, $\Om:=\otimes_{i\in I} \Om_i$)
\begin{align}\label{js204a}
W:=
\begin{cases}
 \set{(\om,\Om)} & \text{if } \frac1{\gambf}=\mathbf{0}, \\
 \setc{(\tilde\om,\tilde\Om)}{\tilde\om\tilde\Om^{-1}=\om\Om^{-1} \text{ and } [\tilde\om,\tilde\Om]_{(s_0,r_0)}<\infty} & \text{else.} 
\end{cases}
\end{align}
For the case $\frac1{\gamma_i}=0$ we do not introduce $(\om_{t_i},\Om_{t_i})$, but use the following convention: $\calm$ is considered to be bounded from $\LR{t_i}_{\om_{t_i}}(\Omega_{\Lambda_i})$ to $\LR{t_i}_{\Om_{t_i}}(\Omega_{\Lambda_i})$.
\end{definition}
\begin{lemma}\label{js210}
Let $p_0,s_0,r_0\in (0,\infty]$ and $\pbf,\sbf,\rbf\in (0,\infty]^\ell$ be such that (with the notation in \eqref{js241})
\begin{align*}
\frac1{\pbf}-\frac1{\pbf_0}=\frac1{\sbf}-\frac1{\sbf_0}=\frac1{\rbf_0}-\frac1{\rbf}=:\frac1{\gambf}.
\end{align*}
Let $\ombf$ and $\Ombf$ be tuples of weights with $[\ombf,\Ombf]_{(\sbf,\rbf)}<\infty$.
Define
\begin{align*}
 (\tbf_0, \tbf, \ombf_{\tbf},\Ombf_{\tbf},W) \text{ via Definition \ref{js212a}}.
\end{align*}
 If $\calm_{\calu_{\Lambda_i}}$ is bounded from $\LR{t_{i}}_{\Om_{t_{i}}}(\Omega_{\Lambda_i})$ to $\LR{t_{i}}_{\om_{t_{i}}}(\Omega_{\Lambda_i})$ for all $i\in I$,
 then it holds
\begin{align*}
\LR{\pbf}_{\Ombf}(\Omega)\subseteq \bigcup_{(\om_0,\Om_0)\in W} \LR{p_0}_{\Om_0}(\Omega).
\end{align*}
\end{lemma}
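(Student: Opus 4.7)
The plan is to prove Lemma \ref{js210} by induction on $\ell$, the number of mixed-norm factors.

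The base case $\ell=1$ is precisely Corollary \ref{js201}: Definition \ref{js212a} collapses to Definition \ref{js212}, $\Omega_{\Lambda_1}=\Omega$, and the single hypothesized maximal operator bound is exactly the one required by the univariate statement. For the inductive step, suppose the result holds for $\ell-1$. Given $f\in L^{\pbf}_{\Ombf}(\Omega)$, I would isolate the outermost integration by setting
$$F(x_\ell):=\|f(\cdot,x_\ell)\|_{L^{(p_1,\ldots,p_{\ell-1})}_{\Om_1\otimes\cdots\otimes\Om_{\ell-1}}(\Omega_1\times\cdots\times\Omega_{\ell-1})},$$
so by Fubini $F\in L^{p_\ell}_{\Om_\ell}(\Omega_\ell)=L^{p_\ell}_{\Om_\ell}(\Omega_{\Lambda_\ell})$. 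To $F$ I would apply the univariate Corollary \ref{js201} on $\Omega_\ell$ with exponents $p_\ell,p_{\ell-1}$ and ranges $(s_\ell,r_\ell),(s_{\ell-1},r_{\ell-1})$, invoking the hypothesized $\calm$-bound at level $i=\ell$. The compatibility of the ``local'' rescaling in Definition \ref{js212} with the ``global'' $\alpha$ of Definition \ref{js212a} is automatic because the identity $\tfrac{1}{s_i}+\tfrac{1}{r_i}=\tfrac{1}{s_0}+\tfrac{1}{r_0}$ holds at every $i$. This yields weights $(\tilde\om_\ell,\tilde\Om_\ell)$ on $\Omega_\ell$ with $\tilde\om_\ell\tilde\Om_\ell^{-1}=\om_\ell\Om_\ell^{-1}$, finite $(s_{\ell-1},r_{\ell-1})$-characteristic, and $F\in L^{p_{\ell-1}}_{\tilde\Om_\ell}(\Omega_\ell)$.

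Rewriting via Fubini, $f$ now lies in an $(\ell-1)$-fold mixed-norm space obtained by merging $\Omega_{\ell-1}\times\Omega_\ell$ into a single factor $\hat\Omega_{\ell-1}$ with tensor weight $\Om_{\ell-1}\otimes\tilde\Om_\ell$. Assumption \ref{ass_2} ensures a compatible basis on $\hat\Omega_{\ell-1}$, and Proposition \ref{mw_weight_prop}\ref{mw_weight_prop_iii} supplies the finiteness of all relevant weight characteristics in the reduced system. Applying the induction hypothesis then produces weights $(\om_0,\Om_0)\in W$ with $f\in L^{p_0}_{\Om_0}(\Omega)$.

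The main obstacle I anticipate is verifying that the hypothesized $\calm$-bounds at levels $i<\ell$ transfer to the reduced system, since the last tensor factor of $\Om_{t_i}$ changes from $\Om_\ell^\alpha$ to $\tilde\Om_\ell^\alpha$. The resolution exploits that the Rubio de Francia construction from Lemma \ref{js7} preserves the ratio $\tilde\om_\ell/\tilde\Om_\ell=\om_\ell/\Om_\ell$: rewriting the auxiliary operator as $\calm_t g:=\calm(g\cdot\om/\Om)$ shows that only the invariant ratio enters the construction, so the original hypothesized bounds can be applied unchanged once one matches the corresponding operator norms. The remaining algebraic bookkeeping on the shifted parameters $\sbf_0,\rbf_0,\gambf$ in the reduced system, as well as the minor case distinction between $\gamma_\ell>0$, $\gamma_\ell<0$, and $\gamma_\ell$ with $1/\gamma_\ell=0$, is routine.
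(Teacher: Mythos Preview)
Your induction on $\ell$---peeling off the outermost variable via Corollary~\ref{js201}, then merging $\Omega_{\ell-1}\times\Omega_\ell$ into a single factor and invoking the $(\ell-1)$-case---is exactly the argument the paper gives; in fact the paper is terser and simply writes ``again by the induction hypothesis'' without discussing the obstacle you flag.

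One caution about your resolution of that obstacle. Ratio invariance does give $\tilde\om_\ell/\tilde\Om_\ell=\om_\ell/\Om_\ell$, hence $\om'_{t_i}/\Om'_{t_i}=\om_{t_i}/\Om_{t_i}$, so the auxiliary operator $\calm_t g=\calm(g\,\om_{t_i}/\Om_{t_i})$ is indeed unchanged. But the Rubio de Francia iteration in Lemma~\ref{js7} requires $\calm_t$ to be bounded on $L^{t_i}_{\om'_{t_i}}$, equivalently $\calm$ bounded $L^{t_i}_{\Om'_{t_i}}\to L^{t_i}_{\om'_{t_i}}$; this is a different operator norm from the hypothesised $\|\calm\|_{L^{t_i}_{\Om_{t_i}}\to L^{t_i}_{\om_{t_i}}}$, and equality of ratios alone does not identify the two. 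The paper does not address this point either. In the intended applications (cf.\ Remark~\ref{mw_extra_pol_rm}) the issue disappears because there the $\calm$-bound is assumed uniformly for \emph{all} weights with finite characteristic, and $[\om'_{t_i},\Om'_{t_i}]_{t_i}<\infty$ follows from Proposition~\ref{mw_weight_prop}\ref{mw_weight_prop_iii} together with $[\mu,\nu]_{(s_{\ell-1},r_{\ell-1})}<\infty$.
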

\begin{proof}
 For $\ell=1$, the assertion is simply Corollary \ref{js201}.
 Let the result be true for all $\ell'<\ell$, and consider $f\in \LR{\pbf}_{\Ombf}(\Omega)$.
 Define $\tilde f\in \LR{p_\ell}_{\Om_\ell}(\Omega_\ell)$ by $\tilde f(x_\ell):=\|f(\cdot,x_\ell)\|_{\LR{\tilde{\pbf}}_{\tilde{\Ombf}}(\tilde\Omega)}$, where $\tilde{\pbf}=(p_1,\ldots,p_{\ell-1})$, $\tilde{\ombf}=(\om_1,\ldots,\om_{\ell-1})$, and $\tilde\Omega:=\prod_{i=1}^{\ell-1}\Omega_i$.
 The induction hypothesis yields weights $\mu$ and $\nu$ on $\Omega_\ell$ with $(\mu,\nu)=(\om_{\ell},\Om_{\ell})$ if $\frac1{\gamma_\ell}=0$ and $\mu\nu^{-1}=\om_{\ell}\Om_{\ell}^{-1}$ otherwise, such that $\tilde f\in \LR{p_{\ell-1}}_{\nu}(\Omega_\ell)$, i.e. $f\in \LR{\tilde{\pbf}}_{(\Om_1,\ldots,\Om_{\ell-1}\otimes \nu)}(\Omega)$.
 Thus, again by the induction hypothesis, there are weights $\om_0$ and $\Om_0$ on $\Omega$ with $(\om_0,\Om_0)=(\otimes_{i\in I} \om_i,\otimes_{i\in I} \Om_i)$ if $\frac1{\gambf}=0$ and $\frac{\om_0}{\Om_0}=(\frac{\om_1}{\Om_1})\otimes\cdots\otimes(\frac{\om_{\ell-1}\otimes\mu}{\Om_{\ell-1}\otimes\nu})$ otherwise, such that $f\in \LR{p_0}_{\Om_0}(\Omega)$.
 Since $(\frac{\om_{\ell-1}\otimes\mu}{\Om_{\ell-1}\otimes\nu})=(\frac{\om_{\ell-1}}{\Om_{\ell-1}})\otimes (\frac{\om_{\ell}}{\Om_{\ell}})$ by the definition of $\mu$ and $\nu$, the result follows.
\end{proof}
\begin{theorem}\label{mw_extra_pol_prop}
Let $q_0\in (0,\infty]$, $\vec{p}_0,\vec{s}_0,\vec{r}_0\in (0,\infty]^m$, $\qbf\in (0,\infty]^\ell$ and $\vec{\pbf},\vec{\sbf},\vec{\rbf}\in (0,\infty]^{\ell\times m}$ be such that (with the notation in \eqref{js241})
\begin{align}\label{js011}
 \frac{1}{\vec{\pbf}}-\frac{1}{\vec{\pbf}_0}=\frac{1}{\vec{\sbf}}-\frac{1}{\vec{\sbf}_0}=\frac{1}{\vec{\rbf}_0}-\frac{1}{\vec{\rbf}}=:\frac1{\vec{\gambf}}, \quad \frac1{\qbf}-\frac1{\qbf_0}=\frac1{\gambf}.
\end{align}
Let $\vec{\ombf}$ and $\vec{\Ombf}$ be vector-tuples of weights with $[\vec{\ombf},\vec{\Ombf}]_{(\vec{\sbf},\vec{\rbf})}<\infty$, and define for $j\in J$
\begin{align*}
(\tbf_{0j},\tbf_j,\ombf_{\tbf_j},\Ombf_{\tbf_j},W_j) \text{ via Definition \ref{js212a}}.
\end{align*}
We assume the following:
\begin{enumerate}
\item\label{mw_extra_pol_prop_iv} For all $(i,j)\in I\times J$ there is an increasing function $\psi_{ij}:\R\to\R$ with
\begin{align*}
\|\calm_{\calu_{\Lambda_i}}\|_{\LR{t_{ij}}_{\Om_{t_{ij}}}(\Omega_{\Lambda_i})\to \LR{t_{ij}}_{\om_{t_{ij}}}(\Omega_{\Lambda_i})}\le \psi_{ij}([\om_{ij},\Om_{ij}]_{(s_{ij},r_{ij})}),
\end{align*}
where $\Lambda_i:=\set{i,\ldots,\ell}$.
\item\label{mw_extra_pol_prop_iii} For $j\in J$, there is a set $V_j$, a map $S_j:V_j\to \LR{0}(\Omega)$, a map
 \begin{align*}
 T: \prod_{j\in J} \bigcup_{(\om_{0j},\Om_{0j})\in W_j} S_j^{-1}(\LR{p_{0j}}_{\Om_{0j}}(\Omega)) \to \LR{0}(\Omega),
 \end{align*}
 and a function $\phi:\R^m\to \R$ increasing in each component such that for all $(\vec{\om}_0, \vec{\Om}_0)\in \vec{W}:=W_1\times\cdots\times W_m$ and $\vec{f}\in \prod_{j\in J} S_j^{-1}(\LR{\vec{p}_0}_{\vec{\Om}_0}(\Omega))$ it holds
\begin{align}\label{js010}
 \|T\vec{f}\|_{\LR{q_0}_{\om_0}(\Omega)}\leq \phi([\vec{\om}_0,\vec{\Om}_0]_{(\vec{s}_0,\vec{r}_0)})\prod_{j\in J}\|S_j f_j\|_{\LR{p_{0j}}_{\Om_{0j}}(\Omega)},
\end{align}
\end{enumerate}
Then there is a function $\phi_{\vec{s_0},\vec{r_0},\vec{\gambf}}:\R^{\ell\times m}\to \R$ increasing in each component such that
$T\vec{f}$ is well-defined for all $\vec{f}\in \prod_{j\in J} S_j^{-1}(\LR{\pbf_j}_{\Ombf_j}(\Omega))$ and
\begin{align}\label{js012ab}
 \|T\vec{f}\|_{\LR{\qbf}_{\ombf}(\Omega)}\leq \phi_{\vec{s_0},\vec{r_0},\vec{\gambf}}([\vec{\ombf},\vec{\Ombf}]_{\vec{\sbf},\vec{\rbf}}) \prod_{j\in J} \|S_j f_j\|_{\LR{\pbf_j}_{\Ombf_j}(\Omega)}.
\end{align}
\end{theorem}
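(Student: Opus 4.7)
The proof will follow the structure of the proof of Theorem \ref{mw_extra_pol_thm_main}, with the scalar rescaling Lemma \ref{js7} replaced by a quantitative mixed-norm analog (a quantitative refinement of Lemma \ref{js210}). Given a tuple of weights $(\ombf,\Ombf)$ with $[\ombf,\Ombf]_{(\sbf,\rbf)}<\infty$, vectors of exponents $\pbf,\pbf_0,\mathbf{u},\mathbf{u}_0$ satisfying the layerwise compatibility of Lemma \ref{js7}, and functions $f\in\LR{\pbf}_{\Ombf}(\Omega)$, $h\in\LR{\mathbf{u}}_{\ombf^{-1}}(\Omega)$, this mixed-norm rescaling lemma should produce weights $(\om_0,\Om_0)\in W$ and a scalar $u_0\in(0,\infty]$ with $f\in\LR{p_0}_{\Om_0}(\Omega)$, $h\in\LR{u_0}_{\om_0^{-1}}(\Omega)$, together with explicit bounds on $[\om_0,\Om_0]_{(s_0,r_0)}$ and on $\|f\|_{\LR{p_0}_{\Om_0}}\|h\|_{\LR{u_0}_{\om_0^{-1}}}$ expressed through the maximal-operator bounds $\psi_{ij}$ and the data.

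I would establish the mixed-norm rescaling lemma by induction on $\ell$, the base case $\ell=1$ being Lemma \ref{js7}. For the inductive step, the plan is to apply Lemma \ref{js7} on the outermost layer $\Omega_{\Lambda_\ell}=\Omega_\ell$ (using hypothesis \ref{mw_extra_pol_prop_iv} at $i=\ell$) to rescale the outer weight, producing an intermediate tuple of weights on the inner space $\tilde{\Omega}:=\prod_{i<\ell}\Omega_i$, and then invoke the inductive hypothesis on $\tilde{\Omega}$ to conclude. The recursive definitions of $\om_{t_{ij}},\Om_{t_{ij}}$ in Definition \ref{js212a} and the shifted-index notation \eqref{js241} are designed precisely so that the weight classes at successive layers compose cleanly, while Assumption \ref{ass_2} together with Proposition \ref{mw_weight_prop}\ref{mw_weight_prop_iii} ensures that the weight characteristics multiply along the product structure.

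With the mixed-norm rescaling lemma in hand, the remainder of the argument mirrors the proof of Theorem \ref{mw_extra_pol_thm_main}. I would pick $\lambda\in(0,1)$ such that all components of $\qbf/\lambda$ and $\qbf_0/\lambda$ lie in $(1,\infty]$, use iterated duality on the outer mixed norm to write
\begin{align*}
 \|T\vec{f}\|_{\LR{\qbf}_{\ombf}(\Omega)}^\lambda = \sup_{h}\left|\int_\Omega |T\vec{f}|^\lambda h\dd\mu\right|,
\end{align*}
where $h$ ranges over test functions of unit norm in the iterated dual, decompose $h=\prod_{j\in J}h_j^\lambda$ by iterated H\"older on the mixed norm, and apply the mixed-norm rescaling lemma componentwise to $(\ombf_j,\Ombf_j,S_jf_j,h_j)$ to produce $(\vec{\om}_0,\vec{\Om}_0)\in\vec{W}$. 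A further application of iterated H\"older combined with the hypothesis \eqref{js010} for the $\LR{q_0}_{\om_0}$-factor and the rescaling bounds for the remaining factors, followed by taking the supremum over $h$, then yields \eqref{js012ab}. The main obstacle is the inductive construction in the mixed-norm rescaling lemma: after rescaling the outermost layer one must ensure that the additional tensor factors inherited by the inner weights do not spoil the weight-characteristic bounds at the remaining layers, and the product-type structure encoded by Definition \ref{js212a} and Proposition \ref{mw_weight_prop}\ref{mw_weight_prop_iii} is what makes this telescoping work.
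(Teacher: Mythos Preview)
Your approach is workable but organized differently from the paper's. You propose to first extract a quantitative mixed-norm analog of Lemma~\ref{js7} by induction on~$\ell$, and then run the duality/H\"older argument of Theorem~\ref{mw_extra_pol_thm_main} once on the full mixed-norm space. The paper instead inducts directly on the theorem: the base case is Theorem~\ref{mw_extra_pol_thm_main}, and for the inductive step one applies Theorem~\ref{mw_extra_pol_thm_main} on all of $\Omega$ (with tensor weights $\om_{1j}\otimes\mu_j$, $\Om_{1j}\otimes\nu_j$ for arbitrary $(\mu_j,\nu_j)\in\widetilde{W}_j$) to pass from the level-$0$ exponents to the level-$1$ exponents, then defines new operators $\widetilde{T}\vec{g}(x'):=\|T\vec{g}(\cdot,x')\|_{\LR{q_1}_{\om_1}(\Omega_1)}$ and $\widetilde{S}_j g_j(x'):=\|S_j g_j(\cdot,x')\|_{\LR{p_{1j}}_{\Om_{1j}}(\Omega_1)}$ on $\Omega_{\Lambda_2}$, and invokes the induction hypothesis for these. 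In other words, the paper peels off one mixed-norm layer by a single black-box application of the non-mixed theorem, rather than re-opening the rescaling/duality machinery at each stage.

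What this buys the paper is economy: there is no separate mixed-norm rescaling lemma to state or prove, and the duality and H\"older steps are never redone---they are encapsulated once and for all in Theorem~\ref{mw_extra_pol_thm_main}. It also sidesteps the bookkeeping you flag as the ``main obstacle'': because each inductive step feeds $\widetilde{T},\widetilde{S}_j$ back into the \emph{theorem} (which only needs the original tensor weights $(\om_{ij})_{i\ge2}$ at the remaining layers), the maximal-operator hypotheses~\ref{mw_extra_pol_prop_iv} at levels $i\ge2$ apply verbatim without having to control how a Rubio-de-Francia weight constructed at layer~$\ell$ interacts with the maximal operator on $\Omega_{\Lambda_i}$ for $i<\ell$. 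Your route, by contrast, has to thread the constructed weight through all inner layers, and while the product structure in Proposition~\ref{mw_weight_prop}\ref{mw_weight_prop_iii} does make this feasible, it is genuinely more delicate. The upside of your formulation is modularity: a standalone quantitative version of Lemma~\ref{js210} could be reused elsewhere.
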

\begin{proof}
We establish the conclusion by induction $\ell\in\N$ in a slightly stronger form:
There are $\phi_j=\phi_{j,\vec{s_0},\vec{r_0},\vec{\gambf}}:(1,\infty)\times\R^{\ell}\to \R$ increasing in each component with the following property:
$T\vec{f}$ is well-defined for all $\vec{f}\in \prod_{j\in J} S_j^{-1}(\LR{\pbf_j}_{\Ombf_j}(\Omega))$, and for all $\kappa\in (1,\infty)$ it holds
\begin{align}\label{js012}
 \|T\vec{f}\|_{\LR{\qbf}_{\ombf}(\Omega)}\leq \kappa^{\ell/\alpha}\phi((\phi_j(\kappa',[\ombf_j,\Ombf_j]_{\sbf_j,\rbf_j}))_{j\in J}) \prod_{j\in J} \|S_j f_j\|_{\LR{\pbf_j}_{\Ombf_j}(\Omega)}.
\end{align}
Let $\vec{f}\in \prod_{j=1}^m S_j^{-1}(\LR{\pbf_j}_{\Ombf_j}(\Omega))$.
Then $T\vec{f}$ is well-defined by Lemma \ref{js210}.
Fix $\kappa\in (1,\infty)$.
Observe that the case $\ell=1$ holds by Theorem \ref{mw_extra_pol_thm_main} and the assumption in \ref{mw_extra_pol_prop_iv}.
Suppose that the assertion is true for all $\ell'<\ell$.
Let $\vec{\mu},\vec{\nu}$ be weight tuples on $\Omega_{\Lambda_2}=\prod_{i=2}^{\ell} \Omega_i$ with $(\mu_j,\nu_j)\in \widetilde{W}_{j}:=W(s_{1j},r_{1j}, \otimes_{i=2}^\ell \om_{ij}\Om_{ij}^{-1}) $.
Then $[\vec{\om}_1\otimes\vec{\mu},\vec{\Om}_1\otimes \vec{\nu}]_{(\vec{s}_1,\vec{r}_1)}\le[\vec{\om}_1,\vec{\Om}_1]_{(\vec{s}_1,\vec{r}_1)}[\vec{\mu},\vec{\nu}]_{(\vec{s}_1,\vec{r}_1)}<\infty$.
Estimate \eqref{js010} is valid by assumption for all $(\vec{\om}_0,\vec{\Om}_0)$ with $(\om_{0j},\Om_{0j})\in W(s_{0j},r_{0j},\frac{\om_{1j}\otimes \mu_{j}}{\Om_{1j}\otimes\nu_{j}})\subseteq W_j$ for all $j\in J$ and all $\vec{g}\in \prod_{j\in J} S_j^{-1}(\LR{\vec{p}_0}_{\vec{\Om}_0}(\Omega))$, and serves as input \eqref{js010a} to Theorem \ref{mw_extra_pol_thm_main}.
This yields that $T\vec{g}$ is well-defined for all $\vec{g}\in \prod_{j\in J} S_j^{-1}(L^{p_{1j}}_{\Om_{1j}\otimes\mu_j}(\Omega))$ and
\begin{align}\label{js13}
 \|T\vec{g}\|_{\LR{q_1}_{\om_1\otimes\mu}}\le \kappa^{\frac1{\alpha}}\phi((C_{1 j}(\kappa',[\om_{1j},\Om_{1j}]_{(s_{1 j},r_{1 j})},[\mu_j,\nu_j]_{(s_{1 j},r_{1 j})}))_{j\in J}) \prod_{j\in J}\|S_j g_j\|_{\LR{p_{1 j}}_{\Om_{1 j}\otimes\nu_j}},
\end{align}
where $C_{1 j}(\rho,x_j,y_j):=(\rho \psi_{1j}(x_jy_j))^{t_{1j}/|\gamma_{1j}|}(x_jy_j)^{t_{1j}/t_{01j}}$.
Let $\widetilde{S}_j:V_j\to \LR{0}(\Omega_{\Lambda_2})$ be given by $\widetilde{S}_j g_j(x'):=\|S_j g_j(x',\cdot)\|_{\LR{p_{1 j}}_{\Om_{1 j}}(\Omega_1)}$ and
 \begin{align*}
 \widetilde T: \prod_{j=1}^m \bigcup_{(\om_{0j},\Om_{0j})\in \widetilde{W}_j}\widetilde{S}_j^{-1}(\LR{p_{0j}}_{\Om_{0j}}(\Omega_{\Lambda_2})) \to \LR{0}(\Omega_{\Lambda_2}), \qquad
 \widetilde T\vec{g}(x'):=\|T\vec{g}(x',\cdot)\|_{\LR{q_1}_{\om_1}(\Omega_1)},
 \end{align*}
so that
 \eqref{js13} reads
\begin{align*}
\|\widetilde T\vec g\|_{\LR{q_1}_{\mu}(\Omega_{\Lambda_2})}\le \tilde\phi([\vec{\mu},\vec{\nu}]_{(\vec{s}_1,\vec{r}_1)})\prod_{j\in J}\|\widetilde{S}_j g_j\|_{\LR{p_{1 j}}_{\nu_j}(\Omega_{\Lambda_2})},
\end{align*}
where $\tilde\phi:\R^m\to \R$, $\tilde\phi(y_1,\ldots,y_m):=\kappa^{\frac1{\alpha}}\phi((C_{1 j}(\kappa',[\om_{1j},\Om_{1j}]_{(s_{1 j},r_{1 j})},y_j))_j)$.
Since $\frac1{\alpha_j}=\frac1{s_{0j}}+\frac1{r_{0j}}=\frac1{s_{1 j}}+\frac1{r_{1 j}}$, we may use the induction hypothesis for $(q_0,\vec{p}_0,\vec{s}_0,\vec{r}_0)\mapsto (q_1,\vec{p}_1,\vec{s}_1,\vec{r}_1)$, $\phi\mapsto \tilde\phi$, $W_j\mapsto \widetilde{W}_j$, $S\mapsto \widetilde S$ and $T\mapsto \widetilde T$ to obtain for all $j\in J$ functions $\tilde{\phi}_j:(1,\infty)\times \R^{\ell-1}\to \R$ with
\begin{align}\label{js012a}
 \|\widetilde{T}\vec{f}\|_{\LR{\tilde{\qbf}}_{\tilde{\ombf}}(\Omega_{\Lambda_2})}\leq \kappa^{(\ell-1)/\alpha}\tilde\phi((\tilde{\phi}_j(\kappa',[\tilde{\ombf}_j,\tilde{\Ombf}_j]_{\tilde{\sbf}_j,\tilde{\rbf}_j}))_{j\in J}) \prod_{j\in J} \|\widetilde{S}_j f_j\|_{\LR{\tilde{\pbf}_j}_{\tilde{\Ombf}_j}(\Omega_{\Lambda_2})},
\end{align}
where $\tilde{\qbf}:=(q_2,\ldots,q_\ell)$, $\tilde{\ombf}_j=(\om_{2j},\ldots,\om_{\ell j})$ and similarly for $\tilde{\Ombf}_j$, $\tilde{\pbf}_j$, $\tilde{\sbf}_j$, and $\tilde{\rbf}_j$.
Defining $\phi_j(\rho, y_{1j},\ldots,y_{\ell j}):=C_{1j}(\rho,y_1,\tilde{\phi}_j(\rho,y_{2},\ldots,y_{\ell}))$, estimate \eqref{js012a} turns into \eqref{js012}.
\end{proof}

\begin{remark}\label{mw_extra_pol_rm}
 \begin{enumerate}
 \item The proof of Theorem \ref{mw_extra_pol_prop} allows to track the exact definition of $\phi_{\vec{s_0},\vec{r_0},\vec{\gambf}}$ recursively.
 In particular, assume
 that there is $c:(1,\infty]\to [1,\infty)$ such that for all $p\in (1,\infty]$, $i\in I$, and weights $\om$ on $\Omega_{\Lambda_i}$ with $[\om]_p^{\calu_{\Lambda_i}}<\infty$, we have that $\calm_{\calu_{\Lambda_i}}$ is bounded in $\LR{p}_{\om}(\Omega_{\Lambda_i})$ with
 \begin{align*}
  \|\calm_{\calu_{\Lambda_i}}\|_{\LR{p}_{\om}(\Omega_{\Lambda_i})\to \LR{p}_{\om}(\Omega_{\Lambda_i})}\le c(p)[\om]_p^{p'}.
 \end{align*}
 Then if $\vec{\sbf},\vec{\rbf}\in (0,\infty)^{\ell\times m}$, $\vec{\ombf}=\vec{\Ombf}$, and $\kappa\in (1,\infty)$, an iteration of the bound \eqref{js202_e1} in Remark \ref{js202} shows that we may choose $\phi_{\vec{s_0},\vec{r_0},\vec{\gambf}}([\vec{\ombf},\vec{\Ombf}]_{\vec{\sbf},\vec{\rbf}}):=\kappa^{\ell/\alpha}\phi(\vec{C})$ with
 \begin{align*}
C_{j}:=&
 \prod_{i\in I} (\kappa'c(t_{ij}))^{\frac{t_{ij}b_{(i-1)j}}{|\gamma_{ij}|}}[\om_{ij}]_{(s_{ij},r_{ij})}^{b_{ij}},
\end{align*}
where $b_{ij}=\prod_{k=1}^i t_{kj}'/t_{0kj}'$ for $i\in I$.
  \item
  Theorem \ref{mw_extra_pol_prop} also contains a weak-type extrapolation result by an argument of Grafakos and Martell \cite{GrM04}:
  One may exchange $ \|T\vec{f}\|_{\LR{q_0}_{\om_0}(\Omega)}$ by $ \|T\vec{f}\|_{\LR{q_0,\infty}_{\om_0}(\Omega)}$ in \eqref{js010} if one also exchanges $\|T\vec{f}\|_{\LR{\qbf}_{\ombf}(\Omega)}$ by $\|T\vec{f}\|_{\LR{\qbf,\infty}_{\ombf}(\Omega)}$ in \eqref{js012ab}.
    Here, the weak mixed-norm space $\LR{\pbf,\infty}(\Omega)$ is given as all $f\in \LR{0}(\Omega)$ such that
  \begin{align*}
  \|f\|_{\LR{\pbf,\infty}(\Omega)}:=\sup_{\lambda>0} \lambda \|\1_{E_\lambda}\|_{\LR{\pbf}(\Omega)}<\infty,
  \end{align*}
  where $E_\lambda:=\setc{x\in \Omega}{|f|>\lambda}$, and $f\in \LR{\pbf,\infty}_{\ombf}(\Omega)$ if $f\om\in \LR{\pbf,\infty}(\Omega)$, where $\om$ is the tensor weight induced by $\ombf$.
  To see the weak-type extrapolation, consider for $\lambda>0$ the set $E_{\lambda}:=\setc{x\in\Omega}{|T\vec{f}(x)|>\lambda}$ and define $T_\lambda \vec{f}:=\lambda\1_{E_\lambda}$.
  Applying Theorem \ref{mw_extra_pol_prop} with $T\mapsto T_\lambda$, the assertion is obtained in the same way as in \cite[Theorem 6.1]{GrM04}.
 \end{enumerate}
\end{remark}
Finally, we obtain a vector-valued extrapolation result.
\begin{theorem}\label{rb_extra_pol_thm_main}
 Assume that the proviso of Theorem \ref{mw_extra_pol_prop} is fulfilled,
 and introduce the exponent $\frac1{u_0}:=\min\set{\frac1{q_0},\frac{1}{p_0}}$ and $\vec{u}_{0}:=\frac{u_0}{p_0}\vec{p}_{0}$, where we recall $\frac1{p_0}:=\sum_{j\in J}\frac1{p_{0j}}$.
 Let
 \begin{align*}
 \mathcal{T}\subseteq \set{T: \prod_{j\in J} \bigcup_{(\om_{0j},\Om_{0j})\in W_j} S_j^{-1}(\LR{p_{0j}}_{\Om_{0j}}(\Omega)) \to \LR{0}(\Omega)}.
 \end{align*}
 If there is a function $\phi:\R^m\to \R$ increasing in each component such that for all $T\in \calt$, $(\vec{\om}_0, \vec{\Om}_0)\in \vec{W}:=W_1\times\cdots\times W_m$, and $\vec{f}\in \prod_{j\in J} S_j^{-1}(\LR{\vec{p}_0}_{\vec{\Om}_0}(\Omega))$ it holds
 \begin{align}\label{js010ab}
 \|T\vec{f}\|_{\LR{q_0}_{\om_0}(\Omega)}\leq \phi([\vec{\om}_0,\vec{\Om}_0]_{(\vec{s}_0,\vec{r}_0)})\prod_{j\in J}\|S_j f_j\|_{\LR{p_{0j}}_{\Om_{0j}}(\Omega)},
 \end{align}
 then for all $T\in\mathcal{T}$ and all $\vec{f}\in \prod_{j\in J} S_j^{-1}(\LR{\pbf_j}_{\Ombf_j}(\Omega))$, $T\vec{f}$ is well defined, and
 \begin{align*}
   \|(T_k(f_{1k},\ldots,f_{jk}))_{k\in\N}\|_{\LR{\qbf}_{\ombf}(\Omega;\ell^{u_0})}\leq \phi_{\vec{s_0},\vec{r_0},\vec{\gambf}}([\vec{\ombf},\vec{\Ombf}]_{\vec{\sbf},\vec{\rbf}})\prod_{j\in J} \|(f_{jk})_{k\in\N}\|_{\LR{\pbf_j}_{\Ombf_j}(\Omega;\ell^{u_{0j}})},
 \end{align*}
 for all $(T_k)_{k\in \N}\subseteq \mathcal{T}$ and $(f_{jk})_{k\in\N}\subseteq V_j$ with $(S_jf_{jk})_{k\in\N}\in \LR{\pbf_j}_{\Ombf_j}(\Omega;\ell^{u_{0j}})$ for all $j\in J$.
 The function $\phi_{\vec{s_0},\vec{r_0},\vec{\gambf}}$ can be taken identical to the one in Theorem \ref{mw_extra_pol_prop}.
\end{theorem}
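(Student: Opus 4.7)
The plan is to reduce to a Banach-space-valued version of Theorem \ref{mw_extra_pol_prop} applied to the sequence-valued extension of $T$. For sequences $(T_k)_{k\in\N} \subseteq \calt$ and $(f_{jk})_{k\in\N} \subseteq V_j$, I define the $\ell^{u_0}$-valued map $\hat T(\vec f_\bullet)(x) := (T_k(f_{1k},\ldots,f_{mk})(x))_{k\in\N}$ and the $\ell^{u_{0j}}$-valued companion maps $\hat S_j(f_{j\bullet})(x) := (S_j f_{jk}(x))_{k\in\N}$. The key observation is that the proofs of Lemma \ref{js7} and of Theorems \ref{mw_extra_pol_thm_main} and \ref{mw_extra_pol_prop} only use the pointwise absolute value $|T\vec f(x)|$ of the scalar output, which can be replaced verbatim by $\|\hat T(\vec f_\bullet)(x)\|_{\ell^{u_0}}$ (and analogously for the $\hat S_j$) without altering any construction, Rubio de Francia iteration, or constant. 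Hence Theorem \ref{mw_extra_pol_prop} extends without modification to operators valued in a quasi-Banach space, and the extrapolated function $\phi_{\vec{s_0},\vec{r_0},\vec\gambf}$ is identical to the scalar one.

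It therefore suffices to establish the vector-valued \emph{initial} bound: for every $(\vec\om_0,\vec\Om_0) \in \vec W$,
\begin{align*}
\|\hat T(\vec f_\bullet)\|_{\LR{q_0}_{\om_0}(\Omega;\ell^{u_0})} \leq \phi([\vec\om_0,\vec\Om_0]_{(\vec s_0,\vec r_0)}) \prod_{j\in J} \|\hat S_j(f_{j\bullet})\|_{\LR{p_{0j}}_{\Om_{0j}}(\Omega;\ell^{u_{0j}})}.
\end{align*}
To derive this, I would first apply Minkowski's integral inequality (valid thanks to $u_0 \leq q_0$, so that $q_0/u_0\ge 1$) to pass from $\LR{q_0}(\ell^{u_0})$ to $\ell^{u_0}(\LR{q_0})$ on the left, then apply the assumed scalar bound \eqref{js010ab} to each $T_k \in \calt$, and finally invoke the generalized H\"older inequality $\sum_k \prod_j a_{jk}^{u_0} \leq \prod_j (\sum_k a_{jk}^{u_{0j}})^{u_0/u_{0j}}$, which is valid thanks to $\sum_j 1/u_{0j} = 1/u_0$. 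This produces an intermediate estimate with the right-hand side carrying $\ell^{u_{0j}}(\LR{p_{0j}}_{\Om_{0j}})$ norms in place of the desired $\LR{p_{0j}}_{\Om_{0j}}(\ell^{u_{0j}})$ norms.

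If $p_0 \leq q_0$, then $u_0 = p_0$ and $u_{0j} = p_{0j}$ for every $j$, so Fubini identifies these iterated norms with the claimed $\LR{p_{0j}}(\ell^{u_{0j}})$ norms and the initial bound is complete. If $p_0 > q_0$, however, $u_{0j} < p_{0j}$ and Minkowski's inequality now points the wrong way: the intermediate bound is strictly weaker than the target and the hardest step of the argument appears. I would close the gap by the Rubio de Francia algorithm, using the freedom that the scalar hypothesis holds for \emph{every} admissible weight in $W_j$: for each $j$, one constructs a sequence-adapted auxiliary weight $\tilde\Om_{0j} \in W_j$ by iterating the maximal operator against the envelope $(\sum_k |S_j f_{jk}|^{u_{0j}})^{1/u_{0j}}$ in the spirit of Lemma \ref{js7}, and then applies the scalar hypothesis with $\tilde\Om_{0j}$ in place of $\Om_{0j}$ to obtain directly the $\LR{p_{0j}}_{\Om_{0j}}(\ell^{u_{0j}})$ norm on the right, with the weight characteristic of $\tilde\Om_{0j}$ controlled by that of $\Om_{0j}$. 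Once the initial vector-valued bound is in hand, the Banach-valued Theorem \ref{mw_extra_pol_prop} applied to $\hat T$ yields both the well-definedness of $\hat T(\vec f_\bullet)$ on $\prod_j S_j^{-1}(\LR{\pbf_j}_{\Ombf_j}(\Omega;\ell^{u_{0j}}))$ and the final mixed-norm estimate with the identical constant $\phi_{\vec{s_0},\vec{r_0},\vec\gambf}$.
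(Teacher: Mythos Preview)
Your overall plan---take the $\ell^{u_0}$-norm of the outputs to obtain a scalar map $\widetilde T(\vec f):=\|(T_k\vec f_k)_k\|_{\ell^{u_0}}$ together with $\widetilde S_j f_j:=\|(S_jf_{jk})_k\|_{\ell^{u_{0j}}}$, establish the initial bound \eqref{js010} for $\widetilde T$, and then feed this into the scalar Theorem~\ref{mw_extra_pol_prop}---is exactly the paper's approach, and your observation that only $|T\vec f|$ enters the extrapolation machinery is precisely what makes this reduction legitimate.

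The discrepancy is in your case analysis for the initial bound, which is inverted throughout. Since $\tfrac{1}{u_0}=\min\{\tfrac{1}{q_0},\tfrac{1}{p_0}\}$ one has $u_0=\max\{q_0,p_0\}$, so that $u_0\ge q_0$ and $u_{0j}=\tfrac{u_0}{p_0}\,p_{0j}\ge p_{0j}$ in \emph{every} case. Thus your justification ``valid thanks to $u_0\le q_0$'' is backwards, your assignment ``$p_0\le q_0\Rightarrow u_0=p_0$'' should read $u_0=q_0$, and the purported hard case ``$p_0>q_0\Rightarrow u_{0j}<p_{0j}$'' never occurs (in that situation $u_{0j}=p_{0j}$). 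With the inequalities in the correct direction, the passage
\[
\|(S_jf_{jk})_k\|_{\ell^{u_{0j}}(\LR{p_{0j}}_{\Om_{0j}})}\le\|\widetilde S_jf_j\|_{\LR{p_{0j}}_{\Om_{0j}}}
\]
is always available (Minkowski when $u_{0j}>p_{0j}$, Fubini when $u_{0j}=p_{0j}$), and no auxiliary Rubio de Francia construction on the $S_j$-side is needed. The paper's argument is simply that, depending on whether $q_0=u_0$ or $q_0\ne u_0$, one of the two estimates is Fubini and the other is Minkowski; your proposed extra weight-change step is absent from the paper and rests on a non-existent obstruction.
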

\begin{proof}
For
\begin{align*}
\widetilde{V}_j:=
\setc{(f_{jk})_{k\in\N}\subseteq V_j}{(S_j f_{jk})_{k\in\N}\in \bigcup_{(\om_{0j},\Om_{0j})\in W_j} \LR{p_{0j}}_{\Om_{0j}}(\Omega; \ell^{u_{0j}})}
\end{align*}
define the maps $\widetilde{S}_j:\widetilde{V}_j\to \LR{0}(\Omega)$ and $\widetilde{T}:\prod_{j=J} \widetilde{V}_j 
\to \LR{0}(\Omega)$ by
\begin{align*}
\widetilde{S}_j f_j:=\|(S_j f_{jk})_{k}\|_{\ell^{u_{0j}}}, \quad \widetilde{T}\vec{f}:=\|(T_k\vec{f}_k)_k\|_{\ell^{u_0}}.
\end{align*}
We claim that $\widetilde{S}_j^{-1}(\LR{\pbf_j}_{\Ombf_j}(\Omega))=\setc{(f_{jk})_{k\in\N}\subseteq V_j}{(S_jf_{jk})_{k\in\N}\in \LR{\pbf_j}_{\Ombf_j}(\Omega;\ell^{u_{0j}})}$ for all $j\in J$.
The inclusion ``$\subseteq$'' follows directly from the definition of $\widetilde{S}_j$.
For ``$\supseteq$'' let $(f_{jk})_{k\in\N}$ be in the set on the right-hand side and consider the function 
$h:=\|(S_j f_{jk})_{k\in\N}\|_{\ell^{u_{0j}}}\in \LR{\pbf_j}_{\Ombf_j}(\Omega)$.
Then $h\in \bigcup_{(\om_{0j},\Om_{0j})\in W_j} \LR{p_{0j}}_{\Om_{0j}}(\Omega)$ by Lemma \ref{js210}, that is $(f_{jk})_{k\in\N}\subseteq \widetilde{V}_j$.
Hence we may apply $\widetilde{S}_j$ to $(f_{jk})_{k\in\N}$ to see that $\widetilde{S}_j(f_{jk})_{k\in\N}=h\in \LR{\pbf_j}_{\Ombf_j}(\Omega)$, i.e. $(f_{jk})_{k\in\N}\in \widetilde{S}_j^{-1}(\LR{\pbf_j}_{\Ombf_j}(\Omega))$.

\medskip

For all weight-tuples $(\vec{\om}_0,\vec{\Om}_0)\in \vec{W}$ and all $\vec{f}\in \prod_{j=1}^m \widetilde{S}^{-1}(\LR{p_{0j}}_{\om_{0j}}(\Omega))$ we have both
\begin{align*}
\|\widetilde{T}\vec{f}\|_{\LR{q_0}_{\vec{\om}_0}(\Omega)}\le \|(T_k\vec{f}_k)_{k}\|_{\ell^{u_0}(\LR{q_0}_{\vec{\om}_0}(\Omega))}
\end{align*}
and for all $j\in J$
\begin{align*}
\|(S_jf_{jk})_k\|_{\ell^{u_{0j}}(\LR{p_{0j}}_{\Om_{0j}}(\Omega))}\le \|\widetilde{S}_jf_j\|_{\LR{p_{0j}}_{\Om_{0j}}(\Omega)}.
\end{align*}
Indeed, this follows by Minkowski's inequality for the first estimate and by Fubini for the second estimate, or vice versa depending on $q_0=u_0$ or $q_0\ne u_0$ (observe that $q_0\ne u_0$ implies $p_{0j}= u_{0j}$ for all $j\in J$).
Thus there holds by \eqref{js010ab} and H\"older's inequality
\begin{align*}
\|\widetilde{T}\vec{f}&\|_{\LR{q_0}_{\vec{\om}_0}(\Omega)}\le \|(T_k\vec{f}_k)_{k}\|_{\ell^{u_0}(\LR{q_0}_{\vec{\om}_0}(\Omega))}\le \phi([{\vec{\om}_0}]_{(\vec{s}_0,\vec{r}_0)})\left\|\prod_{j=1}^m \|(S_jf_{jk})_k\|_{\LR{p_{0j}}_{\Om_{0j}}(\Omega)}\right\|_{\ell^{u_0}}\\
&\le \phi([{\vec{\om}_0}]_{(\vec{s}_0,\vec{r}_0)})\prod_{j=1}^m \|(S_jf_{jk})_k\|_{\ell^{u_{0j}}(\LR{p_{0j}}_{\Om_{0j}}(\Omega))}\le \phi([{\vec{\om}_0}]_{(\vec{s}_0,\vec{r}_0)})\prod_{j=1}^m \|\widetilde{S}_jf_j\|_{\LR{p_{0j}}_{\Om_{0j}}(\Omega)}.
\end{align*}
Consequently, $\widetilde{T}\vec{f}$ is well-defined for all $\vec{f}\in \prod_{j=1}^\infty \widetilde{S}_j^{-1}(\LR{\pbf_j}_{\Ombf_j}(\Omega))$ by Theorem \ref{mw_extra_pol_prop} with
\begin{align*}
\|\widetilde{T}\vec{f}\|_{\LR{\qbf}_{\ombf}(\Omega)}\le \phi_{\vec{s}_0,\vec{r}_0,\vec{\sbf}}([\vec{\ombf}]_{(\vec{\sbf},\vec{\rbf})})\prod_{j=1}^m \|\widetilde{S}_jf_j\|_{\LR{\pbf_j}_{\Ombf_j}(\Omega)},
\end{align*}
which is the assertion.
\end{proof}
We finally obtain a result about $\calr$-boundedness of families of linear operators via extrapolation.
For the definition of $\calr$-boundedness and related concepts we refer to the monograph \cite{HNV17}.
We mention its pivotal role in the theory of maximal $\LR{p}$ regularity via Weis' theorem \cite{Wei01}.
\begin{corollary}
Let $q_0, p_0,s_0,r_0\in (0,\infty]$, $\qbf\in [1,\infty)^{\ell}$, $\pbf\in [1,\infty]^\ell$, and $\sbf,\rbf\in (0,\infty]^{\ell}$ be such that (with the notation in \eqref{js241})
\begin{align*}
 \frac1{\qbf}-\frac1{\qbf_0} = \frac{1}{\pbf}-\frac{1}{\pbf_0}=\frac{1}{\sbf}-\frac{1}{\sbf_0}=\frac{1}{\rbf_0}-\frac{1}{\rbf}=:\frac1{\gambf}.
\end{align*}
Let $\ombf$ and $\Ombf$ be tuples of weights with $[\ombf,\Ombf]_{(\sbf,\rbf)}<\infty$, and define
\begin{align*}
(\tbf_{0},\tbf,\ombf_{\tbf},\Ombf_{\tbf},W) \text{ via Definition \ref{js212a}}.
\end{align*}
We assume $\min\set{\frac1{q_0},\frac1{p_0}}=\frac12$ and the following:
\begin{enumerate}
\item\label{mw_extra_pol_prop_iv} For all $i\in I$ there is an increasing function $\psi_{i}:\R\to\R$ with
\begin{align*}
\|\calm_{\calu_{\Lambda_i}}\|_{\LR{t_{i}}_{\Om_{t_{i}}}(\Omega_{\Lambda_i})\to \LR{t_{i}}_{\om_{t_{i}}}(\Omega_{\Lambda_i})}\le \psi_{i}([\om_{i},\Om_{i}]_{(s_{i},r_{i})}),
\end{align*}
where $\Lambda_i:=\set{i,\ldots,\ell}$.
\item\label{mw_extra_pol_prop_iii} There is $\mathcal{T}\subseteq \set{T: \bigcup_{(\om_{0},\Om_{0})\in W} \LR{p_{0}}_{\Om_{0}}(\Omega) \to \LR{0}(\Omega) \text{ is linear}}$
 and a function $\phi:\R^m\to \R$ increasing in each component such that for all $(\om_0, \Om_0)\in W$, all $T\in \calt$ and all $f\in \LR{p_0}_{\Om_0}(\Omega)$ it holds
\begin{align*}
 \|Tf\|_{\LR{q_0}_{\om_0}(\Omega)}\leq \phi([\om_0,\Om_0]_{s_0,r_0)})\|f\|_{\LR{p_{0}}_{\Om_{0}}(\Omega)}.
\end{align*}
\end{enumerate}
Then there is a function $\phi_{s_0,r_0,\gambf}:\R^{\ell}\to \R$ increasing in each component such that
$Tf$ is well-defined for all $T\in \calt$ and $f\in  \LR{\pbf}_{\Ombf}(\Omega)$, and $\calt\subseteq \call(\LR{\pbf}_{\Ombf}(\Omega),\LR{\qbf}_{\ombf}(\Omega))$ is $\calr$-bounded with
\begin{align*}
 \calr(\calt)\le \phi_{s_0,r_0,\gambf}([\ombf,\Ombf]_{\sbf,\rbf}).
\end{align*}
\end{corollary}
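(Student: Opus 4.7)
The plan is to reduce the claimed $\calr$-boundedness to an $\ell^2$-valued extrapolation estimate supplied by Theorem \ref{rb_extra_pol_thm_main}, and then bridge the two via Khintchine's inequality applied pointwise and integrated against the mixed norm.

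First I would specialize Theorem \ref{rb_extra_pol_thm_main} to the linear case $m=1$, taking $V_1:=\bigcup_{(\om_0,\Om_0)\in W}\LR{p_0}_{\Om_0}(\Omega)$, $S_1:=\id$, and the given family $\calt$. The standing hypothesis $\min\set{1/q_0,1/p_0}=1/2$ forces the exponent $u_0$ appearing in Theorem \ref{rb_extra_pol_thm_main} to equal $2$, so that its conclusion reads
\begin{align*}
\|(T_kf_k)_k\|_{\LR{\qbf}_{\ombf}(\Omega;\ell^2)}\le \phi_{s_0,r_0,\gambf}([\ombf,\Ombf]_{(\sbf,\rbf)})\|(f_k)_k\|_{\LR{\pbf}_{\Ombf}(\Omega;\ell^2)}
\end{align*}
for every $(T_k)_k\subseteq\calt$ and $(f_k)_k\in\LR{\pbf}_{\Ombf}(\Omega;\ell^2)$. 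Well-definedness of $Tf$ for a single $T\in\calt$ and $f\in\LR{\pbf}_{\Ombf}(\Omega)$ is just the trivial case of that conclusion (and also follows directly from Lemma \ref{js210}).

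Second, I would combine this with pointwise Khintchine. On the target side, since $\qbf\in[1,\infty)^\ell$, the standard iterated Khintchine-Fubini argument yields
\begin{align*}
\E\Big\|\sum_k\eps_k T_kf_k\Big\|_{\LR{\qbf}_{\ombf}(\Omega)}\sim \|(T_kf_k)_k\|_{\LR{\qbf}_{\ombf}(\Omega;\ell^2)}
\end{align*}
with constants depending only on $\qbf$. On the domain side, where some $p_i$ may be $\infty$, only the lower bound
\begin{align*}
\|(f_k)_k\|_{\LR{\pbf}_{\Ombf}(\Omega;\ell^2)}\lesssim \E\Big\|\sum_k\eps_k f_k\Big\|_{\LR{\pbf}_{\Ombf}(\Omega)}
\end{align*}
is required; it holds for arbitrary $\pbf\in[1,\infty]^\ell$ by combining pointwise Khintchine on the finite factors with Jensen's inequality $\sup\E\le\E\sup$ to dispatch any $L^\infty$ factor. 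Chaining the three displays over every finite subfamily $(T_k)_{k\le N}\subseteq\calt$ and $(f_k)_{k\le N}\subseteq\LR{\pbf}_{\Ombf}(\Omega)$ produces an $\calr$-bound of the form $C_{\pbf,\qbf}\,\phi_{s_0,r_0,\gambf}([\ombf,\Ombf]_{(\sbf,\rbf)})$, and absorbing $C_{\pbf,\qbf}$ into the final function (still increasing in each component of $[\ombf,\Ombf]_{(\sbf,\rbf)}$) gives the claim.

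The main technical point I anticipate is precisely the mixed-norm handling of Khintchine when components of $\pbf$ are infinite, since the full two-sided equivalence between Rademacher averages and square functions then fails in $\LR{\pbf}_{\Ombf}$. This is sidestepped because only one direction is needed on the domain side and it survives via Jensen; everything else reduces to routine book-keeping that links Theorem \ref{rb_extra_pol_thm_main} with pointwise Khintchine-Kahane.
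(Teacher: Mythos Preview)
Your proposal is correct and follows the same overall strategy as the paper: first obtain the $\ell^2$-valued estimate from Theorem \ref{rb_extra_pol_thm_main} with $m=1$ and $S=\id$, then upgrade it to $\calr$-boundedness. The only difference is in the second step: the paper invokes the abstract criterion \cite[Theorem 8.1.3(iii)]{HNV17}, using that $\LR{\qbf}_{\ombf}(\Omega)$ has finite cotype (by \cite[Corollary 7.1.5]{HNV17}, since $\qbf\in[1,\infty)^\ell$), whereas you unpack this criterion by hand via pointwise Khintchine and Jensen. Your explicit handling of the $L^\infty$-components on the domain side is exactly the content hidden inside that citation --- the one-sided bound $\|(\sum|f_k|^2)^{1/2}\|_X\lesssim\E\|\sum\eps_kf_k\|_X$ holds in any Banach lattice, while the reverse inequality on the target side is what finite cotype buys. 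So the two arguments are the same in substance; yours is self-contained, the paper's is two lines of citations.
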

\begin{proof}
By Theorem \ref{rb_extra_pol_thm_main} applied with $m=1$, $V:=L^0(\Omega)$ and $S:=\id$, we have
  \begin{align*}
   \|(T_k f_{k})_{k\in \N}\|_{\LR{\qbf}_{\ombf}(\Omega;\ell^{2})}\leq \phi_{\vec{s_0},\vec{r_0},\vec{\gambf}}([\vec{\ombf},\vec{\Ombf}]_{\vec{\sbf},\vec{\rbf}}) \|(f_{k})_{k\in\N}\|_{\LR{\pbf}_{\Ombf}(\Omega;\ell^{2})}
 \end{align*}
 for all $(T_k)_{k\in \N}\subseteq \mathcal{T}$ and $(f_{k})_{k\in\N}\subseteq \LR{\pbf}_{\ombf}(\Omega;\ell^{u_{2}})$.
 By our assumptions on $\qbf$ and $\pbf$, both $\LR{\pbf}_{\Ombf}(\Omega)$ and $\LR{\qbf}_{\ombf}(\Omega)$ are Banach spaces.
 Since $\LR{\qbf}_{\ombf}(\Omega)=\LR{\qbf}(\Omega, \otimes \om_i^{q_i}\dd\mu)$ has finite cotype with cotype-bound independent of the weight by \cite[Corollary 7.1.5]{HNV17}, the $\calr$-bound follows by \cite[Theorem 8.1.3(iii)]{HNV17}.
\end{proof}

\section{Weighted Transference Principle}

In this section we specialize to a locally abelian group $\Omega=\gr$ and to integrability exponents in the Banach space range.
Let $\mu$ be a nontrivial, regular measure on a $\gr$ with $\mu(K)<\infty$ for all compact sets $K\subseteq \gr$.
Then the space of continuous functions with compact support $\CRc{}(\gr)$ is dense in $\LR{p}(\gr)$ for all $p\in[1,\infty)$, see Appendix E.8 of \cite{Rud62}.

\medskip

We will work with a basis of sets which enables us to talk about a doubling property of the measure $\mu$ with respect to this basis.
We therefore make the following assumption, which is met by the groups $\R$, $\Z$, the torus $\mathbb{T}$ and finite products of these groups.
\begin{assumption}\label{mw_ass_ball}
 Suppose that $\gr$ is a locally compact abelian group equipped with a nontrivial and regular measure $\mu$, such that $\mu(K)<\infty$ for all compact $K\subseteq \gr$.
 Furthermore, assume that there is a local base of $0\in \gr$ consisting of relatively compact measurable neighbourhoods $U_k$, $k\in\Z$, symmetric in the sense $U_k=-U_k$, such that
 \begin{enumerate}
  \item\label{mw_ass_ball_i} $\bigcup\limits_{k\in\Z}U_k=\gr$,
  \item\label{mw_ass_ball_ii} $U_k\subseteq U_m$, if $k\leq m$,
  \item\label{mw_ass_ball_iii} there exist a positive constant $A$ and a non-decreasing mapping $\theta:\Z\to\Z$ such that for all $k\in\Z$ and all $x\in \gr$
  \begin{align*}
   k&<\theta(k), \\
   2U_k&\subseteq U_{\theta(k)}, \\
   \mu(x+U_{\theta(k)})&\leq A\mu(x+U_{k}).
  \end{align*}
  Observe that necessarily $A\geq 1$ because $U_k\subseteq U_{\theta(k)}$.
 \end{enumerate}
\end{assumption}
 
 \medskip

In \cite{Sau15a}, it was shown that for $p\in(1,\infty)$ weights with $[\om]_p<\infty$ are exactly those weights such that the maximal operator is bounded in $L_{\om}^{p}(\gr)$.
Here, we extend this result to the endpoint $p=\infty$.
\begin{proposition}\label{mw_muckenhoupt_main}
 Assume $p\in(1,\infty]$ and let $\om$ be a weight on $\gr$.
 Then $\mathcal{M}_{\gr}$ is bounded in $L_{\om}^{p}(\gr)$ if and only if $[\om]_p<\infty$.
 Moreover, there exists $\cA(p)\in \R$ such that the bound $\|\mathcal{M}_{\gr}\|_{\LR{p}_\om(\gr)\to\LR{p}_\om(\gr)}\le \cA(p)[\om]_p^{p'}$ is valid.
\end{proposition}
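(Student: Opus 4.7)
The plan is to split the argument according to whether $p=\infty$ or $p\in(1,\infty)$; the endpoint is essentially free, while the finite range is a quantitative strengthening of \cite{Sau15a}.

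For $p=\infty$ I would apply Proposition \ref{mw_weight_prop}\ref{mw_weight_prop_v} with $\Om=\om$, which reads $\|\calm_\gr\|_{\LR{\infty}_\om(\gr)\to\LR{\infty}_\om(\gr)}=[\om]_{(\infty,1)}=[\om]_\infty$. Since $\infty'=1$, both the equivalence and the quantitative bound are immediate with $\cA(\infty):=1$. No further argument is needed at the endpoint.

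For $p\in(1,\infty)$ the qualitative equivalence is exactly the content of \cite{Sau15a}, so only the explicit Buckley-type dependence $[\om]_p^{p'}$ has to be tracked. I would pass to $\sigma:=\om^p$, giving the isometry $\LR{p}_\om(\gr)=\LR{p}(\sigma\,\mathrm{d}\mu)$; a short calculation using the traditional normalisation shows that $[\sigma]_{A_p}=[\om]_p^p$. The target inequality thereby reduces to Buckley's sharp bound $\|\calm_\gr\|_{\LR{p}(\sigma)\to\LR{p}(\sigma)}\le C_p[\sigma]_{A_p}^{1/(p-1)}$, and the exponent matches: $(p-1)^{-1}\cdot p=p'$. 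Buckley's bound itself is then obtained by adapting the classical proof to the basis $\calu=\{x+U_k\}$. The Vitali-type covering available from $2U_k\subseteq U_{\theta(k)}$ and $\mu(x+U_{\theta(k)})\le A\mu(x+U_k)$ in Assumption \ref{mw_ass_ball}\ref{mw_ass_ball_iii} yields a sparse domination of $\calm_\gr f$ by an averaging operator over $\calu$, and the weighted Carleson embedding theorem extracts the exponent $p'$ on $[\om]_p$ in the final constant.

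The main obstacle is the absence of an intrinsic dyadic grid on $\gr$: the basis $(U_k)$ is not closed under intersection and is only weakly doubling, so the usual stopping-time/sparse construction cannot be copied verbatim. I would resolve this either by building an adjacent dyadic system in the spirit of Hyt\"onen's construction for spaces of homogeneous type and executing the sparse domination there, or by performing the selection argument directly using the nesting $2U_k\subseteq U_{\theta(k)}$. Either route affects only the constant, so the resulting $\cA(p)$ depends on $p$, on the doubling constant $A$, and on the mapping $\theta$.
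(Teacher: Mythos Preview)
Your treatment of $p=\infty$ via Proposition \ref{mw_weight_prop}\ref{mw_weight_prop_v} is exactly what the paper does, with $\cA(\infty)=1$.

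For $p\in(1,\infty)$ your reduction is correct, but the paper does not reprove Buckley's bound in the LCA setting: it simply cites Theorem 1.3 of Paternostro--Rela \cite{PaR19}, which establishes precisely the estimate $\|\calm_\gr\|_{\LR{p}(\sigma)\to\LR{p}(\sigma)}\le C_p[\sigma]_{A_p}^{1/(p-1)}$ for groups satisfying Assumption \ref{mw_ass_ball}. Your sketch via sparse domination and an adjacent dyadic construction \`a la Hyt\"onen is a viable route (and is essentially how \cite{PaR19} proceeds), so there is no error; you are just proposing to redo work already available in the literature. The one place to be careful if you did carry this out is that Assumption \ref{mw_ass_ball} gives a doubling \emph{basis} rather than a metric, so the Hyt\"onen--Kairema construction would need to be phrased accordingly---but again, \cite{PaR19} has already handled this.
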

\begin{proof}
For $p\in (1,\infty)$, the proof can be found in Theorem 1.2 of \cite{Sau15a}, with the explicit bound proved in Theorem 1.3 of \cite{PaR19}, where we recall that $[\om^p]_{A_p}=[\om]_p^p$.
For $p=\infty$, Proposition \ref{mw_weight_prop}\ref{mw_weight_prop_v} gives the result with $\cA(\infty)=1$.
\end{proof}

\begin{definition}\label{js104}
 Let ${\gr}$ be a locally compact abelian group with Haar measure $\mu$ subject to Assumption \ref{mw_ass_ball}.
 We say that $f\in \Loq{}{\infty}({\gr})$ is rapidly decaying and write $f\in \Loq{\mathrm{dec}}{\infty}(\gr)$ if
 \begin{align}\label{mw_rap_dec_est}
  (\forall n\in \N)(\exists c_n>0)(\forall k\in\N) \quad \|A^{nk}f\|_{\Loq{}{\infty}({\gr}\setminus U_{\theta^k(0)})}\leq c_n.
 \end{align}
 For each $n\in \N$ we define $\rho_n:\LR{\infty}(\gr)\to \R$ via
 \begin{align*}
 \rho_n(f):=\max\set{\|f\|_{\LR{\infty}(U_0)},\sup_{k\in\N}\|A^{nk} f\|_{\LR{\infty}(\gr\setminus U_{\theta^k(0)})}}.
 \end{align*}
\end{definition}
 It follows immediately from the definition that for $f\in \Loq{\mathrm{dec}}{\infty}({\gr})$ it holds ${f(\cdot-y)\in \Loq{\mathrm{dec}}{\infty}({\gr})}$ for all $y\in {\gr}$, and $\Loq{\mathrm{dec}}{\infty}({\gr})\Loq{}{\infty}({\gr})\subseteq \Loq{\mathrm{dec}}{\infty}({\gr})$, so that $\Loq{\mathrm{dec}}{\infty}({\gr})$ is closed under pointwise multiplication.
\begin{lemma}\label{js41}
 We collect the following properties of $\Loq{\mathrm{dec}}{\infty}({\gr})$.
 \begin{enumerate}
  \item For all $n\in\N$, $p\in (\frac1n,\infty]$, and weights $\om$ with $[\om]_{p,r}<\infty$, $r\in [(np)'/n,\infty]$, there holds
  \begin{align*}
  \|f\|_{\LR{p}_{\om}(\gr)}\le A[\om]_{p,r}^{(np)'} \|\om\|_{\LR{p}(U_0)} \rho_n(f),
  \end{align*}
   so that $\Loq{\mathrm{dec}}{\infty}({\gr})\subseteq \LR{p}_{\om}({\gr})$.
  A particular instance is $n=2$, $p\in [1,\infty)$ and $[\om]_p<\infty$.
  Furthermore for each $n\in\N$ there is a constant $c_n>0$ such that for all $k\in\N$ it holds
   \begin{align*}
    \|A^{nk} f\|_{\LR{1}({\gr}\setminus U_{\theta^k(0)})}\leq c_n.
   \end{align*}
  \item $\Loq{\mathrm{dec}}{\infty}({\gr})$ constitutes a convolution algebra, \emph{i.e.}, for $f,g\in \Loq{\mathrm{dec}}{\infty}(\gr)$ it holds true that ${f\ast g\in \Loq{\mathrm{dec}}{\infty}(\gr)}$, where
  \begin{align*}
   (f\ast g)(x):=\myint{\gr}{f(x-y)g(y)}{\mu(y)}.
  \end{align*}
 \end{enumerate}
\end{lemma}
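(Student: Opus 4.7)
For part (i), the key pointwise observation is that $|f(x)|\le A^n\rho_n(f)(\calm\1_{U_0}(x))^n$ for every $x\in\gr$. Indeed, on $U_0$ one has $\calm\1_{U_0}(x)\ge 1$ and $|f|\le\rho_n(f)$, whereas for $x\in U_{\theta^{k+1}(0)}\setminus U_{\theta^k(0)}$ with $k\ge 0$ the doubling property from Assumption \ref{mw_ass_ball} yields $\calm\1_{U_0}(x)\ge \mu(U_0)/\mu(U_{\theta^{k+1}(0)})\ge A^{-(k+1)}$, while the definition of $\rho_n$ gives $|f(x)|\le A^{-nk}\rho_n(f)\le A^n\rho_n(f)(\calm\1_{U_0}(x))^n$.

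Next, I rewrite $\|(\calm\1_{U_0})^n\|_{\LR{p}_\om}^p=\|\calm\1_{U_0}\|_{\LR{np}_{\om^{1/n}}}^{np}$ to reduce the estimate to a maximal function bound on $\LR{np}_{\om^{1/n}}$. Proposition \ref{mw_weight_prop} applied with $\alpha=1/n$ gives $[\om^{1/n}]_{(np,(np)')}=[\om]_{(p,(np)'/n)}^{1/n}$, and the monotonicity in the same proposition yields $[\om]_{(p,(np)'/n)}\le [\om]_{(p,r)}$ whenever $r\ge (np)'/n$. Combining this with Proposition \ref{mw_muckenhoupt_main} gives $\|\calm\1_{U_0}\|_{\LR{np}_{\om^{1/n}}}\le \cA(np)[\om]_{p,r}^{(np)'/n}\|\om\|_{\LR{p}(U_0)}^{1/n}$, and raising to the $n$-th power produces the claim with the exponent $(np)'$ on $[\om]_{p,r}$. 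For the uniform $L^1$-estimate $\|A^{nk}f\|_{\LR{1}(\gr\setminus U_{\theta^k(0)})}\le c_n$, the plan is to decompose $\gr\setminus U_{\theta^k(0)}$ into the annular shells $U_{\theta^{k+j+1}(0)}\setminus U_{\theta^{k+j}(0)}$ for $j\ge 0$ and to apply the decay of $f$ at the enhanced rate $n+1$: each shell contributes at most $A^{nk-(n+1)(k+j)}\mu(U_{\theta^{k+j+1}(0)})\rho_{n+1}(f)\le A\,A^{-k-nj}\mu(U_0)\rho_{n+1}(f)$ by doubling, and summing over $j\ge 0$ yields a geometric series with bound independent of $k$.

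For part (ii), fix $m\in\N$ and $x\in\gr\setminus U_{\theta^m(0)}$ and split the convolution integral at $U_{\theta^{m-1}(0)}$. The inclusion $2U_{\theta^{m-1}(0)}\subseteq U_{\theta^m(0)}$ forces that $y\in U_{\theta^{m-1}(0)}$ implies $x-y\notin U_{\theta^{m-1}(0)}$, since otherwise $x=y+(x-y)\in 2U_{\theta^{m-1}(0)}\subseteq U_{\theta^m(0)}$, contradicting the choice of $x$. Consequently, the first integral is bounded by $A^{-n(m-1)}\rho_n(f)\|g\|_{\LR{1}(\gr)}$ and the second by $A^{-n(m-1)}\rho_n(g)\|f\|_{\LR{1}(\gr)}$, both finite by part (i). Multiplying by $A^{nm}$ absorbs the extra factor and yields a bound uniform in $m$, which shows $f\ast g\in\Loq{\mathrm{dec}}{\infty}(\gr)$.

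The main obstacle is obtaining the correct exponent $(np)'$ on the weight characteristic while covering the full range $r\ge(np)'/n$: a naive annulus-by-annulus summation using $[\om]_{p,r}$ and doubling yields a geometric series that converges only for $r$ strictly larger than $(np)'/n$ and produces a weaker exponent. The maximal-function detour above sidesteps this by compressing the whole sum into a single application of Proposition \ref{mw_muckenhoupt_main} at the rescaled exponents $np$ and the rescaled weight $\om^{1/n}$, which together with the scaling and monotonicity identities of Proposition \ref{mw_weight_prop} automatically captures the boundary case of the allowed range for $r$.
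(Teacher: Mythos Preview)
Your argument for the weighted inclusion is essentially identical to the paper's: both establish the pointwise bound $|f|\lesssim (\calm\1_{U_0})^n\rho_n(f)$ (the paper phrases this via an auxiliary step function $P$ with $P^{1/n}\le A\calm\1_{U_0}$, but the content is the same), then pass to $\LR{np}_{\om^{1/n}}$ and invoke Proposition~\ref{mw_muckenhoupt_main} together with the scaling and monotonicity identities of Proposition~\ref{mw_weight_prop}. The only cosmetic difference is that the paper uses the translated set $x+U_{\theta^{k+1}(0)}$ in the maximal function lower bound, whereas you use $U_{\theta^{k+1}(0)}$ directly; both work.

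For the $\LR{1}$-tail estimate and for part~(ii) the paper simply defers to \cite{Osb75}, so your explicit arguments are a genuine addition. They are correct in outline, with one small arithmetic slip: in the shell contribution you write $A\,A^{-k-nj}$, but the exponents actually combine to $A\cdot A^{-nj}$ (the $k$'s cancel: $nk-(n+1)(k+j)+(k+j+1)=1-nj$). This is harmless for the conclusion, since the resulting geometric series $\sum_{j\ge 0}A^{-nj}$ still converges and is independent of $k$.
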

\begin{proof}
 All points except the inclusion $\Loq{\mathrm{dec}}{\infty}({\gr})\subseteq \Loq{\om}{q}({\gr})$ can be found in \cite{Osb75}.
 In order to prove the inclusion, fix $q\in [1,\infty]$, a weight $\om$ with $[\om]_q<\infty$, and $f\in \Loq{\mathrm{dec}}{\infty}({\gr})$.
 Define $P:\gr\to \R$ via
 \begin{align*}
 P(x):=
 \begin{cases}
 A^{-nk} & \tif x\in U_{\theta^k(0)}\setminus U_{\theta^{k-1}(0)}, k\ge 1,\\
 1 & \tif x\in U_0.
 \end{cases}
 \end{align*}
 We observe that the definition of $\rho_n$ ensures that
 \begin{align*}
 \|f\cdot P^{-1}\|_{\LR{\infty}(\gr)}\le \rho_n(f).
 \end{align*}
 We now argue that there is $C>0$ such that $\sqrt[n]{P(x)}\le A \calm_\gr \psi_0(x)$ for all $x\in \gr$, where $\psi_0\in \LR{1}(\gr)\cap \LR{\infty}(\gr)$ is defined as $\psi_0:=\chi_{U_0}$.
 This is clear for $x\in U_0$ by $A\ge 1$, so that we may assume $x\in U_{\theta^k(0)}\setminus U_{\theta^{k-1}(0)}$ for some $k\ge 1$.
 Then $U_0\subseteq x+U_{\theta^{k+1}(0)}$ in light of
 \begin{align*}
 U_0=x-x+U_0\subseteq x + U_{\theta^k(0)} + U_{\theta^k(0)}\subseteq x+U_{\theta^{k+1}(0)}.
 \end{align*}
Since $\mu(x+U_{\theta^{k+1}(0)})\le A^{k+1}\mu(U_{0})$ by translation invariance of the Haar measure, we obtain
 \begin{align*}
 \sqrt[n]{P(x)} &=\frac1{A^{k}\mu(U_0)}\int_{U_0} 1 \dd\mu = \frac1{A^{k}\mu(U_0)}\int_{x+U_{\theta^{k+1}(k_0)}} \psi_0 \dd \mu \\
& \le \frac A{\mu(x+U_{\theta^{k+1}(k_0)})} \int_{x+U_{\theta^{k+1}(k_0)}} \psi_0 \dd \mu \le A\calm_\gr \psi_0(x).
 \end{align*}
With $\Om:=\sqrt[n]{\om}$ it thus follows
 \begin{align*}
 \|f\|_{\LR{p}_\om(\gr)}&\le \|f \cdot P^{-1}\|_{\LR{\infty}(\gr)} \|P\|_{\LR{p}_\om(\gr)} \le \rho_n(f) \|\sqrt[n]{P}\|_{\LR{np}_\Om(\gr)}^n \le A \rho_n(f) \|\calm_\gr \psi_0\|_{\LR{np}_\Om(\gr)}^n \\
 &\le A \rho_n(f)  \|\calm_\gr\|_{\LR{np}_\Om(\gr)\to\LR{np}_\Om(\gr)}^n\|\psi_0\|_{\LR{np}_\Om(\gr)}^n = A\rho_n(f) \|\calm_\gr\|_{\LR{np}_\Om(\gr)\to\LR{np}_\Om(\gr)}^n \|\om\|_{\LR{p}(U_0)}.
 \end{align*}
 This yields the assertion, since $\|\calm_\gr\|_{\LR{np}_\Om(\gr)\to\LR{np}_\Om(\gr)}^n\le [\Om]_{np}^{n(np)'}= [\om]_{p,(np)'/n}^{(np)'}\le [\om]_{p,r}^{(np)'}$ by Proposition \ref{mw_muckenhoupt_main} and the general properties of weights in Proposition \ref{mw_weight_prop}.
\end{proof}
We introduce the Fourier transform $\calf:\LR{1}(\gr)\to C_0(\widehat{\gr})$, where $\widehat{\gr}$ is the Pontryagin dual group of $\gr$.
Then $\calf:\LR{1}(\gr)\cap \LR{2}(\gr)\to C_0(\widehat{\gr})$ extends to an isometric isomorphism $\calf:\LR{2}(\gr)\to \LR{2}(\widetilde\gr)$.
We usually write $\hat f:=\calf f$ for $f\in \LR{1}(\gr)$.
We will also use the notation $\calf \LR{1}(\widehat\gr):=\set{\hat f: f\in \LR{1}(\gr)}$.
For $m\in \LR{\infty}(\widehat\gr)$ and $f\in \LR{2}(\gr)$ we introduce $T_m f:=\calf^{-1} m\hat f$.
For $p\in [1,\infty]$, $m\in \LR{\infty}(\widehat\gr)$, and a weight $\om$ on $\gr$ with $[\om]_p<\infty$ we write $m\in M_{p,\om}(\widehat{\gr})$ if $T_m$ originally defined on $\LR{p}_\om(\gr)\cap \LR{2}(\gr)$ extends to a bounded linear operator on $\LR{p}_\om(\gr)$.
In that case we write $\|m\|_{M_{p,\om}(\widehat{\gr}}:=\|T_m\|_{\LR{p}_\om(\gr)\to\LR{p}_\om(\gr)}$.
Consider
\begin{align*}
A(\gr)&:=\setc{f\in \LR{\infty}_{\mathrm{dec}}(\gr)}{\hat f \text{ has compact support}}.
\end{align*}
In order to have good approximation results, we assume the following on $\gr$.
Observe that by Lemma \ref{js41} we have $A(\gr)\subseteq \LR{p}_\om(\gr)$ for all $p\in (1,\infty)$ and all weights $\om$ with $[\om]_p<\infty$.
\begin{assumption}\label{ass_3}
There exist $c\in \R$ and a sequence $\seqkN{\varphi}\subseteq A(\gr)$ such that for all $f\in \LR{\infty}_{\mathrm{dec}}(\gr)$ it holds $\sup_{k\in\N}|\varphi_k\ast f|\le c\calm f$, $0\le \widehat{\varphi}_k\le 1$ and $\widehat{\varphi}_k \to 1$ locally uniformly.
\end{assumption}
\begin{lemma}\label{js40}
Let $\gr$ be a group subject to Assumptions \ref{mw_ass_ball} and \ref{ass_3}.
Then for all $p\in (1,\infty)$ and weights $\om$ with $[\om]_p<\infty$, it holds that $A(\gr)$ is a dense subset of $\LR{p}_\om(\gr)$.
\end{lemma}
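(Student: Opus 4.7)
The plan is to approximate any $f\in\LR{p}_\om(\gr)$ first by bounded, compactly supported functions, and then to approximate each such function $g$ by convolutions $\varphi_k\ast g$, which I will verify lie in $A(\gr)$ and converge to $g$ in $\LR{p}_\om(\gr)$.

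For the first step, I would show that compactly supported bounded functions are contained and dense in $\LR{p}_\om(\gr)$. If $g\in \LR{\infty}(\gr)$ is supported in some $U_n$, then $\|g\|_{\LR{p}_\om}\le \|g\|_{\LR\infty}\|\om\|_{\LR{p}(U_n)}$, and $\|\om\|_{\LR{p}(U_n)}$ is finite because $[\om]_p<\infty$ forces $\|\om\|_{\LR{p}(U_n)}\|\om^{-1}\|_{\LR{p'}(U_n)}\le [\om]_p\mu(U_n)$, with $\|\om^{-1}\|_{\LR{p'}(U_n)}>0$ since $\om>0$ on the positive-measure set $U_n$. For density, given $f\in\LR{p}_\om(\gr)$, the truncations $f_n:=f\,\1_{\{|f|\le n\}\cap U_n}$ are bounded and compactly supported, converge to $f$ pointwise almost everywhere (since $\bigcup_n U_n=\gr$), and are dominated by $|f|$, so dominated convergence gives $f_n\to f$ in $\LR{p}_\om(\gr)$.

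For the second step, fix $g\in\LR{\infty}(\gr)$ with $\supp g$ compact. Such a $g$ lies in $\Loq{\mathrm{dec}}{\infty}(\gr)$ because the bound in \eqref{mw_rap_dec_est} is trivially satisfied once $k$ is so large that $\supp g\subseteq U_{\theta^k(0)}$. Hence by the convolution-algebra property in Lemma \ref{js41}, $\varphi_k\ast g\in \Loq{\mathrm{dec}}{\infty}(\gr)$, and $\widehat{\varphi_k\ast g}=\widehat{\varphi_k}\widehat{g}$ has compact support because $\widehat{\varphi_k}$ does, so $\varphi_k\ast g\in A(\gr)$. To prove $\varphi_k\ast g\to g$ in $\LR{p}_\om(\gr)$, I would combine two ingredients: Assumption \ref{ass_3} yields the pointwise domination $|\varphi_k\ast g|\le c\,\calm g$, and Proposition \ref{mw_muckenhoupt_main} places $\calm g$ in $\LR{p}_\om(\gr)$; meanwhile, since $g\in\LR{2}(\gr)$, Plancherel together with dominated convergence on $\widehat{\gr}$, using $|\widehat{\varphi_k}-1|\le 1$, $\widehat{g}\in\LR{2}(\widehat{\gr})$, and the pointwise convergence $\widehat{\varphi_k}\to 1$, gives $\varphi_k\ast g\to g$ in $\LR{2}(\gr)$. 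Thus some subsequence converges pointwise almost everywhere, and dominated convergence on $\LR{p}_\om(\gr)$ with dominator $c\,\calm g+|g|$ yields convergence of that subsequence in $\LR{p}_\om(\gr)$. A standard subsequence-of-every-subsequence argument then promotes this to convergence of the full sequence.

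The step I expect to require the most care is weaving together the $\LR{2}$-convergence (which alone gives no weighted estimate) with the weighted domination $|\varphi_k\ast g|\le c\,\calm g$ (which alone gives no pointwise convergence). Extracting an almost-everywhere-convergent subsequence bridges the two, and dominated convergence in $\LR{p}_\om(\gr)$ then closes the argument.
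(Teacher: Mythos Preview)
Your proof is correct and follows essentially the same strategy as the paper: reduce to compactly supported bounded functions, then approximate by $\varphi_k\ast g\in A(\gr)$ using the domination $|\varphi_k\ast g|\le c\,\calm g$ together with $\LR{2}$-convergence via Plancherel. The only cosmetic differences are that the paper invokes density of $\CRc{}(\gr)$ via regularity of $\om^p\dd\mu$ rather than your explicit truncation, and passes from $\LR{2}$-convergence to convergence in measure (using the measure-convergence form of dominated convergence) instead of your subsequence extraction.
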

\begin{proof}
Let $f\in \LR{p}_\om(\gr)$.
Since $\om^p\dd \mu$ is a regular measure on $\gr$ by dominated convergence, $\CRc{}(\gr)$ is dense in $\LR{p}_\om(\gr)$, see \cite[Appendix E.8]{Rud62}.
Hence, it suffices to assume that $f\in \CRc{}(\gr)\subseteq \LR{\infty}_{\mathrm{dec}}(\gr)$.
By assumption there are $c\in \R$ and $\seqkN{\varphi}\subseteq A(\gr)$ such that
\begin{align*}
\sup_{k\in\N}|\varphi_k\ast f|\le c\calm f \quad \text{and} \quad \widehat{\varphi}_k \to 1  \quad \text{locally uniformly.}
\end{align*}
Observe that $\varphi_k \ast f\in A(\gr)$ for all $k\in\N$, since $\LR{\infty}_{\mathrm{dec}}(\gr)$ is a convolution algebra and $\calf[\varphi_k\ast f]=\widehat\varphi_k \widehat f$ is compactly supported.
Moreover $\|\varphi_k\ast f - f\|_{\LR{2}(\gr)}=\|(\widehat \varphi_k-1)\widehat f\|_{\LR{2}(\gr)}\to 0$ by Plancherel's theorem and dominated convergence, so that in particular $\varphi_k\ast f \stackrel{\mu}{\longrightarrow} f$ in measure.
Since $[\om]_{p}<\infty$, we have $\calm f\in \LR{p}_\om(\gr)$ by Proposition \ref{mw_muckenhoupt_main}.
Thus the result follows by dominated convergence.
\end{proof}

From now on, we will assume that $\gr$ satisfies Assumptions \ref{mw_ass_ball} and \ref{ass_3}.
\begin{proposition}\label{js103}
Let $p\in(1,\infty)$, and assume that $\om$ is a weight on $\gr$ with $[\om]_p<\infty$.
Let $m\in \LR{\infty}(\widehat \gr)$.
Then $m\in M_{p,\om}(\widehat{\gr})$ if and only if there is $c\in\R$ such that for all $f,g\in A(\gr)$ we have
\begin{align}\label{js100a}
\left|\int_{\widehat{\gr}}m(\gamma)\widehat{f}(\gamma)\widehat{g}(\gamma)\mathrm{d}\gamma\right|\le c\|f\|_{L_\om^p(\gr)}\|g\|_{L_{\sigma}^{p'}(\gr)},
\end{align}
where $\sigma(x):=\om^{{-1}}(-x)$.
In this case $\|m\|_{M_{p,\om}(\widehat{\gr})}$ is the smallest constant such that \eqref{js100a} is valid.
\end{proposition}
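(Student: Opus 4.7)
The plan is to reduce \eqref{js100a} to a weighted norm estimate for $T_m$ via the identity
\[
\int_{\widehat{\gr}} m(\gamma)\widehat f(\gamma)\widehat g(\gamma)\, d\gamma = \int_{\gr} (T_m f)(y)\, g(-y)\, d\mu(y),
\]
valid for all $f,g\in A(\gr)$. To establish it I would note that $m\widehat f\widehat g$ is compactly supported and bounded, so $T_m(f\ast g)=\calf^{-1}(m\widehat f\widehat g)$ is continuous; that $(T_m f)\ast g$ is continuous as the convolution of two $L^2$-functions; and that both agree in $L^2$ since each has Fourier transform $m\widehat f\widehat g$. Evaluating the common continuous representative at $0$ then produces the two sides of the identity, via Fourier inversion on the left and the definition of convolution on the right.

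For the forward direction, assume $m\in M_{p,\om}(\widehat\gr)$. H\"older's inequality applied to the right-hand side of the identity via the pairing between $\LR{p}_\om(\gr)$ and $\LR{p'}_{\om^{-1}}(\gr)$, combined with the change of variables $z=-y$ that converts $\|g(-\cdot)\|_{\LR{p'}_{\om^{-1}}(\gr)}$ into $\|g\|_{\LR{p'}_\sigma(\gr)}$, yields
\[
\left|\int_{\widehat\gr}m\widehat f\widehat g\, d\gamma\right|\le \|T_m f\|_{\LR{p}_\om}\|g\|_{\LR{p'}_\sigma}\le \|m\|_{M_{p,\om}(\widehat\gr)}\|f\|_{\LR{p}_\om}\|g\|_{\LR{p'}_\sigma},
\]
so \eqref{js100a} holds with $c\le\|m\|_{M_{p,\om}(\widehat\gr)}$.

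For the converse, suppose \eqref{js100a} holds and fix $f,h\in A(\gr)$. The function $g(y):=h(-y)$ again lies in $A(\gr)$ because Assumption \ref{mw_ass_ball} ensures the $U_k$ are symmetric, so the rapid-decay condition is preserved and the Fourier transform of $g$ is just $\widehat h(-\cdot)$ with compact support. Inserting this $g$ into the identity exhibits $\int_{\gr}(T_m f)h\, d\mu$ as an instance of the Fourier-side bilinear form, and \eqref{js100a} together with $\|g\|_{\LR{p'}_\sigma}=\|h\|_{\LR{p'}_{\om^{-1}}}$ gives
\[
\left|\int_{\gr}(T_m f)h\, d\mu\right|\le c\,\|f\|_{\LR{p}_\om(\gr)}\|h\|_{\LR{p'}_{\om^{-1}}(\gr)}\qquad\text{for every }h\in A(\gr).
\]
Proposition \ref{mw_weight_prop}\ref{mw_weight_prop_i} yields $[\om^{-1}]_{p'}=[\om]_p<\infty$, so Lemma \ref{js40} provides density of $A(\gr)$ in $\LR{p'}_{\om^{-1}}(\gr)$. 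The unweighted duality $\LR{p'}_{\om^{-1}}(\gr)^*=\LR{p}_\om(\gr)$ then promotes the above to $\|T_m f\|_{\LR{p}_\om}\le c\|f\|_{\LR{p}_\om}$ on $A(\gr)$, and density of $A(\gr)$ in $\LR{p}_\om(\gr)$ (again Lemma \ref{js40}) produces the bounded extension of $T_m$, showing $m\in M_{p,\om}(\widehat\gr)$ with norm at most $c$. Matching the two inequalities identifies the optimal $c$ with $\|m\|_{M_{p,\om}(\widehat\gr)}$. The main technical point is the first identity: everything hinges on the compact Fourier support in $A(\gr)$, which is exactly what makes $T_m(f\ast g)$ continuous and hence pointwise-evaluable at $0$.
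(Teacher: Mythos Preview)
Your proof is correct and follows essentially the same route as the paper: both hinge on the Parseval-type identity $\int_{\widehat\gr} m\widehat f\widehat g=\int_\gr (T_m f)(x)g(-x)\,d\mu(x)$, then combine H\"older and the density of $A(\gr)$ in $\LR{p}_\om(\gr)$ and $\LR{p'}_{\om^{-1}}(\gr)$ (Lemma~\ref{js40}) together with $[\om^{-1}]_{p'}=[\om]_p$. Your justification of the key identity via continuity and evaluation at $0$, and your remark that $h(-\cdot)\in A(\gr)$ thanks to the symmetry $U_k=-U_k$, are more explicit than the paper's bare appeal to Parseval, but the argument is otherwise the same.
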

\begin{proof}
Let $m\in M_{p,\om}(\widehat{\gr})$.
By Parseval's theorem and H\"older's inequality we have for all $f,g\in A(\gr)$
\begin{align*}
\left|\int_{\widehat{\gr}}m(\gamma)\widehat{f}(\gamma)\widehat{g}(\gamma)\mathrm{d}\gamma\right| &= \left|\int_{\gr} T_m f(x) g(-x)\mathrm{d}x\right| \\
&\le \|T_mf\|_{L_\om^p(\gr)}\|g\|_{L_{\sigma}^{p'}(\gr)} \\
&\le \|m\|_{M_{p,\om}(\widehat{\gr})}\|f\|_{L_\om^p(\gr)}\|g\|_{L_{\sigma}^{p'}(\gr)}.
\end{align*}
This shows estimate \eqref{js100a} with $c\le \|m\|_{M_{p,\om}(\widehat{\gr})}$.

\medskip

Conversely, let $c>0$ be such that estimate \eqref{js100a} is valid and let $f,g\in A(\gr)$.
Write $\tilde g(x):=g(-x)$.
By Parseval's formula it holds
\begin{align*}
\int_\gr T_mf(x) g(x) \dd x = \int_{\widehat{\gr}} m(\gamma) \widehat f(\gamma)\widehat {\tilde g}(\gamma)\dd\gamma,
\end{align*}
and thus
\begin{align*}
\left|\int_\gr T_mf(x)g(x)\dd x\right|\le c\|f\|_{\LR{p}_\om(\gr)}\|\tilde g\|_{\LR{p'}_\sigma(\gr)}=c\|f\|_{\LR{p}_\om(\gr)}\|g\|_{\LR{p'}_{\om^{-1}}(\gr)}
\end{align*}
for all $f,g\in A(\gr)$.
Since $[\om^{-1}]_{p'}=[\om]_p<\infty$, $A(\gr)$ is dense in $\LR{p}_\om(\gr)$ and $\LR{p'}_{\om^{-1}}(\gr)$ by Lemma \ref{js40}.
It follows that $T_m$ is bounded in $\LR{p}_\om(\gr)$ with $\|T_m\|_{\LR{p}_\om(\gr)\to\LR{p}_\om(\gr)}\le c$.
This shows the result.
\end{proof}

\begin{definition}
Let $\gr$ and $\grpH$ be locally compact abelian groups. Assume that $\Phi:\dualgrpH\to\dualgrp$ is a continuous homomorphism. By virtue of Pontryagin's duality theorem we may define the \emph{dual homomorphism of} $\Phi$ as the function $\widehat\Phi:\gr\to \grpH$ that satisfies
\begin{align}
(\widehat\Phi(x),\chi)=(x,\Phi(\chi)) \qquad \mbox{for all } x\in \gr, \chi\in \dualgrpH.
\end{align}
\end{definition}

For $f\in \LR{1}_{\mathrm{loc}}(\gr)$ and $z\in \gr$ we write $\tau_z f:=f(\cdot-z)$. 
\begin{proposition}\label{js100}
Let $\gr$ and $\grpH$ be locally compact abelian groups subject to Assumptions \ref{mw_ass_ball} and \ref{ass_3}.
Let $\Phi:\dualgrpH\to\dualgrp$ be a continuous homomorphism. Let $p\in(1,\infty)$ and suppose that $\om:\grpH\to(0,\infty)$ is a weight such that $[\om]_{p}<\infty$, $[\om\circ\widehat{\Phi}]_p<\infty$, and there is $c>0$ with $\|\cdot\|_{M_{p,\tau_u\om\circ\widehat\Phi}(\dualgrp)}\le c\|\cdot\|_{M_{p,\om\circ\widehat\Phi}(\dualgrp)}$ for all $u\in \grpH$.
If $m\in M_{p,\om\circ\widehat\Phi}(\dualgrp)\cap C(\dualgrp)$, then there holds $m\circ\Phi\in M_{p,\om}(\dualgrpH)$ and
\begin{align*}
\|m\circ\Phi\|_{M_{p,\om}(\dualgrpH)}\le c\|m\|_{M_{p,\om\circ\widehat\Phi}(\dualgrp)}.
\end{align*}
\end{proposition}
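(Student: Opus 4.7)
My plan is to adapt the classical Edwards--Gaudry transference strategy to the weighted setting, using the uniform-in-$u$ hypothesis on the multiplier norms to control weight translates. First, by Proposition~\ref{js103}, it suffices to establish the bilinear estimate
\[
\Bigl|\int_{\dualgrpH}(m\circ\Phi)(\chi)\widehat{f}(\chi)\widehat{g}(\chi)\,d\chi\Bigr|
\le c\,\|m\|_{M_{p,\om\circ\widehat{\Phi}}(\dualgrp)}\,\|f\|_{\LR{p}_\om(\grpH)}\|g\|_{\LR{p'}_\sigma(\grpH)}
\]
for all $f,g\in A(\grpH)$, where $\sigma(x):=\om(-x)^{-1}$. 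Because $[\sigma]_{p'}=[\om]_p<\infty$, $A(\grpH)$ is dense in both weighted spaces by Lemma~\ref{js40}.

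Next I would transfer the test functions to $\gr$ and apply the multiplier bound there. Fix a window $\psi\in A(\gr)$ with $\widehat{\psi}\ge 0$ of compact support and write $\tilde\psi(x):=\psi(-x)$. For each parameter $u\in\grpH$, set
\[
F_u(x):=f(\widehat{\Phi}(x)+u)\psi(x), \qquad G_u(x):=g(\widehat{\Phi}(x)+u)\tilde\psi(x).
\]
Both lie in $A(\gr)$: rapid decay follows from $\psi,\tilde\psi\in\LR{\infty}_{\mathrm{dec}}(\gr)$ combined with boundedness of $f,g$, and a direct calculation yields $\widehat{F_u}(\gamma)=\int_{\dualgrpH}\widehat{f}(\chi)(u,\chi)\widehat{\psi}(\gamma-\Phi(\chi))d\chi$ (analogously for $\widehat{G_u}$), which has compact support since $\widehat f, \widehat g, \widehat\psi$ do. Applying Proposition~\ref{js103} on $\gr$ with the translated weight $\tau_{-u}\om\circ\widehat{\Phi}$ and the standing hypothesis gives
\[
\Bigl|\int_{\dualgrp}m(\gamma)\widehat{F_u}(\gamma)\widehat{G_u}(\gamma)\,d\gamma\Bigr|
\le c\,\|m\|_{M_{p,\om\circ\widehat{\Phi}}}\|F_u\|_{\LR{p}_{\tau_{-u}\om\circ\widehat{\Phi}}(\gr)}\|G_u\|_{\LR{p'}_{\sigma_{-u}}(\gr)},
\]
with $\sigma_{-u}(x):=\om(-\widehat{\Phi}(x)+u)^{-1}$. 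Integrating this inequality against a nonnegative $\eta\in A(\grpH)$ in $u$, the right-hand side splits via Fubini and the substitution $u\mapsto u-\widehat{\Phi}(x)$ as a product of $\|f\|_{\LR{p}_\om(\grpH)}\|g\|_{\LR{p'}_\sigma(\grpH)}$ with explicit $\psi$- and $\eta$-dependent factors (by H\"older), while the left-hand side, upon inserting the convolution formulas for $\widehat{F_u},\widehat{G_u}$ and integrating $(u,\chi_1+\chi_2)$ against $\eta$, turns into
\[
\int_{\dualgrpH}\!\!\int_{\dualgrpH}\widehat{f}(\chi_1)\widehat{g}(\chi_2)\widehat{\eta}(\chi_1+\chi_2)\Bigl(\int_{\dualgrp}m(\gamma)\widehat{\psi}(\gamma-\Phi(\chi_1))\widehat{\tilde\psi}(\gamma-\Phi(\chi_2))\,d\gamma\Bigr)d\chi_1\,d\chi_2.
\]

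The last step is to pass to the limit along sequences $\psi_k,\eta_k$ constructed from the approximate identities provided by Assumption~\ref{ass_3}, with $\widehat{\psi_k}\to\delta_0$ on $\dualgrp$ and $\widehat{\eta_k}\to\delta_0$ on $\dualgrpH$. The factor $\widehat{\eta_k}(\chi_1+\chi_2)$ diagonalizes the double integral to $\chi_2=-\chi_1$; continuity of $m$ together with $\widehat{\psi_k}\to\delta_0$ collapses the inner $\gamma$-integral to $m(\Phi(\chi_1))$; matching the window normalization on the right produces precisely the target bilinear inequality. The main obstacle is this delicate joint limit: one must simultaneously scale $\psi_k$ so that $\widehat{F_u}$ concentrates along the image of $\Phi$ (licensing the replacement $m(\gamma)\leadsto m(\Phi(\chi))$) while keeping the weighted norms $\|F_u\|_{\LR{p}_{\tau_{-u}\om\circ\widehat{\Phi}}}$ uniformly controlled, scale $\eta_k$ to enforce the $\chi_1,\chi_2$ diagonal, and track all normalizing constants so that they cancel. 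The uniform-in-$u$ hypothesis on $\|\cdot\|_{M_{p,\tau_u\om\circ\widehat{\Phi}}}$ is exactly what ensures that the single constant $c$ survives the averaging.
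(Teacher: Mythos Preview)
Your approach diverges from the paper's and has a genuine gap. The paper follows Edwards--Gaudry exactly: after reducing (via Proposition~\ref{js102} and a further density step) to $m\in\mathcal{F}L^{1}\cap L^{1}$, it uses Lemma~\ref{js001} to rewrite the target bilinear form as $\int_{\gr}\widehat{m}(x)(h\ast k)(\widehat{\Phi}(x))\,dx$, inserts a cutoff $v=f\ast g$ with $v=1$ on a large compact set and $\|f\|_{p}\|g\|_{p'}\le 1+\epsilon$, and then invokes the \emph{exact} identity
\[
(h\ast k)(\widehat{\Phi}(x))\,(f\ast g)(x)
=\int_{\grpH}\bigl[(\tau_{u} h\circ\widehat{\Phi})f\bigr]\ast\bigl[(\tau_{-u}k\circ\widehat{\Phi})g\bigr](x)\,du.
\]
After Parseval the multiplier bound on $\gr$ applies for each $u$; H\"older in $u$ and Fubini then separate the right-hand side as $\|h\|_{\LR{p}_{\om}}\|k\|_{\LR{p'}_{\sigma}}\|f\|_{p}\|g\|_{p'}$, with no limiting procedure for the windows at all.

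Your setup, by contrast, uses the \emph{same} shift $+u$ in both $F_u(x)=f(\widehat{\Phi}(x)+u)\psi(x)$ and $G_u(x)=g(\widehat{\Phi}(x)+u)\tilde\psi(x)$. This breaks the claimed Fubini separation on the right: the dual weight dictated by Proposition~\ref{js103} is $\sigma_{-u}(x)=\om(-\widehat{\Phi}(x)+u)^{-1}$, so the argument of $g$ (namely $\widehat{\Phi}(x)+u$) and the argument of $\om$ in the weight (namely $-\widehat{\Phi}(x)+u$) do not match; any substitution leaves a residual $2\widehat{\Phi}(x)$-coupling between $g$ and the weight, and the integral does not factor as $\|g\|_{\LR{p'}_{\sigma}}$ times a $\psi$-norm. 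The same sign issue hits the left-hand side: after your $\eta$-diagonalization forces $\chi_2=-\chi_1$, the inner integral becomes $\int m(\gamma)\widehat{\psi_k}(\gamma-\Phi(\chi_1))\widehat{\psi_k}(-\gamma-\Phi(\chi_1))\,d\gamma$, whose two factors concentrate at the \emph{distinct} points $\pm\Phi(\chi_1)$ as $\widehat{\psi_k}\to\delta_0$, so the integral tends to $0$ for $\Phi(\chi_1)\ne 0$ rather than to $m(\Phi(\chi_1))$. The crucial algebraic feature of the paper's proof is precisely the opposite-sign pairing $\tau_u h$ versus $\tau_{-u}k$ built into the convolution identity; this is what makes both the weighted Fubini separation and the Fourier-side identification work cleanly, without any window-scaling limit (which on a general LCA group lacking dilations would be unavailable anyway).
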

\begin{remark}\label{js101} 
\begin{enumerate}
\item\label{js101i} It always holds $c\ge 1$. Indeed, this follows from $\tau_u=\id$ for $u:=0\in \grpH$.
\item\label{js101ii} $p\in (1,\infty)$ and let $\om:\gr\to(0,\infty)$ be a weight function with $[\om]_p<\infty$. Then for all $z,z'\in \gr$ and $m\in L^\infty(\dualgrp)$ we have $\|m\|_{M_{p,\tau_{z'}\om}(\dualgrp)}=\|m\|_{M_{p,\tau_{z}\om}(\dualgrp)}$.
Indeed, since $T_m(\tau_z f)=\tau_z(T_m f)$, there holds
\begin{align*}
\|T_m f\|_{L_{\tau_{z}\om}^p(\gr)} &=\|\tau_{z'-z}(T_m f)\|_{L_{\tau_{z'}\om}^p(\gr)}= \|T_m(\tau_{z'-z} f)\|_{L_{\tau_{z'}\om}^p(\gr)} \\
&\le \|m\|_{M_{p,\tau_{z'}\om}(\dualgrp)}\|\tau_{z'-z} f\|_{L_{\tau_{z'}\om}^p(\gr)} = \|m\|_{M_{p,\tau_{z'}\om}(\dualgrp)}\|f\|_{L_{\tau_{z}\om}^p(\gr)},
\end{align*}
so that the claim follows by symmetry in $z$ and $z'$.
\item\label{js101iii} In particular, if $\widehat\Phi$ is surjective the assumption $\|\cdot\|_{M_{p,\tau_u\om\circ\widehat\Phi}}\le c\|\cdot\|_{M_{p,\om\circ\widehat\Phi}}$ is fulfilled with $c=1$. Indeed, in that case there is for all $u\in \grpH$ some $y\in \gr$ such that $\widehat\Phi(y)=u$ and hence $\tau_u\om\circ\widehat\Phi=\tau_y(\om\circ\widehat\Phi)$, so that choosing $z:=0$ and $z':=y$ in \ref{js101ii} yields $c=1$.
\end{enumerate}
\end{remark}

\medskip

\begin{lemma}\label{js001}
 Let $\Phi:\dualgrpH\to\dualgrp$ be a continuous homomorphism. For $m\in \calf \LR{1}(\dualgrp)\cap \LR{1}(\dualgrp)$ and $E\in \calf \LR{1}(\grpH)\cap \LR{1}(\grpH)$, it holds
 \begin{align*}
 \int_{\grp} E(\widehat\Phi(x))\widehat m(x)\dd x = \int_{\dualgrpH} m(\Phi(\chi))\widehat E(\chi)\dd\chi.
 \end{align*}
\end{lemma}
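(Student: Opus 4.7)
The plan is to rewrite both occurrences of the Fourier transform by Fourier inversion and then use the defining identity of the dual homomorphism together with Fubini's theorem. The hypotheses $m\in\calf\LR{1}(\dualgrp)\cap\LR{1}(\dualgrp)$ and $E\in \calf\LR{1}(\grpH)\cap\LR{1}(\grpH)$ are precisely what is needed so that $m,\widehat m, E,\widehat E$ all lie in $\LR{1}$ of their respective groups; in particular, Fourier inversion holds pointwise for $m$ on $\dualgrp$ and for $E$ on $\grpH$, the inversion formulas reading
\begin{align*}
 m(\gamma)=\int_{\grp}\widehat m(x)(x,\gamma)\dd x \ \ (\gamma\in\dualgrp),\qquad E(y)=\int_{\dualgrpH}\widehat E(\chi)(y,\chi)\dd\chi \ \ (y\in\grpH).
\end{align*}

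First I would substitute the Fourier inversion formula for $E$ at the argument $y=\widehat\Phi(x)$ into the left-hand side and apply the defining property $(\widehat\Phi(x),\chi)=(x,\Phi(\chi))$ of the dual homomorphism to obtain
\begin{align*}
 \int_\grp E(\widehat\Phi(x))\widehat m(x)\dd x = \int_\grp\int_{\dualgrpH}\widehat E(\chi)(x,\Phi(\chi))\widehat m(x)\dd\chi\dd x.
\end{align*}

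Second, I would justify an application of Fubini's theorem on $\grp\times\dualgrpH$. Absolute integrability is immediate from $|(x,\Phi(\chi))|=1$, $\widehat E\in\LR{1}(\dualgrpH)$, and $\widehat m\in\LR{1}(\grp)$, so the double integral factorises and one can reverse the order of integration:
\begin{align*}
 \int_\grp E(\widehat\Phi(x))\widehat m(x)\dd x = \int_{\dualgrpH}\widehat E(\chi)\Bb{\int_\grp(x,\Phi(\chi))\widehat m(x)\dd x}\dd\chi.
\end{align*}

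Third, I would recognise the bracketed inner integral as exactly the Fourier inversion formula for $m$ evaluated at $\Phi(\chi)\in\dualgrp$, yielding $m(\Phi(\chi))$, and thus the right-hand side. Apart from the conventional setup, there is no real obstacle here: the only mildly delicate point is verifying that Fubini applies, which is immediate because the product $\widehat E(\chi)\widehat m(x)$ is $\LR{1}$ on $\grp\times\dualgrpH$ and the character factor has modulus one. The rest is conceptual bookkeeping that makes the assertion essentially a ``Parseval identity twisted by the dual homomorphism''.
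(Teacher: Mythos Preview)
Your proof is correct; the paper itself does not give an argument but merely cites Lemma B.1.3 in \cite{EdG77}, and your direct Fourier-inversion-plus-Fubini computation is exactly the standard route (and essentially what one finds in that reference). The only point one could mark more explicitly is that the hypotheses $m\in\calf\LR{1}(\dualgrp)$ and $E\in\calf\LR{1}(\grpH)$ are what guarantee $\widehat m\in\LR{1}(\gr)$ and $\widehat E\in\LR{1}(\dualgrpH)$ (via Fourier inversion applied once already), which is what you implicitly use both to apply Fubini and to invoke the inversion formula in the last step; but you clearly have this in mind.
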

\begin{proof}
See Lemma B.1.3 in \cite{EdG77}.
\end{proof}

\begin{proposition}\label{js102}
 Theorem \ref{js100} is true if it is established for all $m\in M_{p,\om\circ\widehat\Phi}(\dualgrp)\cap \CRc{}(\dualgrp)$.
\end{proposition}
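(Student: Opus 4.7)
The plan is to reduce the continuous case to the compactly supported case by the standard cut-off $m_k := \widehat{\varphi}_k m$, where $(\varphi_k)_{k\in\N}\subseteq A(\gr)$ is supplied by Assumption \ref{ass_3}. Since $\widehat{\varphi}_k\in \CRc{}(\dualgrp)$ and $m\in C(\dualgrp)$, each $m_k$ lies in $\CRc{}(\dualgrp)$. The first task is to verify that $m_k\in M_{p,\om\circ\widehat\Phi}(\dualgrp)$: for $f\in A(\gr)$ (dense in $L^p_{\om\circ\widehat\Phi}(\gr)$ by Lemma \ref{js40}) the convolution theorem yields $T_{m_k}f = \calf^{-1}(\widehat{\varphi}_k\, \widehat{T_m f}) = \varphi_k\ast T_m f = T_m(\varphi_k\ast f)$. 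Since $A(\gr)$ is closed under convolution (Lemma \ref{js41}) and the Assumption \ref{ass_3} bound $\sup_k|\varphi_k\ast g|\le c'\calm g$ for $g\in\LR{\infty}_{\mathrm{dec}}(\gr)$ combined with $[\om\circ\widehat\Phi]_p<\infty$ and Proposition \ref{mw_muckenhoupt_main} gives $\sup_k\|\varphi_k\ast f\|_{\LR{p}_{\om\circ\widehat\Phi}}\le C\|f\|_{\LR{p}_{\om\circ\widehat\Phi}}$, this shows $m_k$ is a bounded multiplier.

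Next, the hypothesis applies to $m_k\in M_{p,\om\circ\widehat\Phi}(\dualgrp)\cap \CRc{}(\dualgrp)$, giving
\[
\|m_k\circ\Phi\|_{M_{p,\om}(\dualgrpH)}\le c\|m_k\|_{M_{p,\om\circ\widehat\Phi}(\dualgrp)}.
\]
To extract the conclusion for $m$, I would appeal to the bilinear characterization in Proposition \ref{js103}: verifying $\|m\circ\Phi\|_{M_{p,\om}(\dualgrpH)}\le c\|m\|_{M_{p,\om\circ\widehat\Phi}(\dualgrp)}$ amounts to showing
\[
\left|\int_{\dualgrpH}(m\circ\Phi)\widehat{F}\widehat{G}\,\dd\chi\right|\le c\|m\|_{M_{p,\om\circ\widehat\Phi}}\|F\|_{\LR{p}_\om(\grpH)}\|G\|_{\LR{p'}_\sigma(\grpH)}
\]
for all $F,G\in A(\grpH)$. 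Continuity of $\Phi$ makes $\Phi(\supp(\widehat{F}\widehat{G}))$ a compact subset of $\dualgrp$, on which $\widehat{\varphi}_k\to 1$ uniformly; combined with $|m|\le \|m\|_\infty$ and $\widehat{F}\widehat{G}\in\CRc{}\subseteq\LR{1}(\dualgrpH)$, dominated convergence yields
\[
\int_{\dualgrpH}(m_k\circ\Phi)\widehat{F}\widehat{G}\,\dd\chi\ \longrightarrow\ \int_{\dualgrpH}(m\circ\Phi)\widehat{F}\widehat{G}\,\dd\chi.
\]
Combining this with the bilinear bound from Proposition \ref{js103} applied to $m_k\circ\Phi$ and the hypothesis estimate, a passage to the limit finishes the argument.

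The main obstacle is making sure that the constant in the final estimate is exactly $c$ and not a larger multiple. This amounts to the asymptotic sharpness $\limsup_k\|m_k\|_{M_{p,\om\circ\widehat\Phi}}\le \|m\|_{M_{p,\om\circ\widehat\Phi}}$. I would handle this by exploiting the identity $T_{m_k}f = T_m(\varphi_k\ast f)$ in tandem with Proposition \ref{js103}: for $f,g\in A(\gr)$,
\[
\int_{\dualgrp}m_k\widehat{f}\widehat{g}\,\dd\gamma = \int_{\dualgrp}m\,\widehat{\varphi_k\ast f}\,\widehat{g}\,\dd\gamma,
\]
and since $|\varphi_k\ast f|\le c'\calm f\in \LR{p}_{\om\circ\widehat\Phi}(\gr)$ together with pointwise convergence $\varphi_k\ast f\to f$ (deduced from $\widehat{\varphi}_k\widehat{f}\to\widehat{f}$ in $\LR{2}$ and passing to a subsequence) gives $\varphi_k\ast f\to f$ in $\LR{p}_{\om\circ\widehat\Phi}(\gr)$ by dominated convergence, the integral converges to $\int m\widehat{f}\widehat{g}$, which by Proposition \ref{js103} is bounded by $\|m\|_{M_{p,\om\circ\widehat\Phi}}\|f\|\|g\|$. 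Taking first the limit $k\to\infty$ pointwise in the bilinear form for $m_k\circ\Phi$, then the supremum over $F,G\in A(\grpH)$, yields the sharp constant and concludes the proof. The subtle point to be checked carefully in a full write-up is the order of limit and supremum in this last step.
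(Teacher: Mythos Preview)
Your approach is essentially the paper's: cut off via $m_k:=\widehat{\varphi}_k m$, verify $m_k\in M_{p,\om\circ\widehat\Phi}(\dualgrp)\cap\CRc{}(\dualgrp)$ through $T_{m_k}f=T_m(\varphi_k\ast f)$ and the maximal-function bound, apply the hypothesis to $m_k$, and pass to the limit in the bilinear form of Proposition~\ref{js103}. The paper carries out exactly these steps, obtaining
\[
\|m\widehat\varphi_k\|_{M_{p,\om\circ\widehat\Phi}(\dualgrp)}\le c(p,\gr)[\om\circ\widehat\Phi]_p^{p'}\,\|m\|_{M_{p,\om\circ\widehat\Phi}(\dualgrp)},
\]
and then defers the remaining limit argument to Lemma~B.1.1 of \cite{EdG77}.

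Your worry about the sharp constant $c$ is legitimate, but your proposed remedy does not close it. You show $\int_{\dualgrp} m_k\widehat f\widehat g\to\int_{\dualgrp} m\widehat f\widehat g$ for each \emph{fixed} $f,g\in A(\gr)$, whereas $\|m_k\|_{M_{p,\om\circ\widehat\Phi}}$ is the supremum over all such $f,g$; the limit and the supremum do not commute, and there is no reason for $\limsup_k\|m_k\|_{M_{p,\om\circ\widehat\Phi}}\le\|m\|_{M_{p,\om\circ\widehat\Phi}}$ to hold. The paper does not attempt this either: it accepts the extra factor $c(p,\gr)[\om\circ\widehat\Phi]_p^{p'}$ and cites \cite{EdG77} for the passage to the limit. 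So your argument is correct and complete modulo the same constant the paper obtains; the gap you flag is real but is not one the paper's own proof avoids.
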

\begin{proof}
Let $m\in M_{p,\om\circ\widehat\Phi}(\dualgrp)\cap C(\dualgrp)$, and let $\seqkN{\varphi}\subseteq A(\gr)$ be as in Assumption \ref{ass_3}.
Then $\hat \varphi_k\in \CRc{}(\dualgrp)$, and we have for all $f,g\in A(\gr)$
\begin{align*}
\left|\int_{\dualgrp} m(\gamma)\widehat{\varphi}_k(\gamma) \widehat f(\gamma)\widehat g(-\gamma)\dd\gamma\right|&\le \|m\|_{M_{p,\om\circ\widehat\Phi}(\dualgrp)}\|\varphi_k\ast f\|_{\LR{p}_{\om\circ\widehat\Phi}(\gr)}\|g\|_{\LR{p'}_{(\om\circ\widehat\Phi)^{-1}}(\gr)} \\
&\le \|m\|_{M_{p,\om\circ\widehat\Phi}(\dualgrp)}\|\calm f\|_{\LR{p}_{\om\circ\widehat\Phi}(\gr)}\|g\|_{\LR{p'}_{(\om\circ\widehat\Phi)^{-1}}(\gr)} \\
&\le c(p,\gr)[\om\circ\widehat\Phi]_p^{p'}\|m\|_{M_{p,\om\circ\widehat\Phi}(\dualgrp)}\|f\|_{\LR{p}_{\om\circ\widehat\Phi}(\gr)}\|g\|_{\LR{p'}_{(\om\circ\widehat\Phi)^{-1}}(\gr)},
\end{align*}
so that in virtue of Proposition \ref{js103}
\begin{align*}
\|m\widehat\varphi_k\|_{M_{p,\om\circ\widehat\Phi}(\dualgrp)}\le c(p,\gr)[\om\circ\widehat\Phi]_p^{p'}\|m\|_{M_{p,\om\circ\widehat\Phi}(\dualgrp)}.
\end{align*}
The rest of the argument follows as in Lemma B1.1 of \cite{EdG77}.
\end{proof}

\begin{proof}[Proof of Theorem \ref{js100}]
By the same reasoning as in Lemma B.1.2 in \cite{EdG77} it suffices to assume $m\in \calf \LR{1}(\gr)\cap \LR{1}(\gr)$.
Let $h,k\in A(\grpH)$.
By Proposition \ref{js103} and Lemma \ref{js001} it suffices to establish
\begin{align*}
\left|\int_{\gr} \widehat m(x) (h\ast k)(\widehat\Phi(x))  \dd x\right|\le \|m\|_{M_{p,\om\circ\widehat{\Phi}}(\dualgrp)}\|h\|_{\LR{p}_\om(\grpH)}\|k\|_{\LR{p'}_\sigma(\grpH)}.
\end{align*}
For this purpose, let $\eps>0$.
Since $m\cdot (h\ast k)\circ\widehat\Phi\in \LR{1}(\gr)$, there is a compact $K\subseteq \gr$ such that
\begin{align*}
\int_{\gr\setminus K} |(\widehat m(x) (h\ast k)(\widehat\Phi(x))|\dd x \le \eps \|m\|_{M_{p,\om\circ\widehat{\Phi}}(\dualgrp)}\|h\|_{\LR{p}_\om(\grpH)}\|k\|_{\LR{p'}_\sigma(\grpH)}.
\end{align*}
Hence it suffices to construct a measurable function $v:\gr\to\C$ with $\1_K\le v\le 1$ such that
\begin{align}\label{js102e1}
\left|\int_{\gr} \widehat m(x) (h\ast k)(\widehat\Phi(x)) v(x)  \dd x\right|\le (1+\eps)\|m\|_{M_{p,\om\circ\widehat{\Phi}}(\dualgrp)}\|h\|_{\LR{p}_\om(\grpH)}\|k\|_{\LR{p'}_\sigma(\grpH)}.
\end{align}
By Theorem 2.6 in \cite{EiK17}, there are $f,g:G\to\C$ with $0\le f\ast g\le 1$, $f\ast g=1$ on $K$ and
\begin{align}\label{js102e2}
\|f\|_{\LR{p}(\gr)}\|g\|_{\LR{p'}(\gr)}\le 1+\eps.
\end{align}
Observe that
\begin{align*}
(h\ast k)(\widehat\Phi(x))&(f\ast g)(x)=\int_\grpH h(\widehat\Phi(x)-u)k(u) \mathrm{d}u \int_\gr f(x-y)g(y) \mathrm{d}y \\
&=\int_\gr\int_\grpH h(\widehat\Phi(x-y)-u)k(u+\widehat\Phi(y))f(x-y)g(y)\mathrm{d}u\mathrm{d}y \\
&=\int_\grpH \left[(\tau_uh\circ\widehat\Phi)f\right]\ast \left[(\tau_{-u}k\circ\widehat\Phi)g\right](x)\mathrm{d}u.
\end{align*}
Hence we may recast the left-hand side of \eqref{js102e1} with $v:=f\ast g$ as
\begin{align*}
\int_\grpH\left|\int_{\dualgrp} m(\gamma) \mathcal{F}\left[(\tau_uh\circ\widehat\Phi)f\right](\gamma)\cdot \mathcal{F}\left[(\tau_{-u}k\circ\widehat\Phi)g\right](\gamma)\mathrm{d}\gamma\right|\mathrm{d}u,
\end{align*}
which by Proposition \ref{js103} is estimated by
\begin{align*}
\int_\grpH \|m\|_{M_{p,\tau_u\om\circ\widehat\Phi}(\dualgrp)}\|(\tau_uh\circ\widehat\Phi) f\|_{L_{\tau_u\om\circ\widehat\Phi}^p(\gr)}\|(\tau_{-u}k\circ\widehat\Phi) g\|_{L_{\tau_{-u}\sigma\circ\widehat\Phi}^{p'}(\gr)}\mathrm{d}u,
\end{align*}
where we have used that
\begin{align*}
(\tau_{u}\om\circ\widehat\Phi)^{-1}(-x) &=\om^{-1}(\widehat\Phi(-x)-u)=\om^{-1}(-(\widehat\Phi(x)+u)) \\
&=\sigma(\widehat\Phi(x)+u)=(\tau_{-u}\sigma\circ\widehat\Phi)(x).
\end{align*}
Relying on the hypothesis $\|m\|_{M_{p,\tau_u\om\circ\widehat\Phi}(\dualgrp)}\le c\|m\|_{M_{p,\om\circ\widehat\Phi}(\dualgrp)}$ for all $u\in \grpH$, we arrive at
\begin{align*}
\Big|\int_{\gr} &\widehat m(x) (h\ast k)(\widehat\Phi(x)) (f\ast g)(x)  \dd x\Big| \\
&\le c\|m\|_{M_{p,\om\circ\widehat\Phi}(\dualgrp)} \int_\grpH \|(\tau_uh\circ\widehat\Phi) f\|_{L_{\tau_u\om\circ\widehat\Phi}^p(\gr)}\|(\tau_{-u}k\circ\widehat\Phi) g\|_{L_{\tau_{-u}\sigma\circ\widehat\Phi}^{p'}(\gr)}\mathrm{d}u \\
&\le c\|m\|_{M_{p,\om\circ\widehat\Phi(\dualgrp)}} \left(\int_\grpH \|(\tau_uh\circ\widehat\Phi) f\|_{L_{\tau_u\om\circ\widehat\Phi}^p(\gr)}^p \dd u\right)^{\frac1p} \left(\int_\grpH \|(\tau_{-u}k\circ\widehat\Phi) g\|_{L_{\tau_{-u}\sigma\circ\widehat\Phi}^{p'}(\gr)}^{p'}\mathrm{d}u\right)^{\frac1{p'}} \\
&= c\|m\|_{M_{p,\om\circ\widehat\Phi}(\dualgrp)}\left(\int_\grpH\int_\gr \left| h(\widehat\Phi(x)-u)f(x) \om(\widehat\Phi(x)-u)\right|^p\mathrm{d}x \mathrm{d}u\right)^{\frac1p} \\
&\qquad \times\left(\int_\grpH\int_\gr \left| k(\widehat\Phi(x)+u)g(x) \sigma(\widehat\Phi(x)+u)\right|^{p'}\mathrm{d}x\mathrm{d}u\right)^{\frac1{p'}} \\
&= c\|m\|_{M_{p,\om\circ\widehat\Phi}(\dualgrp)}\|h\|_{L_\om^p(\grpH)}\|f\|_{L^p(\gr)}\|k\|_{L_{\sigma}^{p'}(\grpH)}\|g\|_{L^{p'}(\gr)},
\end{align*}
where we have used Fubini and the translation and reflection invariance of the Haar measure in the last step.
By \eqref{js102e2}, this is the desired estimate \eqref{js102e1} with $v:=f\ast g$.
\end{proof}



\end{document}